\newcommand{\e}{{\varepsilon}}
\newcommand{\xh}{ \hat{x}}
\newcommand{\yh}{\hat{y}}
\newcommand{\twoscale}[2]{#1 \stackrel{2}{\rightharpoonup} #2}
\newcommand{\xe}{\frac{x}{\e}}
\newcommand{\ki}[1]{\chi_{\mbox{\scriptsize #1}}}
\newcommand{\kie}[1]{\chi^{\e}_{\mbox{\scriptsize #1}}}
\renewcommand{\ss}[1]{\mbox{\scriptsize #1}}
\numberwithin{equation}{section}
\newtheorem{theorem}{Theorem}[section]
\newtheorem{lemma}[theorem]{Lemma}
\newtheorem{propo}[theorem]{Proposition}
\newtheorem{defn2}[theorem]{Definition}
\newtheorem{rem_no_numbers}[theorem]{Remark}
\title{Multiscale Analysis of Heterogeneous Media in the Peridynamic Formulation\thanks{The Authors acknowledge the support of:
Boeing Contract \# 207114,
AFOSR Grant FA 9550-05-1008, and
NSF Grant DMS-0406374}}
\author{Bacim Alali\thanks{Department of Mathematics,
University of Utah,
	Salt Lake City, UT,({\tt alali@math.utah.edu}).}
\and Robert Lipton\thanks{Department of 	Mathematics,
        Louisiana State University,
        Baton Rouge, LA 70803,
        ({\tt lipton@math.lsu.edu}).}}
\date{}
\begin{document}

\maketitle

\begin{abstract}
A methodology is presented for investigating the dynamics of heterogeneous  media
using the nonlocal continuum model given by the peridynamic formulation. The approach presented here provides the ability to model the macroscopic dynamics while at the same time resolving the dynamics at the length scales of the microstructure. Central to the methodology is  a novel two-scale evolution equation. The rescaled solution of this equation is shown to provide a strong approximation to the actual deformation inside the peridynamic material. The two scale evolution can  be split into a microscopic component tracking the dynamics at the length scale of the heterogeneities and  a macroscopic component tracking the volume averaged (homogenized) dynamics. The interplay between the microscopic and macroscopic dynamics is given by a coupled system of evolution equations. The  equations show that the forces generated by the homogenized deformation inside the medium are  related to the homogenized  deformation through a history dependent constitutive relation. 
\end{abstract}


\section{Introduction}
\label{sec_into_perid}

The peridynamic formulation introduced in Silling \cite{Silling39} is a non-local continuum theory for deformable bodies.
Material particles interact through a pairwise force field that acts within a prescribed {\it horizon}. 
Interactions depend only on the difference in the displacement of material points and spatial derivatives in the displacement are avoided. 
This feature makes it an attractive model for the autonomous evolution of discontinuities in the displacement for
problems that involve cracks, interfaces, and other defects, see \cite{{Bobaru11},
{Bobaru12},{Gerstle22},{Silling40},{Silling42},{Silling43}}. 
Recent investigations aimed toward developing the numerical implementation, and application areas of the peridynamic model include \cite{Bhattacharya17}, \cite{Silling47}, \cite{Weckner50}, \cite{Weckner51}, \cite{Zimmerman53}. 
More mathematically related investigations  address issues related to the function space setting of peridynamics \cite{Emmrich21}, \cite{Du1} and the link between the linearized peridynamic formulation and the operators appearing in the Navier system of linear elasticity in the limit of vanishing non-locality \cite{Emmrich21}, \cite{Lehoucq1}. In this context the convergence of the solutions of the peridynamic equations to the solutions of the Navier system is demonstrated in \cite{Du1}. In other related work  the development of a non-local vector calculus with applications to non-local boundary value problems has been carried out in \cite{Lehoucq2}. Recent work on the  multi-scale applications of peridynamics have shown how the peridynamic equations formulated at mezo-scales can be recovered by a suitable upscaling of atomistic formulations, see \cite{Lehoucq3}. 

In this paper new tools are developed for the  analysis of heterogeneous peridynamic media involving two distinct length scales over which different types of peridynamic forces interact. 
The setting treated here involves a long range peridynamic force law perturbed in space by an oscillating short range peridynamic force. The oscillating short range force represents the presence of heterogeneities. It is also assumed that there is a sharp density variation associated with the heterogeneities. 
In this treatment we carry out the analysis in
the small deformation setting. For this case the reference and deformed configurations are taken to be the same
and both long and short range forces are given by linearizations of the peridynamic bond stretch model introduced in \cite{Silling39}.

The relative length scale over which the short range forces interact is denoted by $\e$ and points inside the domain
containing the heterogeneous material are specified by $x$. Here we will suppose the heterogeneities are periodically dispersed on the length scale $\e=\frac{1}{n}$ for some choice of $n=1,2,\ldots$ The deformation inside the medium is both a function of space and time $t$ and is written $u^\e(x,t)$.
The multi-scale analysis of the peridynamic formulation proceeds using the concept of two-scale convergence, introduced and developed by Nguetseng \cite{Nguetseng} and subsequently in  Allaire \cite{Allaire}, see also  E \cite{E}. The two-scale convergence  originally introduced in the context of partial differential equations  turns out to provide a natural  setting
for identifying both the coarse scale and fine scale dynamics inside peridynamic composites. The theory and application of the two-scale convergence is taken up in section three of this paper where a novel two-scale peridynamic equation is derived. The two-scale formulation is described by introducing a rescaled or microscopic variable $y=x/\e$. The solution of the two-scale dynamics is a deformation $u(x,y,t)$ that depends on both variables $x$ and $y$.

The rescaled solution $u(x,x/\e,t)$  is shown to provide a strong approximation to the actual deformation $u^\e(x,t)$ inside the peridynamic material. This is is shown in section \ref{ch_strongapproximation}
 where an evolution law for the error $e^\e(x,t)=u^\e(x,t)-u(x,x/\e,t)$ is developed. It is shown that $e^\e(x,t)$ vanishes in the $L^p$ norm, with respect to the spatial variables,  when the length scale of the oscillation tends to zero for all $p$ in the interval $1\leq p\leq\infty$.
The advantage of using the two-scale dynamics as a computational model is that it has the potential to lower computational costs associated with the explicit peridynamic modeling of millions of heterogeneities. This issue is discussed in section \ref{ch_strongapproximation}.

It is important for the modeling to recover the dynamics that can be measured by strain gages or other macroscopic measuring devices. Typical measured quantities involve averages of the deformation $u^\e(x,t)$ taken over a prescribed region $V$ with volume denoted by $|V|$. To this end we denote the unit period cell for the heterogeneities by $Y$ and project out the fluctuations by averaging over $y$ and write
\begin{eqnarray}
u^H(x,t)=\int_Y u(x,y,t)dy.
\label{avgerage}
\end{eqnarray}
In section four it is shown that 
\begin{eqnarray}
\lim_{\e\rightarrow 0}\frac{1}{|V|}\int_V u^\e(x,t)\,dx=\frac{1}{|V|}\int_V u^H(x,t)\,dx.
\label{volavg}
\end{eqnarray}
In this way we see that the average deformation is characterized by $u^H(x,t)$ when the scale $\e$  of the microstructure is small. We split the deformation into microscopic and macroscopic parts and write  $u(x,y,t)=u^H(x,t)+r(x,y,t)$. 
The interplay between the microscopic and macroscopic dynamics is given by a coupled system of evolution equations
for $u^H$ and $r$. The  equations  show that forces generated by the homogenized deformation inside the medium are  related to the homogenized  deformation through a history dependent constitutive relation. The explicit form of the constitutive relation is presented  in section four where we present a homogenized evolution equation for the coarse scale dynamics written exclusively in terms of $u^H$, see \eqref{id_u_H}.


\begin{figure}[t]
\centering
\scalebox{0.2}{\includegraphics{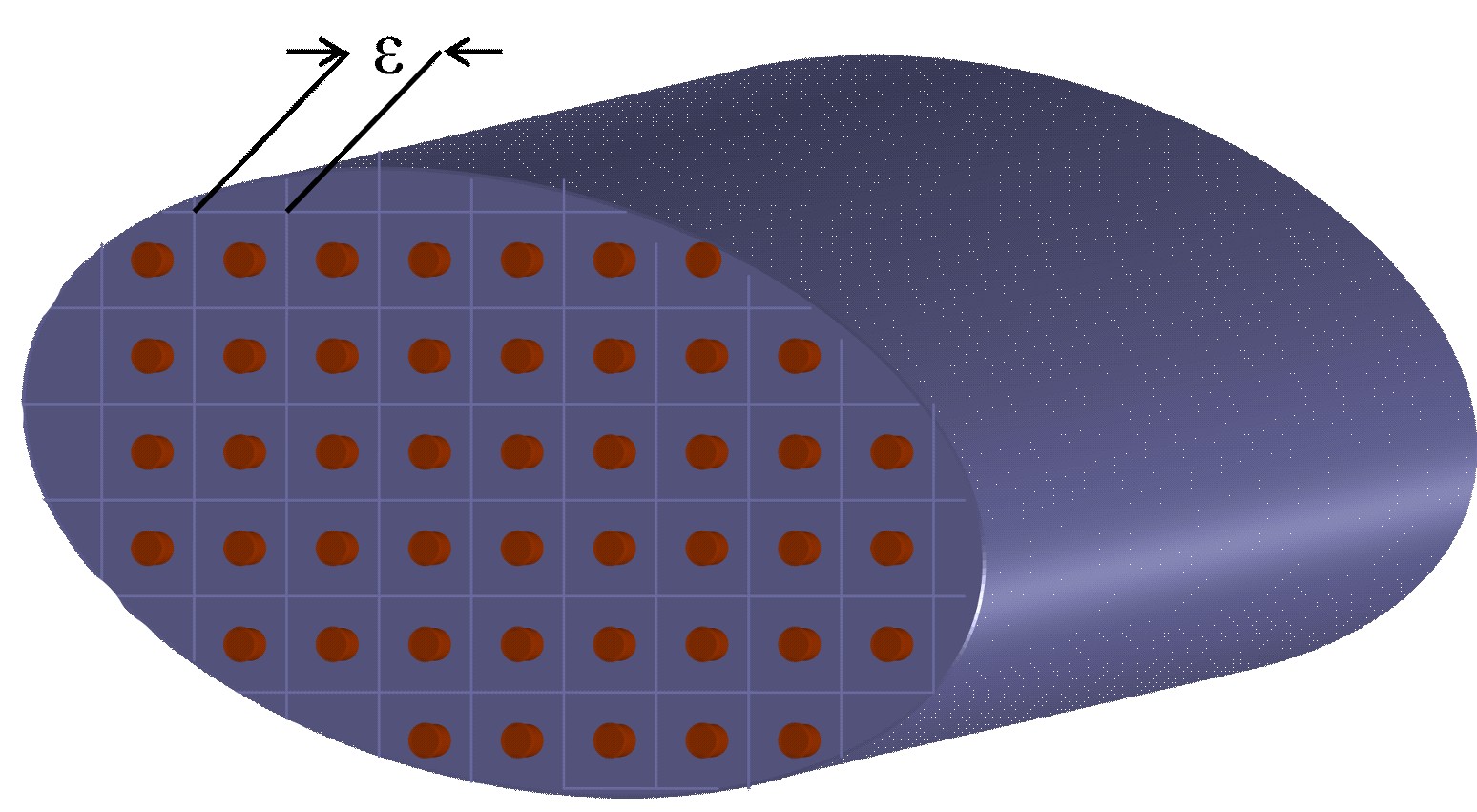}}
\caption{Fiber-reinforced composite.}
\label{Fi:fiber}
\end{figure}

\subsection{Peridynamic Formulation of Continuum Mechanics in Heterogeneous Media}
\label{sec_overview_perid}

We consider elastic deformations inside a body described by the bounded domain  $\Omega$.
In the peridynamic theory, the time evolution of the displacement vector field $u$, in a homogeneous body of constant density $\hat{\rho}$  is given by the partial
integro-differential equation
\begin{equation}
\label{peridynamicseq}
\hat{\rho}\;\partial_{t}^2 u(x,t) = \int_{H_\gamma(x)\cap\Omega} f(u(\xh,t)-u(x,t),\xh-x,x)\,d\xh
+ b(u,x,t),\,\,\,\,\,\, (x,t)\in\Omega\times(0,T)
\end{equation}
where $H_\gamma(x)$ is a neighborhood of $x$ of diameter $2\gamma$,
 $b$ is a prescribed loading force density field, and $\Omega$ is a bounded set in $\mathbb{R}^3$.
Here $f$ denotes the pairwise force field whose value is the force vector (per unit volume squared) that the particle at $\hat{x}$
exerts on the particle at $x$.  For a homogeneous medium $f$ is of the form $f(u(\xh,t)-u(x,t),\xh-x)$, i.e., it depends only on the
relative position of the two particles. We will often refer to $f$ as a {\it bond force}.
Only points $\hat{x}$ inside $H_\gamma(x)$ interact with $x$.
Equation (\ref{peridynamicseq}) is supplemented with initial conditions
 for $u(x,0)$ and $\partial_t u(x,0)$. For the purposes
 of discussion it will be convenient
to set
\[
\xi=\xh-x,
\]
which represents the relative position of these two particles in the reference configuration, and
\[
\eta=u(\xh,t)-u(x,t),
\]
which represents their relative displacement (see Figure \ref{Fi:horizon}). 
\begin{figure}[t]
\centering
\scalebox{0.4}{\includegraphics{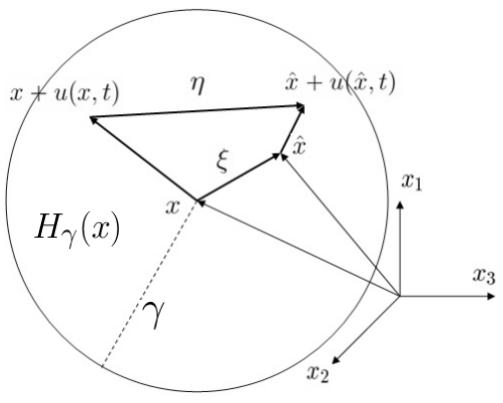}}
\caption{Deformation of a bond within the peridynamic horizon.}
\label{Fi:horizon}
\end{figure}
In this treatment, all elastic deformations are assumed small and the reference and deformed configurations are taken to be the same. 

We now introduce the heterogeneous peridynamic material. One can think of it as a  material  with long range peridynamic forces acting over a neighborhood of diameter $2\gamma$ perturbed by an oscillating  density fluctuation and oscillatory short range bond force acting over a much smaller neighborhood of diameter $2\e\delta$.
Both the long and short range pairwise elastic forces will be given by the linearized version of the {\it bond-stretch model} proposed in \cite{Silling43}. The long range force is given by
\begin{equation}
\nonumber
f_{\mbox{{\scriptsize  long}}}(\eta,\xi) =\left\{
\begin{array}{c l}
\displaystyle\lambda\frac{\xi\otimes\xi}{\vert\xi\vert^{3}} \eta, &  \vert\xi\vert\leq   \gamma    \\
0,   & \mbox{otherwise}.
\end{array}
\right.
\end{equation}
Here $\xi\otimes\xi$ is a rank one matrix with elements $(\xi\otimes\xi)_{ij}=\xi_i\xi_j$ and  $  \gamma  $ is the prescribed peridynamic horizon and $\lambda$ is a positive constant.

In this paper we assume that oscillations in the density and short range bond force are periodic. Here the oscillations are characterized by rescalings of a unit periodic peridynamic bond force and density. To describe these we introduce the unit period cube $Y\subset \mathbb{R}^3$ for the microstructure. The local coordinates inside $Y$ are denoted by $y$ with
the origin at the center of the unit cube. The unit cube is composed  of two or more peridynamic materials with different densities. To fix ideas one can consider reinforced composites made up of an inclusion phase such as a particle or fiber and a second host phase that surrounds the particle or fiber. A fiber reinforced material is portrayed in Figure \ref{Fi:Y}. The presence of material heterogeneity is reflected by the appearance of peridynamic forces acting within the length scale of the period. Let
$\chi_{\ss{f}}$ denote
the indicator function of the set occupied by the inclusion material and $\chi_{\ss{m}}$ denote the the indicator function
of the set occupied by the host or matrix material. Here $\chi_{\ss{f}}$ is given by
\begin{equation}
\nonumber
\chi_{{\scriptsize\mbox{f}}}(y) =\left\{
\begin{array}{c l}
1, & $y$ \mbox{ is in the inclusion phase},   \\
0,   & \mbox{otherwise,}
\end{array}
\right.
\end{equation}
and $\chi_{\ss{m}}$ is given by
\[
\chi_{\ss{m}}(y)=1-\chi_{\ss{f}}(y).
\]
We extend the functions $\chi_{\ss{f}}$ and $\chi_{\ss{m}}$ to $\mathbb{R}^3$ by periodicity.
For future reference, we denote by $\theta_{\ss{f}}$ and $\theta_{\ss{m}}$ the volume fractions of the included material and the matrix
material, respectively. Here $\theta_{\ss{f}}=\int_Y \chi_{\ss{f}}(y) dy$ and $\theta_{\ss{m}}=1-\theta_{\ss{f}}$. 
The density of the matrix material inside the unit period cell is given by the unperturbed density $\rho_{\ss{m}}=\hat{\rho}$ and that of the inclusion is given by $0<\rho_{\ss{f}}=\hat{\rho}+\Delta\rho$ where $\Delta\rho$ can be a positive or negative constant. The density characterizing the heterogeneous medium is  
\begin{eqnarray}
\rho(y)=\chi_{\ss{f}}(y)\rho_{\ss{f}}+\chi_{\ss{m}}(y)\rho_{\ss{m}}.
\label{denstyfluct}
\end{eqnarray}
The short-range pairwise force  is characterized by a bond strength $\alpha_\delta$ associated with a horizon $\delta>0$. The peridynamic horizon $\delta$ is chosen to be smaller than the spacing separating the inclusions. In addition the inclusions are assumed to be sufficiently smooth so that the points $y$ and $\yh$
are separated by at most one interface when $|y-\yh|<\delta$.
For any two points $y$ and $\hat{y}$ in $\mathbb{R}^3$  $\alpha_\delta$ is given by 
\begin{equation}
\nonumber
\label{alpha_short}
\displaystyle \alpha_\delta(y,\yh) =\left\{
\begin{array}{c l}
C_{\mbox{{\scriptsize f}}}, & \mbox{if $y$ and $\yh$ are in the same inclusion and $|y-\hat{y}|<\delta$}  \\
C_{\mbox{{\scriptsize m}}},  & \mbox{if $y$ and $\yh$ are in the matrix phase and $|y-\hat{y}|<\delta$}\\
C_{\mbox{{\scriptsize i}}},   & \mbox{if $y$ and $\yh$ are separated by an interface and $|y-\hat{y}|<\delta$}\\
0,&\mbox{if $|y-\hat{y}|\geq\delta$}.
\end{array}
\right.
\end{equation}
The
material parameters $C_{\ss{f}}$ and $C_{\ss{m}}$ are intrinsic to each phase and can  be
determined through experiments. Bonds connecting particles in the different materials are characterized by
$C_{\ss{i}}$, which can be chosen such that
 $C_{\ss{f}}> C_{\ss{i}}> C_{\ss{m}}>0$, see \cite{Silling43}.
Mathematically we express the bond strength as 
\begin{eqnarray}
\alpha_\delta(y,\yh)=\chi_\delta(y-\yh)\alpha(y,\yh),
\label{microhoriz}
\end{eqnarray}
where $\chi_\delta(z)=1$ for $|z|<\delta$ and $\chi_\delta(z)=0$ for $|z|\geq\delta$ and $\alpha$ is given by
\begin{equation}
\label{alpha_short_1}
\alpha(y,\yh) = C_{\ss{f}}\; \ki{f}(y)\ki{f}(\yh) +
C_{\ss{m}}\; \ki{m}(y)\ki{m}(\yh) +
C_{\ss{i}}\left(\ki{f}(y)\ki{m}(\yh)+
\ki{m}(y)\ki{f}(\yh)\right).
\end{equation}

The short-range peridynamic force defined on $\Omega$  is given by
\begin{eqnarray}
\label{f_short}
f_{\ss{short}}^{\e}(\eta,\xi,x) =\frac{1}{\e^2}\,\alpha_{\e\delta}\left(\xe,\frac{x+\xi}{\e}\right) \;\frac{\xi\otimes\xi}{\vert\xi\vert^{3}} \eta.
\end{eqnarray}
\begin{figure}[t]
\centering
\scalebox{0.2}{\includegraphics{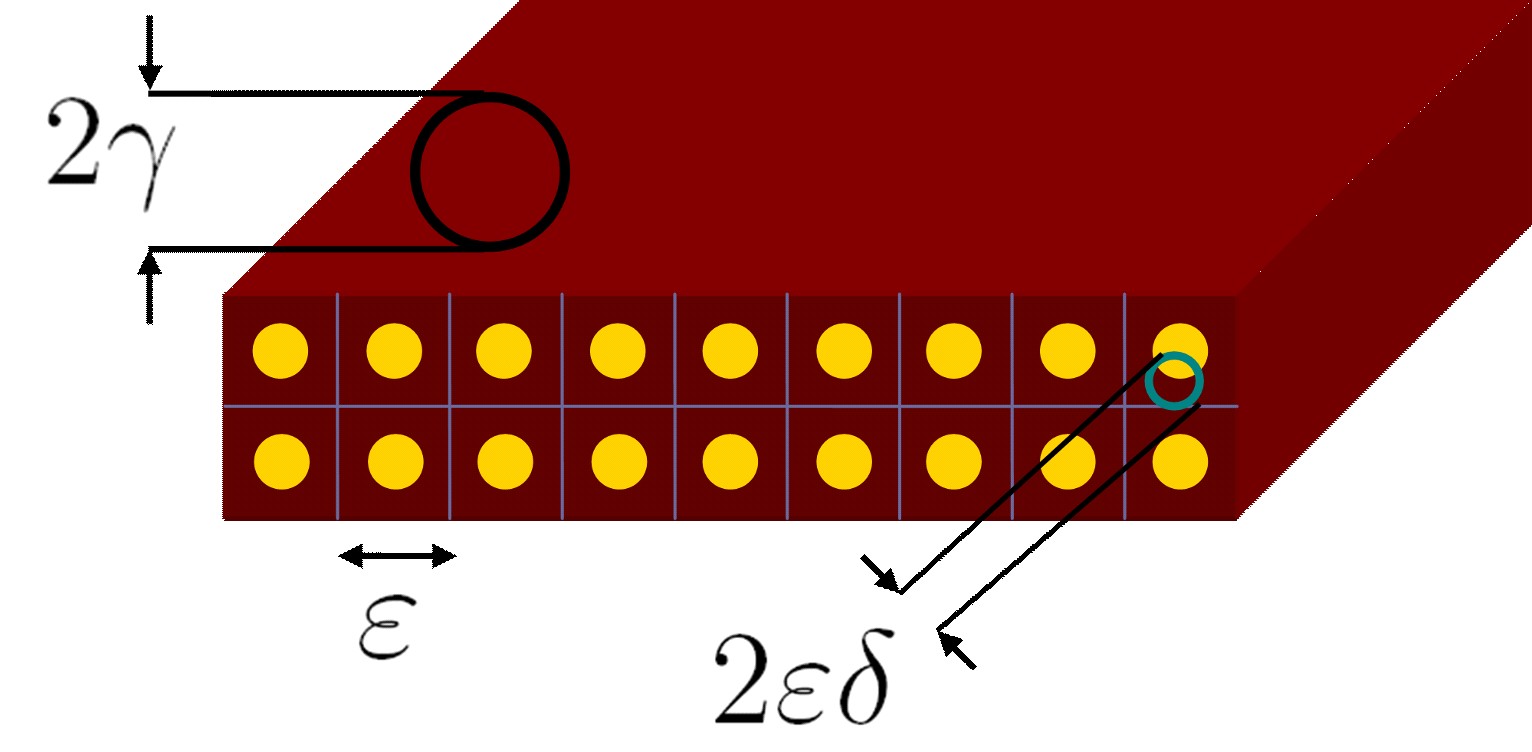}}
\caption{Long-range bonds (horizon $\gamma$) and  short-range bonds (horizon $\e \delta$).}
\label{Fi:LongPlusShort}
\end{figure}
For future reference we see from  (\ref{microhoriz}) and (\ref{alpha_short_1}) that $\alpha_{\e\delta}(\xe,\frac{\xh}{\e})$ is given by
\begin{equation}
\label{alpha_short_e}
\alpha_{\e\delta}\left(\frac{x}{\e},\frac{\xh}{\e}\right) = \chi_{\e\delta}(x-\xh)\left(C_{\ss{f}}\; \kie{f}(x)\kie{f}(\xh) +
C_{\ss{m}}\; \kie{m}(x)\kie{m}(\xh) +
C_{\ss{i}}\left(\kie{f}(x)\kie{m}(\xh)+
\kie{m}(x)\kie{f}(\xh)\right)\right),
\end{equation}
where $\kie{f}(x):=\ki{f}(\xe)$ and $\kie{m}(x):=\ki{m}(\xe)$. The oscillating density $\rho_\e$ for the heterogeneous medium is given by
$\rho_\e(x)=\rho(\frac{x}{\e})$.

The elastic displacement inside the heterogeneous body $\Omega$ is denoted by $u^\e(x,t)$ and 
the peridynamic equation of motion  for the heterogeneous medium is given by
\begin{equation}
\label{peridynamic1}
\renewcommand{\arraystretch}{1}
\begin{array}{lcl}
\rho_\e(x)\partial_{t}^2 u^{\e}(x,t) &=& \displaystyle\int_{H_  {\gamma}(x)\cap\Omega} f_{\ss{long}}(u^{\e}(\xh,t)-u^{\e}(x,t),\xi)\,d\xh \\
&&\\
& &+ \displaystyle\int_{H_  {\e \delta  }(x)\cap\Omega} f^\e_{\ss{short}}((u^{\e}(\xh,t)-u^{\e}(x,t)),\xi,x)\,d\xh \\
&&\\
& &+\displaystyle \,b^\e\left(x,t\right),\hbox{  for $x$ in $\Omega$.}
\end{array}
\end{equation}
The peridynamic equation is supplemented with initial conditions
\begin{eqnarray}
\label{peridynamic1ic1}
u^{\e}(x,0) &=&u_0^\e(x)\\
\label{peridynamic1ic2}
\partial_t u^{\e}(x,0)&=&v_0^\e(x).
\end{eqnarray}
Here the body force $b^\e(x,t)$ and initial conditions $u_0^\e(x)$, $v_0^\e(x)$ can depend upon $\e$. When these functions are bounded in $L^p(\Omega)^3$ for $p\geq 1$ it follows from the theory of semigroups that
there is a classic solution $u^\e(x,t)$ belonging to $C^2([0,T];\,L^p(\Omega)^3)$. This is discussed in the following section, see Remark \ref{L2initialandbodyforce}.

In what follows we will develop strong approximations for solutions $u^\e$ when the prescribed body forces and initial conditions are continuous at the coarse length scale but possess discontinuous oscillations over fine length scales. For this choice we look for a solution $u^\e(x,t)$ continuous in time but possibly discontinuous in the spacial variables and belonging to the Lebesgue space $L^p(\Omega)^3$ for $1\leq p<\infty$. In this paper we show that we can find  solutions $u^\e(x,t)$ and strong approximations of the form $u(x,x/\e,t)$ that both belong to $C^2([0,T];L^p(\Omega)^3)$, for a wide class of initial conditions and body forces. In order to describe this class of initial conditions and body forces we consider the space  $L^p_{\ss{per}}(Y;C(\overline{\Omega})^3)$ of functions $\psi(x,y)$ measurable with respect to $y$, $L^p$-integrable on $Y$ and $Y$-periodic in $y$, with values in the Banach space $C(\overline{\Omega})^3$ of continuous vector fields on $\overline{\Omega}$. 
Every element $\psi(x,y)$ of this space is a Caratheodory  function and hence  $\psi(x,\frac{x}{\e})$ is measurable on $\Omega$ and belongs to $L^p(\Omega)$. This kind of function space is well known in the context of two-scale convergence see,  \cite{Allaire}, and \cite{Nguetseng2}.
In what follows we will suppose $b(x,y,t)$ belongs to $C([0,T];\,L^p_{\ss{per}}(Y;C(\overline{\Omega})^3)$ and both
$u_0(x,y)$ and $v_0(x,y)$  belong to  $L^p_{\ss{per}}(Y;C(\overline{\Omega})^3)$. For this choice the initial conditions and body forces are given by $u^\e(x,0)=u_0(x,\frac{x}{\e})$, $\partial_t u^{\e}(x,0)=v_0(x,\frac{x}{\e})$, and  $b^\e(x,t)=b(x,\frac{x}{\e},t)$. The construction of a strong approximation for this  class of data is given in Theorem \ref{strongapproximation} of section \ref{ch_strongapproximation}.

It is important at this stage to point out that it is precisely the $\e^{-2}$ scaling of the bond force together with the scaling $\e\delta$ of the horizon that ultimately delivers the macroscopic equations
for $u^H$ given by \eqref{id_u_H}. In this context we expect other types of macroscopic equations to arise for different scalings of the bond force strength. Recent work for homogeneous media show that the classical equations of linear elasticity arise for bond force scaling on the order of $\e^{-4}$ and horizons with scaling $\e$, see 
\cite{Emmrich21}, \cite{Lehoucq1}, and \cite{Du1}.

When the initial conditions and body force are continuous functions and the density $\rho^\e$ and bond forces characterized by $\alpha(\xe,\frac{\xh}{\e})$ are also continuous then the solution $u^\e$ is continuous in space and belongs to $C^2([0,T];\,C(\overline{\Omega})^3)$; this is discussed in the next section. 

In forthcoming work we will focus on the development of strong approximations for initial conditions that are discontinuous with respect to coarse length scales. This will be carried out for heterogeneous peridynamic media characterized by oscillatory but continuous densities and bond forces. More generally one could contemplate strong approximations for more general combinations of bond forces and initial data.

 \begin{figure}[t]
\centering
\scalebox{0.26}{\includegraphics{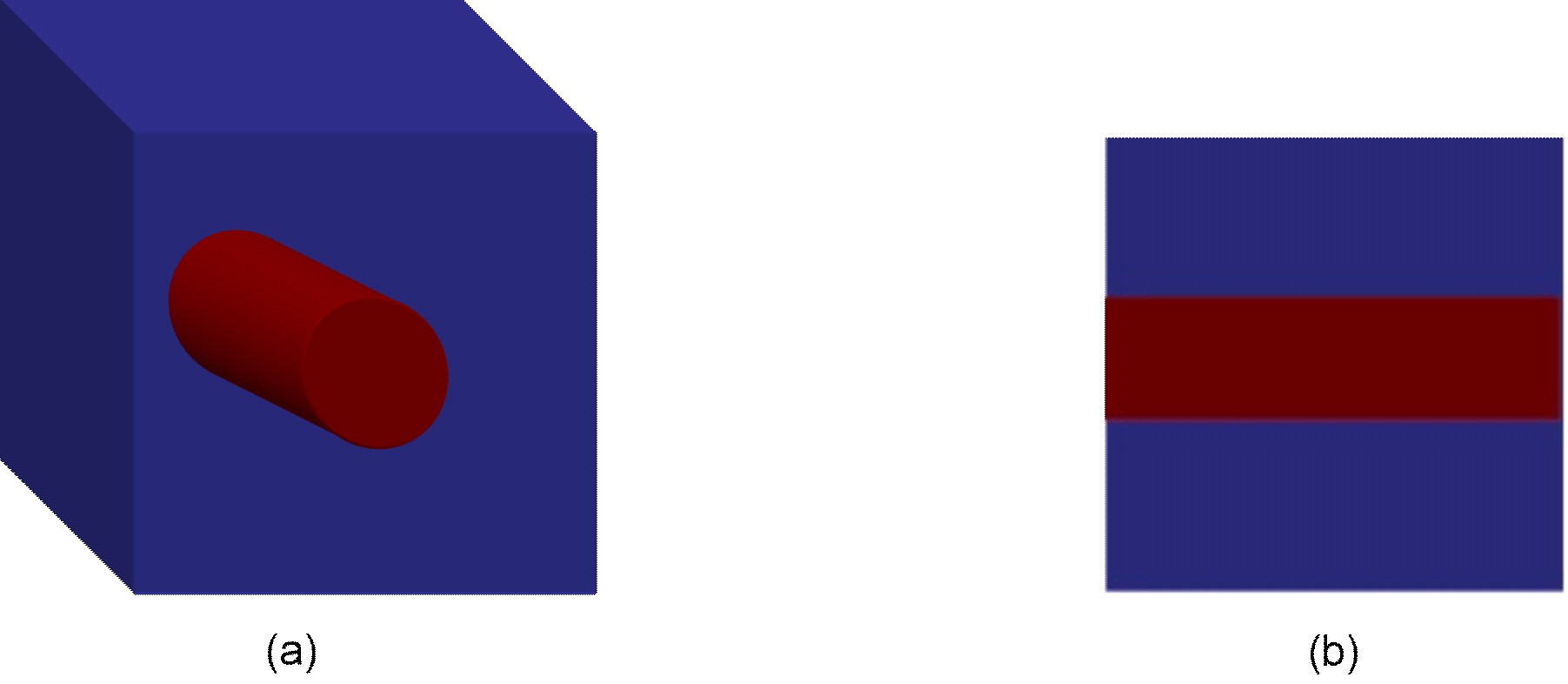}}
\caption{(a) Composite cube $Y$. (b) Cross-section of $Y$ along the fiber direction.}
\label{Fi:Y}
\end{figure}

\section{Peridynamic Formulation for Heterogeneous Media: A Well Posed Problem}
\label{ch_exist_and_uniq}
In this section, we make use of the semigroup theory of operators to show the existence and uniqueness of
solutions to (\ref{peridynamic1})-(\ref{peridynamic1ic2}).
For $v\in L^p(\Omega)^3$, with $1\leq p<\infty$, let
\begin{eqnarray}
\label{A_L1_0}
A^\e_{L,1}v(x)&=&\rho_\e^{-1}(x)\int_{H_{\gamma}(x)\cap\Omega} \lambda
\frac{(\xh-x)\otimes(\xh-x)}{\vert\xh-x\vert^{3}}\; v(\xh)\,d\xh,\\
\label{A_L2_0}
A^\e_{L,2}v(x)&=&\rho_\e^{-1}(x)\int_{H_{\gamma}(x)\cap\Omega} \lambda
\frac{(\xh-x)\otimes(\xh-x)}{\vert\xh-x\vert^{3}}\,d\xh\;v(x),\\
\label{Ae_S1_0}
A^{\e}_{S,1}v(x)&=& \rho_\e^{-1}(x)\int_{H_  {\e  \delta    }(x)\cap\Omega} \frac{1}{\e^{2}}\,\alpha\left(\xe,\frac{\xh}{\e}\right)
\frac{(\xh-x)\otimes(\xh-x)}{\vert\xh-x\vert^{3}}\; v(\xh)\,d\xh,\\ 
\label{Ae_S2_0}
A^{\e}_{S,2}v(x)&=& \rho_\e^{-1}(x)\int_{H_  {\e  \delta    }(x)\cap\Omega} \frac{1}{\e^{2}}\,\alpha\left(\xe,\frac{\xh}{\e}\right)
\frac{(\xh-x)\otimes(\xh-x)}{\vert\xh-x\vert^{3}}\,d\xh\;v(x).
\end{eqnarray}
Also we set
\begin{eqnarray}
\label{A_L}
A^\e_{L}&=&A^\e_{L,1}-A^\e_{L,2},\\
A^{\e}_{S}&=&A^{\e}_{S,1}-A^{\e}_{S,2},\\ 
\label{Ae}
A^{\e}&=&A^\e_{L} + A^{\e}_{S}.
\end{eqnarray}
Then by making the  identifications $u^{\e}(t)=u^{\e}(\cdot,t)$ and $b^{\e}(t)=b(\cdot,t)$, we can
write (\ref{peridynamic1})-(\ref{peridynamic1ic2}) as an operator equation in $L^p(\Omega)^3$
\begin{equation}
\label{ue_operator_eq}
\displaystyle \left\{
\begin{array}{ccl}
\ddot{u}^{\e}(t) &=& A^{\e}u^{\e}(t)+\rho_\e^{-1}b^{\e}(t),\;\;\;\;\; t\in[ 0,T ]\\
u^{\e}(0)&=& u^{\e}_0,\\
\dot{u}^{\e}(0)&=& v^{\e}_0.
\end{array}
\right.
\end{equation}
or equivalently,  as an inhomogeneous Abstract Cauchy Problem in
 \\$L^p(\Omega)^3\times L^p(\Omega)^3$
\begin{equation}
\label{ue_ACP}
\displaystyle \left\{
\begin{array}{ccl}
\dot{U}^{\e}(t) &=& \mathbb{A}^{\e}U^{\e}(t)+B^{\e}(t),\;\;\;\;\; t\in[ 0,T ]\\
U^{\e}(0)&=& U^{\e}_0.\\
\end{array}
\right.
\end{equation}
where
\begin{eqnarray*}
U^{\e}(t)=\left(%
\begin{array}{c}
  u^{\e}(t) \\
  \dot{u}^{\e}(t)\\
\end{array}%
\right),\,\,
U^{\e}_0=\left(%
\begin{array}{c}
  u^{\e}_0 \\
  v^{\e}_0\\
\end{array}%
\right),\,\,
B^{\e}(t)=\left(%
\begin{array}{c}
  0 \\
 \rho_\e^{-1} b^{\e}(t) \\
\end{array}%
\right), \mbox{ and }
\mathbb{A}^{\e} = \left(%
\begin{array}{cc}
  0 & I \\
  A^{\e} & 0 \\
\end{array}%
\right).
\end{eqnarray*}
Here $I$ denotes the identity map in $L^p(\Omega)^3$.
\begin{propo}
\label{existuniq_prop}
Let $1\leq p<\infty$ and assume that $b\in  C([0,T];\,L^p_{\ss{per}}(Y;C(\overline{\Omega})^3))$ and $U_0\in L^p_{\ss{per}}(Y;C(\overline{\Omega})^3)\times L^p_{\ss{per}}(Y;C(\overline{\Omega})^3)$. Then
\begin{enumerate}
\item[(a)] The operators $A^{\e}$ and $\mathbb{A}^{\e}$ are  linear and bounded on $L^p(\Omega)^3$ and
$L^p(\Omega)^3\times L^p(\Omega)^3$, respectively. Moreover, the bounds are uniform in $\e$.

\item[(b)] Equation (\ref{ue_ACP}) has a unique classical solution $U^{\e}$ in
$C^1([0,T];\,L^p(\Omega)^3\times L^p(\Omega)^3)$ which is given by
\begin{equation}
\label{Ue_explicit}
U^{\e}(t) = e^{t \mathbb{A}^{\e}}U^{\e}_0 + \int_0^t e^{(t-\tau) \mathbb{A}^{\e}}B^{\e}(\tau)\,d\tau, \,\,\, t\in [0,T],
\end{equation}
where
\begin{equation}
\label{semigroup}
e^{t \mathbb{A}^{\e}} = \sum_{n=0}^{\infty} \frac{t^n}{n!}\; (\mathbb{A}^{\e})^n.
\end{equation}
Moreover, equation (\ref{ue_operator_eq}) has a unique classical solution $u^{\e}\in C^2([0,T];\,L^p(\Omega)^3)$ which is given by
\begin{subequations}
\label{ue_explicit}
\begin{eqnarray}
\nonumber
u^{\e}(t) &=& \cosh{\left(t\sqrt{A^{\e}}\right)} u^{\e}_0 + \sqrt{A^{\e}}^{\;-1}\sinh{\left(t\sqrt{A^{\e}}\right)} v^{\e}_0\\
& &+ \sqrt{A^{\e}}^{\;-1}\!\int_0^t \sinh{\left((t-\tau)\sqrt{A^{\e}}\right)}b^{\e}(\tau)\,d\tau
\end{eqnarray}
with the notation
\begin{eqnarray}
\cosh{\left(t\sqrt{A^{\e}}\right)}  :=  \sum_{n=0}^{\infty} \frac{t^{2n}}{(2n)!}\; (A^{\e})^n\\
\sqrt{A^{\e}}^{\;-1}\sinh{\left(t\sqrt{A^{\e}}\right)} := \sum_{n=0}^{\infty} \frac{t^{2n+1}}{(2n+1)!}\; (A^{\e})^n
\end{eqnarray}
\end{subequations}

\item[(c)] The   sequences $(u^{\e})_{\e>0}$, $(\dot{u}^{\e})_{\e>0}$, and $(\ddot{u}^{\e})_{\e>0}$ are
 bounded in $L^{\infty}([0,T];\,L^p(\Omega)^3)$.
\label{existuniq_prop_c}
\end{enumerate}
\end{propo}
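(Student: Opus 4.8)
The plan is to prove (a) by direct kernel estimates, deduce (b) from the classical theory of uniformly continuous semigroups generated by bounded operators together with the Duhamel formula, and then read off (c) from the solution representation and the $\e$-uniform bounds established in (a).

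For (a), I would bound each of the four operators $A^\e_{L,1},A^\e_{L,2},A^\e_{S,1},A^\e_{S,2}$ on $L^p(\Omega)^3$ with norm independent of $\e$. The multiplier $\rho_\e^{-1}$ satisfies $\|\rho_\e^{-1}\|_{L^\infty}\le\max\{\rho_{\ss f}^{-1},\rho_{\ss m}^{-1}\}$ uniformly, $|\alpha|\le C:=\max\{C_{\ss f},C_{\ss m},C_{\ss i}\}$, and the rank-one matrix obeys $|(\xi\otimes\xi)/|\xi|^3|=1/|\xi|$, which is integrable near the origin in $\mathbb{R}^3$. For the long-range pieces the support is $|\xi|\le\gamma$ and $\int_{|\xi|\le\gamma}|\xi|^{-1}d\xi=2\pi\gamma^2<\infty$; this bounds the $L^\infty$ symbol of the multiplication operator $A^\e_{L,2}$, while for the integral operator $A^\e_{L,1}$ Schur's test applies, since the kernel is symmetric under $\xi\mapsto-\xi$ so both its row and column integrals are bounded by $2\pi\gamma^2\,\lambda\,\max\rho^{-1}$, giving an $L^p$ bound for every $1\le p<\infty$. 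For the short-range pieces the key point is the scaling balance: the prefactor $\e^{-2}$ is exactly absorbed by the shrinking horizon, $\e^{-2}\int_{|\xi|\le\e\delta}|\xi|^{-1}d\xi=\e^{-2}\cdot 2\pi(\e\delta)^2=2\pi\delta^2$, independent of $\e$; the same two arguments (Schur for $A^\e_{S,1}$, $L^\infty$-symbol bound for $A^\e_{S,2}$) then yield $\e$-uniform bounds. Hence $A^\e=A^\e_L+A^\e_S$ is $\e$-uniformly bounded, and equipping $L^p(\Omega)^3\times L^p(\Omega)^3$ with the sum norm gives $\|\mathbb{A}^\e(a,b)\|=\|b\|+\|A^\e a\|\le\max\{1,\|A^\e\|\}(\|a\|+\|b\|)$, so $\mathbb{A}^\e$ is $\e$-uniformly bounded as well.

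For (b), since $\mathbb{A}^\e$ is a bounded operator on a Banach space the exponential series \eqref{semigroup} converges in operator norm and defines a uniformly continuous group with $\|e^{t\mathbb{A}^\e}\|\le e^{|t|\|\mathbb{A}^\e\|}$. One checks $B^\e\in C([0,T];L^p(\Omega)^3\times L^p(\Omega)^3)$ and $U_0^\e$ in that space: the evaluation map $\psi(x,y)\mapsto\psi(x,x/\e)$ sends $L^p_{\ss per}(Y;C(\overline\Omega)^3)$ boundedly into $L^p(\Omega)^3$, uniformly in $\e\le 1$ (the Carath\'eodory property recalled in the Introduction, combined with periodic averaging of $\|\psi(x,\cdot)\|_{C(\overline\Omega)}$), so $b^\e(\cdot,t)=b(\cdot,\cdot/\e,t)$ inherits continuity in $t$ from $b\in C([0,T];L^p_{\ss per}(Y;C(\overline\Omega)^3))$ and multiplication by the bounded function $\rho_\e^{-1}$ preserves this; likewise $u_0^\e,v_0^\e\in L^p(\Omega)^3$. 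The standard variation-of-parameters theorem for bounded generators then gives the unique classical solution $U^\e\in C^1([0,T];L^p(\Omega)^3\times L^p(\Omega)^3)$ of \eqref{ue_ACP}, namely \eqref{Ue_explicit}. To pass to the second-order form I would exploit the block structure: $(\mathbb{A}^\e)^2=\mathrm{diag}(A^\e,A^\e)$, so $(\mathbb{A}^\e)^{2n}=\mathrm{diag}((A^\e)^n,(A^\e)^n)$ and $(\mathbb{A}^\e)^{2n+1}$ has off-diagonal blocks $(A^\e)^n$ and $(A^\e)^{n+1}$; splitting \eqref{semigroup} into even and odd powers identifies the top component of $U^\e(t)$ with the asserted formula \eqref{ue_explicit}, in which the series are honest entire functions of the bounded operator $A^\e$, so the symbols $\sqrt{A^\e}^{\,-1}\sinh(\cdot)$, etc.\ require neither a square root nor an inverse. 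Finally $u^\e\in C^2$ because $\ddot u^\e(t)=A^\e u^\e(t)+\rho_\e^{-1}b^\e(t)$ with $u^\e\in C^1$, $A^\e$ bounded, and $b^\e\in C$; uniqueness descends from uniqueness for \eqref{ue_ACP}.

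For (c), applying $\|\cdot\|$ to \eqref{Ue_explicit} and using $\|e^{s\mathbb{A}^\e}\|\le e^{sM}$ with $M:=\sup_\e\|\mathbb{A}^\e\|<\infty$, together with $\|U_0^\e\|\le C_0$ and $\sup_{[0,T]}\|B^\e(t)\|\le C_b$ uniformly in $\e$ (again from the uniform boundedness of the evaluation map and of $\rho_\e^{-1}$), gives $\sup_{[0,T]}\|U^\e(t)\|\le e^{MT}C_0+M^{-1}(e^{MT}-1)C_b$, independent of $\e$. Since this norm dominates $\|u^\e(t)\|_{L^p}+\|\dot u^\e(t)\|_{L^p}$, the sequences $(u^\e)$ and $(\dot u^\e)$ are bounded in $L^\infty([0,T];L^p(\Omega)^3)$, and then $\|\ddot u^\e(t)\|_{L^p}\le\|A^\e\|\,\|u^\e(t)\|_{L^p}+\|\rho_\e^{-1}\|_{L^\infty}\,\|b^\e(t)\|_{L^p}$ is $\e$-uniformly bounded as well. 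The only genuine subtlety is the uniform-in-$\e$ control of the short-range operators: naively the $\e^{-2}$ blows up, and the resolution is precisely that the peridynamic kernel has an integrable $|\xi|^{-1}$ singularity in three dimensions while the horizon $\e\delta$ shrinks fast enough to cancel the prefactor exactly. Everything downstream — the semigroup construction, the Duhamel representation, and the a priori bounds — is then routine.
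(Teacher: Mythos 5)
Your proposal is correct and follows essentially the same route as the paper: $\e$-uniform kernel estimates for the four operators exploiting the exact cancellation of the $\e^{-2}$ prefactor against the shrinking horizon $\e\delta$, the bounded-generator semigroup/Duhamel theory for \eqref{ue_ACP} with the even/odd splitting of the exponential series giving \eqref{ue_explicit}, uniform control of $u_0^\e$, $v_0^\e$, $b^\e$ in $L^p(\Omega)^3$ via the periodic-cell covering of $\|\cdot\|_{C(\overline\Omega)}$, and the a priori bounds of (c) read off from \eqref{Ue_explicit} and the equation itself. The only (immaterial) deviation is that in part (a) you invoke Schur's test for $A^\e_{L,1}$ and $A^\e_{S,1}$, whereas the paper rescales $\xh=x+\e z$ and applies Minkowski's integral inequality; both yield the same $\e$-independent bounds.
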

\begin{rem_no_numbers}
\label{L2initialandbodyforce}
The hypothesis of Proposition \ref{existuniq_prop} can be relaxed by assuming that the sequences of initial
conditions $\left(u_0^{\e}\right),\left(v_0^{\e}\right)$, are bounded 
in $L^p(\Omega)^3$ and $\left(b^{\e}(\cdot,t)\right)$ is uniformly bounded in $L^p(\Omega)^3$ for $t\in[0,T]$. This is proved following the same steps given in the proof of Proposition \ref{existuniq_prop} presented below.  
\end{rem_no_numbers}
\begin{proof}
Part (a). It is clear that the operators $A^{\e}_{S,1}$, $A^{\e}_{S,2}$, $A^{\e}_{L,1}$, and $A^{\e}_{L,2}$ are linear. So we begin the proof by showing that
$A^{\e}_{S,1}$ and $A^{\e}_{S,2}$ are uniformly bounded sequences of operators on $L^p(\Omega)^3$ for $1\leq p<\infty$. We introduce the indicator function $\chi_{\scriptscriptstyle{\Omega}}(x)$ taking the value one for $x$ inside $\Omega$ and zero for $x$ outside $\Omega$ and let $v$ denote a generic vector field belonging to $L^p(\Omega)^3$.
Then
by the change of variables $\xh=x+\e z$ in (\ref{Ae_S1_0}) we obtain
\begin{eqnarray}
\label{Ae_S1_1}
A^{\e}_{S,1}v(x)&=& \rho_\e^{-1}\int_{H_  {\delta    }(0)}\chi_{\scriptscriptstyle{\Omega}}(x+\e z) \alpha\left(\xe,\xe+z\right)
\frac{z\otimes z}{\vert z\vert^{3}}\; v(x+\e z)\,dz.
\end{eqnarray}
Applying Minkowski's inequality gives
\begin{eqnarray}
\Vert A^{\e}_{S,1}v(x)\Vert_{L^p(\Omega)^3}\leq\int_{H_  {\delta    }(0)}\left(\int_\Omega\chi_{\scriptscriptstyle{\Omega}}(x+\e z) \rho^{-1}(\frac{x}{\e})|\alpha\left(\xe,\xe+z\right)
\frac{z\otimes z}{\vert z\vert^{3}}\; v(x+\e z)|^p\,dx\right)^{1/p}\,dz.
\label{minkowski}
\end{eqnarray}
Let $\displaystyle\overline{\alpha}=\max_{y,y'\in Y}\rho^{-1}(y)\alpha(y,y')$ and we see that
\begin{eqnarray}
\label{Ae_S1_5}
\nonumber
\Vert A^{\e}_{S,1}v(x)\Vert_{L^p(\Omega)^3}&\leq& \overline{\alpha}\int_{H_  {\delta    }(0)}
\frac{1}{\vert z\vert}\; \left(\int_\Omega\chi_{\scriptscriptstyle{\Omega}}(x+\e z)\vert v(x+\e z)\vert^p\,dx\right)^{1/p}\,dz\\
&\leq& M_{S}
\Vert v\Vert_{L^p(\Omega)^3},
\end{eqnarray}
where  $M_{S}$ is independent of $\e$ and given by
\begin{eqnarray}
\label{M_S}
M_{S}=\overline{\alpha}\;\left(\int_{H_  {\delta    }(0)}  \frac{1}{\vert z\vert}\,dz\right)
=\overline{\alpha}\frac{2\pi\delta^2}{3},
\end{eqnarray}
which shows that the operators are $A^{\e}_{S,1}$ is uniformly bounded with respect to $\e$.
Similarly, $A^{\e}_{S,2}$ can be written as
\begin{eqnarray}
\label{Ae_S2_1}
A^{\e}_{S,2}v(x)&=& \int_{H_  {\delta    }(0)}\chi_{\scriptscriptstyle{\Omega}}(x+\e z)\rho^{-1}(\frac{x}{\e}) \alpha\left(\xe,\xe+z\right)
\frac{z\otimes z}{\vert z\vert^{3}}\,dz\,\; v(x).
\end{eqnarray}
Thus
\begin{eqnarray}
\label{Ae_S2_2}
\nonumber
\vert A^{\e}_{S,2}v(x)\vert &\leq& M_{S}\; \vert v(x)\vert,
\end{eqnarray}
from which the boundedness of $A^{\e}_{S,2}$ immediately follows. Combining these results shows that
$A_S^{\e}$, which is given by $A^{\e}_{S,1}-A^{\e}_{S,2}$,  is a sequence of uniformly bounded operators on $L^p(\Omega)^3$.

Next we show that  the linear operators $A_L^\e=A^{\e}_{L,1}-A^{\e}_{L,2}$  are a sequence of  uniformly bounded operators on
$L^p(\Omega)^3$. 
Changing variables $\hat{x}=x+\xi$ and applying Minkowski's inequality gives
\begin{eqnarray}
\Vert \,A^{\e}_{L,1}v\,\Vert_{L^p(\Omega)^3}&\leq&\int_{H_  {\gamma   }(0)}\left(\int_\Omega \chi_{\scriptscriptstyle{\Omega}}(x+\xi)\rho^{-1}(\frac{x}{\e})|\lambda
\frac{\xi\otimes \xi}{\vert \xi\vert^{3}}\; v(x+\xi)|^p\,dx\right)^{1/p}\,d\xi\nonumber\\
&\leq& M_L \Vert \,v\,\Vert_{L^p(\Omega)^3}
\label{A_L1_1}
\end{eqnarray}
where $M_{L}$ is given by
\begin{eqnarray}
\label{M_L}
M_{L}=\max_{y\in Y}\{\rho^{-1}(y)\}\times\lambda\frac{2\pi\gamma^2}{3},
\end{eqnarray}
and it follows that the operator $A^{\e}_{L,1}$ is bounded in $L^p(\Omega)^3$.
The boundedness of $A^{\e}_{L,2}$, which is given by (\ref{A_L2_0}), follows immediately from its definition.
Therefore $A^{\e}_L$ is uniformly bounded on $L^p(\Omega)^3$ with respect to $\e$.

Since $A^{\e}=A^{\e}_L+A^{\e}_S$, we conclude that
\begin{eqnarray}
\label{Ae_1}
\Vert A^{\e}v\Vert_{L^p(\Omega)^3}\leq M \,\Vert v\Vert_{L^p(\Omega)^3},
\end{eqnarray}
for a positive constant $M$ which is independent of $\e$.
The operator $\mathbb{A}^{\e}$ is clearly linear, thus it remains to show that this operator is
uniformly bounded on $L^p(\Omega)^3\times L^p(\Omega)^3$. To see this,
we let $(v,w)\in L^p(\Omega)^3\times L^p(\Omega)^3$. The norm in this Banach space is given by
\[
\Vert (v,w)\Vert_{L^p(\Omega)^3\times L^p(\Omega)^3} = \Vert v\Vert_{L^p(\Omega)^3}+ \Vert w\Vert_{L^p(\Omega)^3}.
\]
We note that
\begin{eqnarray*}
\mathbb{A}^{\e}
{\small \left(%
\begin{array}{c}
  v \\
  w \\
\end{array}%
\right)
=
\left(%
\begin{array}{cc}
  0 & I \\
  A^{\e} & 0 \\
\end{array}%
\right)
\left(%
\begin{array}{c}
  v \\
  w \\
\end{array}%
\right)
=
\left(%
\begin{array}{c}
  w \\
  A^{\e} v \\
\end{array}%
\right).
}
\end{eqnarray*}
Thus we obtain
\begin{eqnarray}
\label{bigAe_1}
\nonumber
\Vert \mathbb{A}^{\e}(v,w)\Vert_{L^p(\Omega)^3\times L^p(\Omega)^3} &=& \Vert w\Vert_{L^p(\Omega)^3}+ \Vert A^{\e} v\Vert_{L^p(\Omega)^3}\\
&\leq& \Vert w\Vert_{L^p(\Omega)^3}+ \Vert A^{\e}\Vert \,\Vert v\Vert_{L^p(\Omega)^3}.
\end{eqnarray}
From (\ref{bigAe_1}) it follows that
\begin{eqnarray}
\label{bigAe_2}
\Vert \mathbb{A}^{\e}(v,w)\Vert_{L^p(\Omega)^3\times L^p(\Omega)^3}
\leq M \Vert (v,w)\Vert_{L^p(\Omega)^3\times L^p(\Omega)^3},
\end{eqnarray}
for some positive constant $M$ completing the argument.

Part (b). We have seen from Part (a) that $\mathbb{A}^{\e}$ is a bounded linear operator on the Banach space
$L^p(\Omega)^3\times L^p(\Omega)^3$. Also, since $b^{\e}$ is in $C([0,T];\,L^p(\Omega)^3)$, it follows that
$B^{\e}=(0,b^{\e})$ is in $C([0,T];\,L^p(\Omega)^3\times L^p(\Omega)^3)$. From these facts, it follows from the theory
of semigroups, see for example, \cite{Pazy,Engel}.
\begin{enumerate}
\item The operator $\mathbb{A}^{\e}$ generates a uniformly continuous semigroup $\{e^{t\mathbb{A}^{\e}}\}_{t\ge 0}$
on $L^p(\Omega)^3\times L^p(\Omega)^3$, where $e^{t\mathbb{A}^{\e}}$ is given by (\ref{semigroup}).
\item The inhomogeneous Abstract Cauchy Problem (\ref{ue_ACP}) has a unique classical solution
$U^{\e}\in C^1([0,T];\,L^p(\Omega)^3\times L^p(\Omega)^3)$ which is given by (\ref{Ue_explicit}).
\label{item_2}
\end{enumerate}
It immediately follows from (\ref{item_2}) that  the second order inhomogeneous Abstract Cauchy Problem (\ref{ue_operator_eq}) has
a unique classical solution
$u^{\e}\in C^2([0,T];\,L^p(\Omega)^3)$ and formula (\ref{ue_explicit}) follows immediately from (\ref{semigroup}).

Part (c). We recall that
\begin{eqnarray*}
u^{\e}_0(x)&:=& u_0(x,\xe)\\
v^{\e}_0(x) &:=& v_0(x,\xe)
\end{eqnarray*}
where
$u_0(x,y)$, $v_0(x,y)$ are in $L^p_{\ss{per}}(Y; C(\overline{\Omega})^3)$.
We surround $\Omega$ by a cube of integer side length $L$ and extend $u_0(x,y)$ to the cube by setting $u_0(x,y)=0$ for $x$ outside $\Omega$ and for every $y$ in $Y$. We note that the extended $u_0(x,\xe)$ is $\e=\frac{1}{n}$ periodic in the second variable and shift the cube so that it is commensurate with the periods.
The period cells of side length $\e$ are denoted by $\e Y_i$ and the cube is given by their union $\cup_{i}\e Y_i$
where the index $i$ ranges from $1$ to $L^3n^3$. Since we have extended $u_0(x,y)$ so that it vanishes when $x$ lies outside $\Omega$ one can write
\begin{eqnarray}
\label{estn}
\Vert u^{\e}_0\Vert_{L^p(\Omega)^3}&=&\left(\int_{\cup_i \e Y_i}|u_0(x,\xe)|^p\,dx\right)^{1/p}.
\end{eqnarray}
Hence
\begin{eqnarray}
\Vert u^{\e}_0\Vert_{L^p(\Omega)^3}&\leq& \left(\int_{\cup_i \e Y_i} \sup_{x'\in\Omega}\vert u_0(x',\xe)\vert^p\,dx\right)^{1/p}\nonumber\\
&=&\left(\sum_{i=1}^{L^3n^3}\int_{\e Y_i} \sup_{x'\in\Omega}\vert u_0(x',\xe)\vert^p\,dx\right)^{1/p}\nonumber\\
&=&L^{3/p}\Vert u_0\Vert_{L^p_{\ss{per}}(Y;C(\overline{\Omega})^3)}.
\label{initialestu_0}
\end{eqnarray}
Here the last inequality follows from the change of variables $y=\xe$.
Thus $u_0^{\e}$ is uniformly bounded in $L^p(\Omega)^3$. Similarly $v_0^\e$ is uniformly bounded which implies that $U^{\e}_0$ is uniformly bounded
in $L^p(\Omega)^3\times L^p(\Omega)^3$. 
The same considerations show that for $t\in [0,T]$, that
$b^{\e}(t)$ is uniformly bounded in $L^p(\Omega)^3$. Since $b^{\e}(t)$ is continuous in $t$, it follows that $b^{\e}$
is uniformly bounded in $C([0,T];\,L^p(\Omega)^3)$, which implies that $B^{\e}$ is uniformly bounded
in $C([0,T];\,L^p(\Omega)^3\times L^p(\Omega)^3)$.

Next we note that
\begin{eqnarray}
\label{bigAe_ineq}
\nonumber
\Vert e^{t \mathbb{A}^{\e}}\Vert &\leq&  e^{t \Vert\mathbb{A}^{\e}\Vert}\\
&\leq&  e^{t M},
\end{eqnarray}
where in the last inequality we have used the fact that $\mathbb{A}^{\e}$ is uniformly bounded. Taking the
 norm in both sides of (\ref{Ue_explicit}) and by using (\ref{bigAe_ineq}), we obtain
\begin{eqnarray}
\label{Ue_ineq}
\Vert U^{\e}(t)\Vert_{L^p(\Omega)^3\times L^p(\Omega)^3} &\leq& M_1 e^{t M} +
\int_0^t e^{(t-\tau) M} M_2\,d\tau,
\end{eqnarray}
for some positive numbers $M_1$, $M_2$, and $M$. This implies that $U^{\e}$ is uniformly bounded
in $L^{\infty}([0,T];\,L^p(\Omega)^3\times L^p(\Omega)^3)$. Therefore the sequences $(u^{\e})_{\e>0}$ and  $(\dot{u}^{\e})_{\e>0}$
are bounded in $L^{\infty}([0,T];\,L^p(\Omega)^3)$. Finally, it follows from equation (\ref{ue_operator_eq}) that
the sequence $(\ddot{u}^{\e})_{\e>0}$ is bounded in $L^{\infty}([0,T];\,L^p(\Omega)^3)$, completing the proof.
\end{proof}

It is easily seen that for continuous initial conditions and body forces that the peridynamic solution $u^\e$ is also continuous in space provided that the bond forces and densities are continuous. To fix ideas we ``smooth out'' the  characteristic functions $\chi_{\ss{f}}$ and $\chi_{\ss{m}}$ by mollification. Indeed given any infinitely differential function $\zeta$ with compact support on $\Omega$ we fix $\beta$ such that $0<\beta<\delta$ and form $\zeta^\beta(x)=\beta^{-3}\zeta(\frac{x}{\beta})$. The mollified characteristic functions are given by $\chi^\beta_{\ss{f}}(x)=(\zeta^\beta*\chi_{\ss{f}})(x)$ and $\chi^\beta_{\ss{m}}(x)=(\zeta^\beta*\chi_{\ss{m}})(x)$.
The replacement of $\chi_{\ss{f}}$ and $\chi_{\ss{m}}$ by their mollified counter parts in \eqref{denstyfluct} and \eqref{alpha_short_1}  delivers a short range bond force $f_{\ss{short}}^\e(\eta,\xi,x)$ and density  $\rho^\e(x)$ that are continuous in $x$. For this case it is easy to see that $A^\e_{S,1}$, $A^\e_{S,2}$, $A_{L,1}$, and $A_{L,2}$ are  linear operators mapping  $C(\overline{\Omega})^3$ into itself. A straight forward application of H\"older's inequality shows that $A^\e_{S,1}$, $A^\e_{S,2}$, $A_{L,1}$, and $A_{L,2}$ are bounded
and that the operator norms of $A^\e_{S,1}$, $A^\e_{S,2}$ are uniformly bounded with respect to $\e$. To fix ideas we choose $u_0$ and $v_0$ in $C(\overline{\Omega})$ and for $b$ in $C^1([0,T];C(\Omega))$  and proceeding as before we find that the solution $u^\epsilon$ of the peridynamic initial value problem exists is unique and
belongs to $C^2([0,T];C(\overline{\Omega})^3)$.

\section{Strong Approximation by Two-Scale Functions }
\label{ch_two_scale_cong}

The aim of this  section is to build an approximation of $u^\e(x,t)$ when the period $\e$ of the microstructure is small.
In what follows we show how to systematically identify a function $u(x,y,t)$ that is oscillatory with respect to a new ``fast'' spatial variable $y$ that when rescaled $y=\frac{x}{\e}$ delivers a strong approximation to $u^\e(x,t)$, i.e.,
\begin{eqnarray}
\label{strongapprox}
\lim_{\e\rightarrow 0}\Vert u^\e(x,t)-u(x,\frac{x}{\e},t)
\Vert_{L^p(\Omega)}=0.
\end{eqnarray}
It is  shown that the desired function $u(x,y,t)$ is the ``two-scale'' limit of the sequence $\{u^\e(x,t)\}$ for $\e\rightarrow 0$. After periodically extending $u(x,y,t)$  in the $y$ variable we find that it satisfies  the two-scale peridynamic initial-value problem  given in theorem \ref{two-scale-limit-eqn-thm}. In the subsequent sections we apply this fact to show that $u(x,\frac{x}{\e},t)$ provides a strong approximation to $u^\e(x,t)$ when $\e$ is sufficiently small.

\subsection{Two-Scale Convergence}
\label{sec_two_scale}
To expedite the presentation we list the following useful function spaces 
\begin{eqnarray*}
\mathcal{K} &=& \{\psi\in  C^{\infty}_c(\mathbb{R}^3\times Y),\,\, \psi(x,y) \mbox{ is $Y$-periodic in } y\},\\
\mathcal{J} &=& \{\psi\in  C^{\infty}_c(\mathbb{R}^3\times Y\times \mathbb{R}^{+}),\,\, \psi(x,y,t) \mbox{ is $Y$-periodic in } y\},\\
\mathcal{L}_p &=& \{w\in  C([0,T];\,L^p_{\ss{per}}(Y;C(\overline{\Omega})^3) \},\\
\mathcal{Q}_p &=& \{w\in  C^{2}([0,T];\,L^p_{\ss{per}}(Y;C(\overline{\Omega})^3)\}.
\end{eqnarray*}
and introduce the definition of two-scale convergence.
Let $p$ and $p'$ be two real numbers such that $1\leq p<\infty$ and $1/p+1/p'=1$.
\begin{defn2}[Two-scale convergence \cite{Nguetseng,Allaire}]
\label{def_two_scale}
A sequence $(v^{\e})$ of functions in $L^p(\Omega)$, is said to two-scale converge to a limit
$v\in L^p(\Omega\times Y)$ if, as $\e\rightarrow 0$
\begin{equation}
\int_{\Omega} v^{\e}(x) \psi\left(x,\frac{x}{\e}\right)\,dx\rightarrow \int_{\Omega\times Y} v(x,y) \psi(x,y)\, dx dy
\end{equation}
for all $\psi\in L^{p'}(\Omega;\,C_{per}(Y))$. We will often use $v^{\e}\stackrel{2}{\rightharpoonup} v$ to denote that $(v^{\e})$
two-scale converges to $v$.
\end{defn2}
If the sequence $(v^{\e})$ is bounded in $L^p(\Omega)$ then $L^{p'}(\Omega;\,C_{per}(Y))$ can be replaced by
$\mathcal{K}$ in Definition (\ref{def_two_scale}) (see \cite{Nguetseng2}).
For time-dependent problems one slightly modifies the above two-scale convergence
to allow for homogenization with a parameter, see \cite{Showalter,E}. Here the parameter is denoted by $t$.
\begin{defn2}
\label{def_two_scale-time}
A bounded sequence $(v^{\e})$ of functions in $L^p(\Omega\times (0,T))$, is said to two-scale converge to a limit
$v\in L^p(\Omega\times Y\times (0,T))$ if, as $\e\rightarrow 0$
\begin{equation}
\int_{\Omega\times (0,T)} v^{\e}(x,t) \psi\left(x,\frac{x}{\e},t\right)\,dx dt\rightarrow
\int_{\Omega\times Y\times (0,T)} v(x,y,t) \psi(x,y,t)\, dx dy dt
\end{equation}
for all $\psi\in \mathcal{J}$.
\end{defn2}

Definition \ref{def_two_scale} is motivated by the following compactness result of Nguetseng, see \cite{Nguetseng} and Allaire \cite{Allaire}.
\begin{theorem}
\label{thm_Nguetseng}
Let $(v^{\e})$ be a bounded sequence  in $L^p(\Omega)$. Then there exists a subsequence and a function $v\in L^p(\Omega\times Y)$
such that the subsequence two-scale converges to $v$.
\end{theorem}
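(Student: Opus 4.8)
The plan is to follow the weak-compactness argument of Nguetseng and Allaire. To the bounded sequence $(v^\e)$ I would associate, for each $\e$, the linear functional
\[
L_\e(\psi)=\int_\Omega v^\e(x)\,\psi\!\left(x,\tfrac{x}{\e}\right)dx,\qquad \psi\in\mathcal{K}.
\]
Every $\psi\in\mathcal{K}$ is admissible, so $|\psi(x,x/\e)|\le\sup_{y\in Y}|\psi(x,y)|=\|\psi(x,\cdot)\|_{C_{per}(Y)}$, and since $|Y|=1$ this gives the crude but $\e$-uniform estimate $|L_\e(\psi)|\le\|v^\e\|_{L^p(\Omega)}\,\|\psi\|_{L^{p'}(\Omega;C_{per}(Y))}\le C\,\|\psi\|_{L^{p'}(\Omega;C_{per}(Y))}$ with $C=\sup_\e\|v^\e\|_{L^p(\Omega)}<\infty$. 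The space $\mathcal{K}$ is separable for the $L^{p'}(\Omega;C_{per}(Y))$ norm and is dense both in $L^{p'}(\Omega;C_{per}(Y))$ and in $L^{p'}(\Omega\times Y)$; choosing a countable dense subset $\{\psi_k\}$ and diagonalizing over $k$ produces a subsequence $\e'\to0$ along which $L_{\e'}(\psi_k)$ converges for every $k$, and the uniform bound together with density then forces $L_{\e'}(\psi)$ to converge for every $\psi\in\mathcal{K}$. Write $L_0(\psi)$ for the limit, a linear functional with $|L_0(\psi)|\le C\|\psi\|_{L^{p'}(\Omega;C_{per}(Y))}$.

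Next I would sharpen this bound so that $L_0$ can be represented by a function in $L^p(\Omega\times Y)$. The essential ingredient is the classical mean-value lemma for oscillating periodic functions: for any $Y$-periodic admissible $\phi$ (say $\phi\in L^1_{per}(Y;C(\overline{\Omega}))$),
\[
\int_\Omega\phi\!\left(x,\tfrac{x}{\e}\right)dx\ \longrightarrow\ \int_{\Omega\times Y}\phi(x,y)\,dx\,dy\qquad(\e\to0).
\]
Applying this with $\phi=|\psi|^{p'}$, which is again admissible when $\psi\in\mathcal{K}$, yields the sharp norm identity $\lim_{\e\to0}\|\psi(\cdot,\cdot/\e)\|_{L^{p'}(\Omega)}=\|\psi\|_{L^{p'}(\Omega\times Y)}$. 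Inserting this into Hölder's inequality for $L_\e(\psi)$ and passing to the limit along $\e'$ gives $|L_0(\psi)|\le C\,\|\psi\|_{L^{p'}(\Omega\times Y)}$ for all $\psi\in\mathcal{K}$. Since $\mathcal{K}$ is dense in $L^{p'}(\Omega\times Y)$, $L_0$ extends uniquely to a bounded linear functional on $L^{p'}(\Omega\times Y)$, and the Riesz representation theorem (using $1<p'<\infty$) furnishes a unique $v\in L^p(\Omega\times Y)$ with $L_0(\psi)=\int_{\Omega\times Y}v(x,y)\,\psi(x,y)\,dx\,dy$ for $\psi\in\mathcal{K}$.

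Finally I would check that $v^{\e'}\stackrel{2}{\rightharpoonup}v$ in the sense of Definition \ref{def_two_scale}, i.e.\ that $L_{\e'}(\psi)\to\int_{\Omega\times Y}v\psi$ for every $\psi\in L^{p'}(\Omega;C_{per}(Y))$, not merely for $\psi\in\mathcal{K}$. Given such a $\psi$ and $\eta>0$, pick $\tilde\psi\in\mathcal{K}$ with $\|\psi-\tilde\psi\|_{L^{p'}(\Omega;C_{per}(Y))}<\eta$; then $|L_{\e'}(\psi-\tilde\psi)|\le C\eta$ by the crude uniform bound, $|\int_{\Omega\times Y} v(\psi-\tilde\psi)|\le\|v\|_{L^p(\Omega\times Y)}\,\eta$ by Hölder and the inequality $\|\cdot\|_{L^{p'}(\Omega\times Y)}\le\|\cdot\|_{L^{p'}(\Omega;C_{per}(Y))}$, and $L_{\e'}(\tilde\psi)\to\int_{\Omega\times Y}v\tilde\psi$, so letting $\e'\to0$ and then $\eta\to0$ finishes the proof. (Alternatively one invokes the remark in the excerpt, following Definition \ref{def_two_scale}, that $\mathcal{K}$ may replace $L^{p'}(\Omega;C_{per}(Y))$ for bounded sequences.) The step that carries the real content is the mean-value lemma and the sharp norm identity it yields: it is precisely the upgrade of the trivial inequality $\|\psi\|_{L^{p'}(\Omega\times Y)}\le\|\psi\|_{L^{p'}(\Omega;C_{per}(Y))}$ to an \emph{equality in the limit} that allows $L_0$ to be represented against the genuinely weaker norm, so that the limit $v$ lies in $L^p(\Omega\times Y)$ rather than in some larger dual. (The borderline case $p=1$, $p'=\infty$ escapes the Riesz step as stated and would require a weak-$*$ compactness/measure argument, or be excluded, consistent with $p'$ being taken real.)
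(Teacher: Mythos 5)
Your proof is correct and follows the standard compactness argument of the cited literature: the paper itself gives no proof of Theorem \ref{thm_Nguetseng} but refers to \cite{Nguetseng} and \cite{Allaire}, and your route (uniform bound on the functionals $L_\e$, diagonal extraction over a countable dense subset of $\mathcal{K}$, the mean-value identity — which is exactly \eqref{test} in Proposition \ref{smooth_two_scale} — to sharpen the crude bound to one in the $L^{p'}(\Omega\times Y)$ norm, Riesz representation, and a final density argument to pass from $\mathcal{K}$ to all of $L^{p'}(\Omega;C_{per}(Y))$) is precisely the argument in those references. Your caveat about the borderline case $p=1$, $p'=\infty$ is apt: the Riesz step needs $p>1$, consistent with the paper's stipulation that $p'$ be a real number.
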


A similar two-scale  compactness holds for time dependent problems and is stated in the following theorem.
\begin{theorem}
\label{two_scale_thm_with_time}
Let $(v^{\e})$ be a bounded sequence  in $L^p(\Omega \times (0,T))^3$. Then there exists a subsequence and a function
$v\in L^p(\Omega\times Y\times (0,T))^3$
such that the subsequence two-scale converges to $v$.
\end{theorem}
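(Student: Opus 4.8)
The plan is to reduce Theorem~\ref{two_scale_thm_with_time} to the stationary compactness result Theorem~\ref{thm_Nguetseng} by treating the time variable as an extra spatial coordinate. First I would observe that a bounded sequence in $L^p(\Omega\times(0,T))^3$ is, componentwise, a bounded sequence in $L^p(D)$ where $D:=\Omega\times(0,T)\subset\mathbb{R}^4$. The natural move is to apply the stationary theorem on $D$ with the \emph{same} period cell $Y\subset\mathbb{R}^3$, but with the fast variable only oscillating in the first three coordinates: that is, we regard the ``microscopic'' cell as $Y\times\{pt\}$, or more precisely we note that Definition~\ref{def_two_scale} and Theorem~\ref{thm_Nguetseng} are insensitive to the presence of extra macroscopic variables on which no oscillation occurs. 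So I would either (i) invoke a version of Nguetseng's theorem directly on $D$ with test functions $\psi(x,t,y)$ that are $Y$-periodic in $y$ and $C_c$ in $(x,t)$ — which is exactly the content of Definition~\ref{def_two_scale-time} with the roles of $(x,t)$ played by the macroscopic variable — or (ii) give the short self-contained argument below.

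For the self-contained route, the key steps are: extract, for each of the three components $v^\e_k$, a two-scale convergent subsequence; then diagonalize so that a single subsequence works for all three components simultaneously, yielding a limit $v=(v_1,v_2,v_3)\in L^p(D\times Y)^3=L^p(\Omega\times Y\times(0,T))^3$. Concretely, apply Theorem~\ref{thm_Nguetseng} with $\Omega$ there replaced by $D\subset\mathbb{R}^4$; this gives $v^\e_k(x,t)\,\psi(x,t,\tfrac{x}{\e})\rightharpoonup$ the correct integral for all $\psi\in L^{p'}(D;C_{\mathrm{per}}(Y))$. Since $\mathcal J\subset L^{p'}(D;C_{\mathrm{per}}(Y))$ (elements of $\mathcal J$ are smooth, compactly supported, and $Y$-periodic in $y$), the convergence in particular holds for all test functions appearing in Definition~\ref{def_two_scale-time}, so the extracted subsequence two-scale converges in the sense of that definition.

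The only mild subtlety — and the step I would flag as requiring care rather than difficulty — is the dimensional mismatch: the oscillation in $v^\e(x,t)=v^\e(\cdot)$ occurs in $x\in\mathbb{R}^3$ while the domain $D$ lives in $\mathbb{R}^4$, so one must make sure the compactness theorem is being applied in a form that permits ``parameters'' (here $t$) on which the fast variable does not depend. This is standard and is exactly the point of homogenization with a parameter referenced via \cite{Showalter,E}; one checks it by noting that the measure $dx\,dt$ factors and that the oscillating test functions $\psi(x,t,\tfrac{x}{\e})$ are still admissible because $t\mapsto\psi(\cdot,t,\cdot)$ ranges in the appropriate space. Once this is in place, the $L^p(\Omega\times Y\times(0,T))^3$ regularity of the limit and the characterizing identity are immediate from Theorem~\ref{thm_Nguetseng}. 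I therefore expect the proof to be short: apply Theorem~\ref{thm_Nguetseng} in the parametrized form to each component, diagonalize over the three components, and restrict the resulting two-scale convergence to the test class $\mathcal J$.
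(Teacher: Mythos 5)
Your proposal is correct and takes essentially the same route as the paper, which gives no detailed argument but simply states that the time-dependent compactness is proved exactly as Theorem~\ref{thm_Nguetseng} with $t$ treated as a parameter, citing \cite{E} and \cite{Showalter}. Your reduction---regarding $(x,t)$ as the macroscopic variable with oscillation only in $x$, rerunning the stationary argument in that parametrized form, and diagonalizing over the three components---is the same idea, and you correctly flag and resolve the only subtlety (the fast variable living in $\mathbb{R}^3$ while the domain is $\Omega\times(0,T)\subset\mathbb{R}^4$) in the way the cited literature does.
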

The proof of compactness for the time dependent case is essentially the same as the proof of Theorem \ref{thm_Nguetseng}. 
A slight variation of
Theorem \ref{two_scale_thm_with_time} can be found  in \cite{E} and \cite{Showalter}.
For future reference we recall the following well known results on two-scale convergence that can be found in \cite{Nguetseng2}.
\begin{propo}
\label{two_scale_time_weak_convg}
Let $(v^{\e})$ be a bounded sequence  in $L^p(\Omega \times (0,T))^3$ that two-scale converges to $v\in L^p(\Omega\times Y \times (0,T))^3$.
Then as $\e\rightarrow 0$
\begin{equation*}
v^{\e}\rightarrow \int_{Y} v(x,y,t)\,dy \;\;\mbox{ weakly in } L^p(\Omega\times (0,T))^3.
\end{equation*}
\end{propo}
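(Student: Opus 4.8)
The plan is to test $v^\e$ against an arbitrary element of the dual space $L^{p'}(\Omega\times(0,T))^3$ and to reduce, by a density argument that exploits the uniform $L^p$ bound on $(v^\e)$, to the distinguished class of test functions that are independent of the fast variable $y$; for those the assertion is an immediate special case of the definition of two-scale convergence.

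First I would check that the proposed weak limit $w(x,t):=\int_Y v(x,y,t)\,dy$ really lies in $L^p(\Omega\times(0,T))^3$. Since $|Y|=1$, Jensen's inequality (equivalently H\"older in the $y$ variable) gives $\|w\|_{L^p(\Omega\times(0,T))^3}\le\|v\|_{L^p(\Omega\times Y\times(0,T))^3}<\infty$.

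Next, fix $\phi\in C^\infty_c(\Omega\times(0,T))^3$. Each component of $\phi$, viewed as a function of $(x,y,t)$ that is constant in $y$, belongs to $\mathcal{J}$ (it is trivially $Y$-periodic in $y$), so Definition \ref{def_two_scale-time} applies componentwise and yields
\[
\int_{\Omega\times(0,T)} v^\e\cdot\phi\,dx\,dt\;\longrightarrow\;\int_{\Omega\times Y\times(0,T)} v(x,y,t)\cdot\phi(x,t)\,dx\,dy\,dt=\int_{\Omega\times(0,T)} w\cdot\phi\,dx\,dt,
\]
the last equality by Fubini. This is the whole point: the $y$-independent smooth functions are admissible two-scale test functions.

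Finally I would promote this to all of $L^{p'}$. Writing $M:=\sup_\e\|v^\e\|_{L^p(\Omega\times(0,T))^3}<\infty$ and, given $\phi\in L^{p'}(\Omega\times(0,T))^3$, choosing $\phi_k\in C^\infty_c(\Omega\times(0,T))^3$ with $\|\phi-\phi_k\|_{L^{p'}}\to0$ (possible for $1<p<\infty$, i.e. $1<p'<\infty$), one estimates
\[
\Bigl|\int (v^\e-w)\cdot\phi\,dx\,dt\Bigr|\le\Bigl|\int (v^\e-w)\cdot\phi_k\,dx\,dt\Bigr|+\|\phi-\phi_k\|_{L^{p'}}\,(M+\|w\|_{L^p}),
\]
and lets first $\e\to0$ (killing the first term by the previous step) and then $k\to\infty$. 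The borderline case $p=1$, $p'=\infty$ is not covered by this density scheme and is instead the classical statement found in \cite{Nguetseng2}. The one step that genuinely needs care is this last one: replacing a general test function by a smooth one must produce an error bounded uniformly in $\e$, which is precisely where the hypothesis that $(v^\e)$ is bounded in $L^p$ — rather than merely two-scale convergent — is used.
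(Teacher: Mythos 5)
Your argument is correct, but there is nothing in the paper to compare it against: Proposition \ref{two_scale_time_weak_convg} is stated there as a recalled fact, with the proof delegated to the cited literature (\cite{Nguetseng2}), so you have in effect supplied the standard proof that the authors omit. Your route is the canonical one: $y$-independent functions $\phi\in C^\infty_c(\Omega\times(0,T))^3$ are admissible two-scale test functions (they belong to $\mathcal{J}$ after trivial extension), which gives convergence of $\int v^\e\cdot\phi$ to $\int w\cdot\phi$ with $w=\int_Y v\,dy$, and the uniform $L^p$ bound then lets you pass from smooth $\phi$ to arbitrary $\phi\in L^{p'}$ by density; the preliminary check $\|w\|_{L^p}\le\|v\|_{L^p(\Omega\times Y\times(0,T))^3}$ via Jensen (using $|Y|=1$) is exactly what is needed for the error term $\|\phi-\phi_k\|_{L^{p'}}(M+\|w\|_{L^p})$ to be uniform in $\e$. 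Two remarks. First, you are right to single out the endpoint: your density scheme needs $p'<\infty$, i.e.\ $p>1$, and indeed for $p=1$ the statement is genuinely delicate --- boundedness in $L^1$ plus convergence against the smooth test class of Definition \ref{def_two_scale-time} does not preclude concentration, so weak $L^1$ convergence cannot be obtained this way; this is harmless for the paper, which only invokes the proposition for $\frac{3}{2}<p<\infty$, but it means the clause ``is instead the classical statement found in \cite{Nguetseng2}'' papers over a case that the cited theory also treats only for $1<p<\infty$. Second, a cosmetic point: weak convergence of the vector-valued sequence is exactly duality against $L^{p'}(\Omega\times(0,T))^3$, so your componentwise reduction is fine, but it is worth saying explicitly that the extension of $\phi$ by zero off $\Omega\times(0,T)$ is what places it in $C^\infty_c(\mathbb{R}^3\times Y\times\mathbb{R}^{+})$.
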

\begin{propo}
\label{strongvstwoscale}
If  $v^\e(x)$ converges to $v(x)$ in $L^p(\Omega)^3$   then its two-scale limit is $v$.
\end{propo}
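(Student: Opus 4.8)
The plan is to verify the defining relation of Definition~\ref{def_two_scale} head on. Since the statement is componentwise it suffices to treat scalar $v^\e$, and since strong convergence in $L^p(\Omega)$ forces $(v^\e)$ to be bounded, I may work with any test function $\psi\in L^{p'}(\Omega;\,C_{per}(Y))$. Thus the goal is to show
\[
\int_\Omega v^\e(x)\,\psi\!\left(x,\tfrac{x}{\e}\right)\,dx \longrightarrow \int_{\Omega\times Y} v(x)\,\psi(x,y)\,dx\,dy ,
\]
and the natural first move is the splitting
\[
\int_\Omega v^\e(x)\,\psi\!\left(x,\tfrac{x}{\e}\right)dx = \int_\Omega\bigl(v^\e(x)-v(x)\bigr)\,\psi\!\left(x,\tfrac{x}{\e}\right)dx + \int_\Omega v(x)\,\psi\!\left(x,\tfrac{x}{\e}\right)dx ,
\]
in which the first term is controlled purely by the strong convergence $v^\e\to v$ and the second term is exactly the assertion that the constant-in-$\e$ sequence $v$ two-scale converges to itself.

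For the first term I would put $\Psi(x):=\|\psi(x,\cdot)\|_{C(Y)}$, which lies in $L^{p'}(\Omega)$ because $\psi\in L^{p'}(\Omega;C_{per}(Y))$; then $|\psi(x,x/\e)|\le\Psi(x)$ pointwise, so $\|\psi(\cdot,\cdot/\e)\|_{L^{p'}(\Omega)}\le\|\Psi\|_{L^{p'}(\Omega)}$ uniformly in $\e$, and H\"older's inequality gives
\[
\left|\int_\Omega\bigl(v^\e-v\bigr)\,\psi\!\left(x,\tfrac{x}{\e}\right)dx\right|\le\|v^\e-v\|_{L^p(\Omega)}\,\|\Psi\|_{L^{p'}(\Omega)}\longrightarrow 0 .
\]
For the second term I would set $\phi(x,y):=v(x)\psi(x,y)$; by H\"older $\phi\in L^1(\Omega;C_{per}(Y))$, and the matter reduces to the fundamental mean-value property
\[
\int_\Omega\phi\!\left(x,\tfrac{x}{\e}\right)dx\longrightarrow\int_{\Omega\times Y}\phi(x,y)\,dx\,dy
\qquad\text{for all }\phi\in L^1(\Omega;C_{per}(Y)) .
\]
This I would establish by density: it holds for elementary tensors $\phi(x,y)=\mathbf{1}_{B}(x)g(y)$ with $B\subset\Omega$ a box and $g\in C_{per}(Y)$ by the classical periodic Riemann--Lebesgue argument (tile $B$ by $\e$-cells $\e Y_i$, use $\int_{\e Y_i}g(x/\e)\,dx=\e^{3}\int_Y g\,dy$ on the full cells, and absorb the $O(\e)$ boundary-cell error); it extends by linearity to finite sums of such tensors, and these are dense in $L^1(\Omega;C_{per}(Y))$, the passage to the limit being legitimate because $\int_\Omega|\phi(x,x/\e)|\,dx\le\|\phi\|_{L^1(\Omega;C_{per}(Y))}$ uniformly in $\e$.

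Combining the two estimates yields the displayed convergence, and since its right-hand side is $\int_{\Omega\times Y}\tilde v(x,y)\,\psi(x,y)\,dx\,dy$ with $\tilde v(x,y):=v(x)$ independent of $y$, this says precisely that $v^\e\stackrel{2}{\rightharpoonup}v$, i.e. the two-scale limit is $v$. I expect the only step needing genuine care to be the mean-value lemma: because $\phi$ is merely $L^1$ (not continuous) in $x$, one cannot argue by a direct Riemann-sum estimate and must route through the tensor case together with a density argument, the uniform $L^1$ bound on $\phi(\cdot,\cdot/\e)$ doing the work in the limit. An alternative would be to invoke Theorem~\ref{thm_Nguetseng} to extract a two-scale limit $w$ along a subsequence and Proposition~\ref{two_scale_time_weak_convg} to identify $\int_Y w\,dy=v$; but that pins down only the $y$-average of $w$, so to recover the full two-scale limit one is led back to the computation above, which is why I would present the direct route.
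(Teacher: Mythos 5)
Your argument is correct. Note, though, that the paper does not prove Proposition~\ref{strongvstwoscale} at all: it is recalled as a known fact with a pointer to the two-scale convergence literature \cite{Nguetseng2}, so there is no in-paper proof to compare against. What you give is the standard argument from that literature, and it is complete in its essentials: the splitting into $\int_\Omega(v^\e-v)\,\psi(x,x/\e)\,dx$, killed by strong convergence together with the uniform bound $\|\psi(\cdot,\cdot/\e)\|_{L^{p'}(\Omega)}\le\|\,\|\psi(x,\cdot)\|_{C(Y)}\|_{L^{p'}(\Omega)}$, plus the mean-value (admissibility) property $\int_\Omega\phi(x,x/\e)\,dx\to\int_{\Omega\times Y}\phi\,dx\,dy$ for $\phi\in L^1(\Omega;C_{per}(Y))$, proved by the tensor/density scheme with the uniform $L^1$ bound doing the limit passage; this is exactly the mechanism the cited reference uses, and your closing remark correctly explains why the shortcut via Theorem~\ref{thm_Nguetseng} and Proposition~\ref{two_scale_time_weak_convg} only identifies the $y$-average and cannot replace the direct computation. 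One small point of housekeeping: in the density step your elementary tensors use boxes $B\subset\Omega$, whereas general measurable subsets of $\Omega$ are approximated by unions of boxes that need not sit inside $\Omega$; the cleanest fix is to extend $\phi$ by zero to a large cube containing $\Omega$ (as the paper itself does in the proof of Proposition~\ref{existuniq_prop}(c)) and run the tiling argument there. This is routine and does not affect the validity of your proof.
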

Last we state two-scale convergence theorems for test functions.
\begin{propo}
\label{smooth_two_scale}
If $\psi(x,y)$ belongs to  $\mathcal{K}$ or $L_{\ss{per}}^p(Y;C(\overline\Omega)^3)$ then $\psi(x,\frac{x}{\e})$ two-scale converges to $\psi(x,y)$ and 
\begin{eqnarray}
\label{test}
\lim_{\e\rightarrow 0}\Vert\psi(x,\frac{x}{\e})\Vert_{L^p(\Omega)}^p=\int_{\Omega\times Y}|\psi(x,y)|^p\,dx\,dy.
\end{eqnarray}
Moreover given any bounded sequence $v^\e$ in $L^p(\Omega)^3$ two-scale converging to $v$ then 
\begin{eqnarray}
\lim_{\e\rightarrow 0}\int_\Omega v^\e(x)\psi(x,\frac{x}{\e})\,dx=\int_{\Omega\times Y}v(x,y)\psi(x,y)\,dxdy
\label{twotest}
\end{eqnarray}
for every test function $\psi$ belonging to $L_{\ss{per}}^p(Y;C(\overline\Omega)^3)$.

Similarly if $\psi(x,y,t)$ belongs to  $\mathcal{J}$ or $\mathcal{L}_p$ then $\psi(x,\frac{x}{\e},t)$ two-scale converges to $\psi(x,y,t)$ and 
\begin{eqnarray}
\label{testt}
\lim_{\e\rightarrow 0}\Vert\psi(x,\frac{x}{\e},t)\Vert_{L^p(\Omega\times(0,T))^3}^p=\int_{\Omega\times Y\times(0,T)}|\psi(x,y,t)|^p\,dx\,dy\,dt.
\end{eqnarray}
Moreover given any bounded sequence $v^\e$ in $L^p(\Omega\times (0,T))^3$ two-scale converging to $v$ then 
\begin{eqnarray}
\lim_{\e\rightarrow 0}\int_{\Omega \times (0,T)}v^\e(x,t)\psi(x,\frac{x}{\e},t)\,dxdt=\int_{\Omega\times Y\times(0,T)}v(x,y,t)\psi(x,y,t)\,dxdydt
\label{twotestt}
\end{eqnarray}
for every test function $\psi$ belonging to $\mathcal{L}_p$.
\end{propo}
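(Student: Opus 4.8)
The plan is to reduce every assertion to one cell-counting averaging lemma and then propagate it by density.

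\emph{Step 1 (averaging lemma).} First I would prove: if $\Phi\in C(\overline\Omega;L^1_{\ss{per}}(Y))$, i.e. $\Phi(x,y)$ is $Y$-periodic in $y$, integrable in $y$ and continuous in $x$ as an $L^1(Y)$-valued map, then $\int_\Omega\Phi(x,x/\e)\,dx\to\int_{\Omega\times Y}\Phi(x,y)\,dx\,dy$ as $\e\to0$ (recall $|Y|=1$). The argument partitions $\Omega$ into the $\e$-cells $\e(k+Y)$, $k\in\mathbb Z^3$; on an interior cell the change of variables $x=\e k+\e w$ and the $Y$-periodicity give $\int_{\e(k+Y)}\Phi(x,x/\e)\,dx=\e^3\int_Y\Phi(\e k+\e w,w)\,dw$, which, up to $\e^3$ times the modulus of continuity of $x\mapsto\Phi(x,\cdot)$, equals $\int_{\e(k+Y)}\big(\int_Y\Phi(x,w)\,dw\big)dx$; summing over interior cells yields a Riemann sum for $\int_{\Omega\times Y}\Phi$, and the cells meeting $\partial\Omega$ contribute $O(\e)\sup_x\|\Phi(x,\cdot)\|_{L^1(Y)}$, hence vanish. (That $\Phi(\cdot,\cdot/\e)$ is measurable and integrable for small $\e$ is the Carath\'eodory fact already noted in Section~1.)

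\emph{Step 2 (smooth $\psi$).} For $\psi\in\mathcal K$ both $|\psi(x,y)|^p$ and, for any $\phi\in\mathcal K$, $\psi(x,y)\cdot\phi(x,y)$ lie in $C(\overline\Omega;C_{per}(Y))\subset C(\overline\Omega;L^1_{\ss{per}}(Y))$, so Step~1 immediately gives the norm identity \eqref{test} and $\int_\Omega\psi(x,x/\e)\cdot\phi(x,x/\e)\,dx\to\int_{\Omega\times Y}\psi\cdot\phi$. By \eqref{test} the sequence $\psi(\cdot,\cdot/\e)$ is bounded in $L^p(\Omega)$, and $\mathcal K$ is dense in $L^{p'}(\Omega;C_{per}(Y))$ (the remark following Definition~\ref{def_two_scale}); a triangle-inequality estimate using these two facts upgrades the previous convergence to every test function in $L^{p'}(\Omega;C_{per}(Y))$, i.e. $\psi(x,x/\e)\stackrel{2}{\rightharpoonup}\psi$.

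\emph{Step 3 (general $\psi$ and the mixed limit).} Given $\psi\in L^p_{\ss{per}}(Y;C(\overline\Omega)^3)$, pick $\psi_k\in\mathcal K$ with $\psi_k\to\psi$ in that norm (mollify in $y$, then in $x$, then multiply by a cutoff equal to $1$ on $\overline\Omega$). The map $(x,y)\mapsto\sup_{x'\in\overline\Omega}|\psi(x',y)-\psi_k(x',y)|^p$ is independent of $x$ and lies in $L^1_{\ss{per}}(Y)$, so Step~1 yields $\limsup_{\e\to0}\|\psi(\cdot,\cdot/\e)-\psi_k(\cdot,\cdot/\e)\|_{L^p(\Omega)}^p\le|\Omega|\,\|\psi-\psi_k\|_{L^p_{\ss{per}}(Y;C(\overline\Omega))}^p$; together with Step~2 for each $\psi_k$ this extends \eqref{test} and $\psi(x,x/\e)\stackrel{2}{\rightharpoonup}\psi$ to all such $\psi$. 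For \eqref{twotest}, let $v^\e$ be bounded in $L^p(\Omega)^3$ with $v^\e\stackrel{2}{\rightharpoonup}v$; for fixed $k$, $\psi_k$ is an admissible test function, so $\int_\Omega v^\e\cdot\psi_k(\cdot,\cdot/\e)\to\int_{\Omega\times Y}v\cdot\psi_k$, while the two error terms $\int_\Omega v^\e\cdot(\psi-\psi_k)(\cdot,\cdot/\e)$ and $\int_{\Omega\times Y}v\cdot(\psi-\psi_k)$ are bounded by H\"older's inequality and the estimate just proved (used with the exponent $p'$). Sending $\e\to0$ and then $k\to\infty$ gives \eqref{twotest}. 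The time-dependent statements \eqref{testt} and \eqref{twotestt} follow by running Steps~1--3 with $t\in(0,T)$ as a passive parameter and integrating in $t$, the $t$-integral passing to the limit by dominated convergence using continuity in $t$ (membership in $\mathcal L_p$ or $\mathcal J$) and the uniform-in-$\e$ bounds obtained for each fixed $t$.

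\emph{Main obstacle.} Steps~1 and~2 and the density facts are routine; the only delicate point is the H\"older step in Step~3 for \eqref{twotest}, where $\psi$ is given only in the $L^p$-scale in $y$ while the tail $(\psi-\psi_k)(\cdot,\cdot/\e)$ must be controlled in the $L^{p'}$-scale. Since $|Y|<\infty$ this is automatic when $p\ge2$; for $1\le p<2$ one uses the additional integrability of the sequences $v^\e$ actually arising here (they are also uniformly bounded in $C(\overline\Omega)^3$ in the smooth-coefficient setting of Section~\ref{ch_exist_and_uniq}), which allows the H\"older pairing to be taken with exponents $1$ and $\infty$ instead.
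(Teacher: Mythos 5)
The paper never proves Proposition \ref{smooth_two_scale}: it is stated as a known fact from the two-scale convergence literature (Nguetseng, Allaire, Lukkassen--Nguetseng--Wall), so there is no in-paper argument to compare with; what you propose is exactly the standard proof, and its core is sound. Step 1 is the usual mean-value (oscillation) lemma for admissible functions -- just note that the boundary-cell estimate needs $\vert\partial\Omega\vert=0$ (true here) and that the density of $\mathcal K$ in the Bochner spaces requires the Tietze-extension/cutoff step you sketch; with that, Steps 2--3 correctly deliver \eqref{test}, \eqref{testt} and the two-scale convergence of $\psi(x,\frac{x}{\e})$ and $\psi(x,\frac{x}{\e},t)$. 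The one substantive issue is the one you flag yourself, and it is really a defect of the statement as printed rather than of your argument: in \eqref{twotest} and \eqref{twotestt} the test function is taken in $L^p_{\ss{per}}(Y;C(\overline\Omega)^3)$, the same exponent as the sequence, so for $1\leq p<2$ and a general bounded sequence $v^\e$ in $L^p$ the product $v^\e\cdot\psi(\cdot,\frac{\cdot}{\e})$ need not even be integrable; the classical admissibility theorem (and the way the proposition is actually used in the paper, where the test functions $\rho(y)\psi(x,y,t)$ are bounded in $y$) requires $\psi$ in the dual scale $L^{p'}_{\ss{per}}(Y;C(\overline\Omega)^3)$, and with that reading your Step 3 goes through verbatim, with the tail estimate of the form $\limsup_{\e\to0}\Vert(\psi-\psi_k)(\cdot,\cdot/\e)\Vert_{L^{p'}(\Omega)}\leq C\Vert\psi-\psi_k\Vert_{L^{p'}_{\ss{per}}(Y;C(\overline\Omega)^3)}$ run at exponent $p'$ instead of $p$. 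Your alternative fix for $p<2$ -- invoking uniform $C(\overline\Omega)^3$ bounds on $v^\e$ -- is not available for the sequences the paper actually treats (the solutions $u^\e$ with discontinuous coefficients are merely $L^p$ in space), so you should either restate the hypothesis with $p'$ or restrict \eqref{twotest}, \eqref{twotestt} to $p\geq 2$; in all of the paper's applications the dual-scale hypothesis is satisfied, so nothing downstream is affected.
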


\subsection{The Two-Scale Limit Equation}
In this section, we use two-scale convergence to identify the limit of the solution $u^\e(x,t)$ of (\ref{peridynamic1})-(\ref{peridynamic1ic2}) for initial data $u_0^\e=u_0(x,\frac{x}{\e})$, $v_0=v_0(x,\frac{x}{\e})$ and body force $b^\e(x,\frac{x}{\e},t)$ with $u_0$ and $v_0$ in $L^p_{per}(Y;C(\overline{\Omega})^3)$ and $b\in\mathcal{L}_p$.
For $v\in L^p(\Omega)^3$, with $\frac{3}{2}< p<\infty$, let
\begin{eqnarray}
\label{K_L1_0}
K_{L,1}v(x)&=&\int_{H_{\gamma}(x)\cap\Omega} \lambda
\frac{(\xh-x)\otimes(\xh-x)}{\vert\xh-x\vert^{3}}\; v(\xh)\,d\xh,\\
\label{K_L2_0}
K_{L,2}v(x)&=&\int_{H_{\gamma}(x)\cap\Omega} \lambda
\frac{(\xh-x)\otimes(\xh-x)}{\vert\xh-x\vert^{3}}\,d\xh\;v(x),\\
\label{Ke_S1_0}
K^{\e}_{S,1}v(x)&=& \int_{H_  {\e  \delta    }(x)\cap\Omega} \frac{1}{\e^{2}}\,\alpha\left(\xe,\frac{\xh}{\e}\right)
\frac{(\xh-x)\otimes(\xh-x)}{\vert\xh-x\vert^{3}}\; v(\xh)\,d\xh,\\ 
\label{Ke_S2_0}
K^{\e}_{S,2}v(x)&=& \int_{H_  {\e  \delta    }(x)\cap\Omega} \frac{1}{\e^{2}}\,\alpha\left(\xe,\frac{\xh}{\e}\right)
\frac{(\xh-x)\otimes(\xh-x)}{\vert\xh-x\vert^{3}}\,d\xh\;v(x).
\end{eqnarray}
Set $K_L=K_{L,1}-K_{L,2}$ and $K^{\e}_S=K^{\e}_{S,1}-K^{\e}_{S,2}$ and the peridynamic equation \eqref{peridynamic1} is written
\begin{eqnarray}
\label{pd}
\rho(\frac{x}{\e})\partial^2_{t}u^\e(x,t)=\left(K_L +K^\e_{S}\right)u^{\e}(x,t)+b\left(x,\frac{x}{\e},t\right)
\end{eqnarray}
We start by noting that
the loading force and initial data are in $\mathcal{L}_p$ and $L_{\ss{per}}^p(Y;C(\overline{\Omega})^3)$ respectively and from Proposition \ref{smooth_two_scale} satisfy the following
\begin{subequations}
\label{four-case-conds}
\begin{eqnarray}
b\left(x,\xe,t\right)&\stackrel{2}{\rightharpoonup}& b(x,y,t),\\
u_0\left(x,\xe\right)&\stackrel{2}{\rightharpoonup}&u_0(x,y),\\
v_0\left(x,\xe\right)&\stackrel{2}{\rightharpoonup}&v_0(x,y).
\end{eqnarray}
\end{subequations}
We note that from
 Proposition \ref{existuniq_prop}(c) and Theorem \ref{two_scale_thm_with_time} it follows that, up to some subsequences,
$\twoscale{u^{\e}}{u}$, $\twoscale{\dot{u}^{\e}}{u^{*}}$, and $\twoscale{\ddot{u}^{\e}}{u^{**}}$,
where $u$, $u^{*}$, and $u^{**}$ are in
$L^p(\Omega\times Y\times [0,T])^3$. 
We shall see later that
$u(x,y,t)$ is uniquely determined by an initial value problem. Therefore $u$ is independent of the subsequence, and the whole
sequence $(u^{\e})$ two-scale converges to $u$.

We start by extending the function $u(x,y,t)$ in the $y$ variable from $Y$ to $\mathbb{R}^3$ as a $Y$-periodic function. 
The next task is to identify the dynamics of the periodically extended $u(x,y,t)$. 
We multiply both sides of \eqref{pd} by a test function
$\psi(x,\frac{x}{\e},t)$, where $\psi(x,y,t)$ is $Y$-periodic in $y$ and is such that
$\psi\in C^{\infty}_{c}(\mathbb{R}^3\times Y\times \mathbb{R})^3$,
and integrate over $\Omega\times \mathbb{R}^{+}$
\begin{eqnarray}
\nonumber
\int_{\Omega\times \mathbb{R}^{+}} &&\partial^{2}_{t}u^{\e}(x,t)\cdot \psi\left(x,\frac{x}{\e},t\right)\rho(\frac{x}{\e})\,dx dt\\
\nonumber
=\int_{\Omega\times \mathbb{R}^{+}} &&\left((K_{L}+K^{\e}_{S})u^{\e}(x,t) +b\left(x,\xe,t\right)\right)\cdot \psi\left(x,\frac{x}{\e},t\right)\,dx dt
\end{eqnarray}
After integrating by parts twice, we obtain
\begin{equation}
\renewcommand{\arraystretch}{1.5}
\begin{array}{ll}
\displaystyle
\nonumber
\int_{\Omega\times \mathbb{R}^{+}} u^{\e}(x,t)\cdot\partial^{2}_{t} \psi\left(x,\frac{x}{\e},t\right)\rho(\frac{x}{\e})\,dx dt -
\int_{\Omega} \partial_{t}u^{\e}(x,0)\cdot \psi\left(x,\frac{x}{\e},0\right)\rho(\frac{x}{\e})\,dx &\\
\label{ue_and_two_scale}
\displaystyle
\nonumber
+\int_{\Omega} u^{\e}(x,0)\cdot \partial_{t}\psi\left(x,\frac{x}{\e},0\right)\rho(\frac{x}{\e})\,dx&\\
\displaystyle
= \int_{\Omega\times \mathbb{R}^{+}} \left((K_{L}+K^{\e}_{S})u^{\e}(x,t)+b\left(x,\xe,t\right)\right)\cdot \psi\left(x,\frac{x}{\e},t\right)\,dx dt&
\end{array}
\end{equation}

Passing to the  $\e\rightarrow 0$ limit we obtain
\begin{eqnarray}
\nonumber
&&\int_{\Omega\times Y\times \mathbb{R}^{+}} u(x,y,t)\cdot\partial^{2}_{t} \psi(x,y,t)\rho(y)\,dx dy dt -
\int_{\Omega\times Y} v_0(x,y)\cdot \psi(x,y,0)\rho(y)\,  dx dy \\
\label{limit_ue_and_two_scale}
\nonumber
&&+\int_{\Omega\times Y} u_0(x,y)\cdot\partial_{t} \psi(x,y,0)\rho(y)\,  dx dy \\
\nonumber
&&= \lim_{\e\rightarrow 0}\int_{\Omega\times \mathbb{R}^{+}} (K_{L}+K^{\e}_{S})u^{\e}(x,t)\cdot \psi\left(x,\frac{x}{\e},t\right)\,dx dt\\
&&+\int_{\Omega\times Y\times \mathbb{R}^{+}}b(x,y,t)\cdot \psi(x,y,t)\,dx dy dt
\end{eqnarray}

We will use  the following lemma to compute the limit on the right hand side of (\ref{limit_ue_and_two_scale}).
\begin{lemma}
\label{two_scale_lemma}
Let  $w$ be in $L^{p}_{\ss{per}}(Y;\,C(\overline{\Omega})^3)$ with $\frac{3}{2}<p<\infty$, and define
\begin{eqnarray*}
B_L w(x,y)&=&\int_{H_{\gamma}(x)\cap\Omega} \lambda
\frac{(\xh-x)\otimes(\xh-x)}{\vert\xh-x\vert^{3}}\left(\int_{Y}w(\xh,y')\,dy'-w(x,y)\right)\,d\xh,\\
B_S w(x,y) &=&\int_{H_{\delta}(y)} \alpha(y,\yh)
\frac{(\yh-y)\otimes(\yh-y)}{\vert\yh-y\vert^{3}}\left(w(x,\yh)-w(x,y)\right)\,d\yh.
\end{eqnarray*}
Then as $\e \rightarrow 0$,
\begin{enumerate}
\item[(a)] $\displaystyle\twoscale{K_{L}u^{\e}(x,t)}{B_L u(x,y,t)}$.\\
Moreover, the operator $\rho^{-1}B_L$ is  linear and bounded on $L^{p}_{\ss{per}}(Y;\,C(\overline{\Omega})^3)$.

\item[(b)] $\displaystyle\twoscale{K^{\e}_{S}u^{\e}(x,t)}{B_S u(x,y,t)}$.\\
Moreover, the operator $\rho^{-1}B_S$ is linear and bounded on $L^{p}_{\ss{per}}(Y;\,C(\overline{\Omega})^3)$.
\end{enumerate}
\end{lemma}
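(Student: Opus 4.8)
The plan is to treat the two assertions in Lemma~\ref{two_scale_lemma} separately, but using the same two-ingredient strategy for each: (i) show that $K_L u^\e$ and $K^\e_S u^\e$ are bounded sequences in $L^p(\Omega\times(0,T))^3$ so that, by Theorem~\ref{two_scale_thm_with_time}, they have two-scale limits; and (ii) identify those limits by testing against $\psi\in\mathcal{J}$ and passing to the limit. Boundedness of $K_L u^\e$ is immediate from the bounds already used in the proof of Proposition~\ref{existuniq_prop}(a), since $K_L$ is exactly $\rho_\e$ times $A^\e_L$ and $(u^\e)$ is bounded in $L^\infty([0,T];L^p(\Omega)^3)$ by Proposition~\ref{existuniq_prop}(c); similarly for $K^\e_S$ using the $M_S$ bound in \eqref{M_S}. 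That $\rho^{-1}B_L$ and $\rho^{-1}B_S$ are linear and bounded on $L^p_{\ss{per}}(Y;C(\overline\Omega)^3)$ follows from the same Minkowski/H\"older estimates as in Section~\ref{ch_exist_and_uniq} applied now in the $(x,y)$ variables, noting that $|\yh-y|^{-1}$ is integrable over $H_\delta(0)$ and $|\xh-x|^{-1}$ over $H_\gamma(0)$.

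For part (a), I would write $\int_{\Omega\times\mathbb{R}^+}K_L u^\e(x,t)\cdot\psi(x,\tfrac{x}{\e},t)\,dx\,dt$, use the change of variables $\xh=x+\xi$ and Fubini to move the convolution onto $\psi$, obtaining an integral of $u^\e(x,t)$ against a test function of the form $\lambda\int_{H_\gamma(0)}\tfrac{\xi\otimes\xi}{|\xi|^3}\big(\psi(x-\xi,\tfrac{x-\xi}{\e},t)-\psi(x,\tfrac{x}{\e},t)\big)\,d\xi$ plus boundary corrections near $\partial\Omega$. Here is the one subtlety worth flagging: the argument $\tfrac{x-\xi}{\e}$ does not converge to a fixed point in $Y$ as $\e\to0$ because $\xi$ is not rescaled, so $\psi(x-\xi,\tfrac{x-\xi}{\e},t)$ does not two-scale converge to $\psi(x-\xi,y,t)$; instead, after the shift, the fast variable evaluated at the shifted slow point is $\tfrac{x}{\e}-\tfrac{\xi}{\e}$, and since $\e=1/n$, the vector $\tfrac{\xi}{\e}=n\xi$ is a (non-integer in general) shift. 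The correct way to see the limit is to view the whole expression as $u^\e$ tested against a function in $L^{p'}(\Omega;C_{\ss{per}}(Y))$: the long-range kernel only sees the \emph{average} over the fast cell, which is why $\int_Y w(\xh,y')\,dy'$ appears in $B_L$. Concretely, I would approximate $\psi$ by finite sums $\sum_k \phi_k(x,t)\eta_k(y)$, for each term commute the $\xi$-integral and the $\e\to0$ limit, and use Proposition~\ref{two_scale_time_weak_convg} (which says $u^\e\rightharpoonup\int_Y u(x,y,t)\,dy$ weakly in $L^p$) on the long-range piece while the diagonal self-term $-\psi(x,\tfrac{x}{\e},t)\int\tfrac{\xi\otimes\xi}{|\xi|^3}d\xi$ pairs against $u^\e$ via genuine two-scale convergence. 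Assembling gives exactly $\int_{\Omega\times Y\times\mathbb{R}^+}B_L u(x,y,t)\cdot\psi(x,y,t)\,dx\,dy\,dt$.

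For part (b), the change of variables is $\xh=x+\e z$, which absorbs the $\e^{-2}$ prefactor (one $\e^{-1}$ from $d\xh=\e^3 dz$ against $|\xh-x|^{-3}=\e^{-3}|z|^{-3}$, leaving $\e^{-2}\cdot\e^{-3+3}=\e^{-2}$... more carefully, $\e^{-2}\cdot\e^3\cdot\e^{-3}=\e^{-2}$; the surviving factor is handled because $\alpha$ is evaluated at $(\tfrac{x}{\e},\tfrac{x}{\e}+z)$, i.e. the fast variable at two nearby points, and $|z|^{-1}$ is integrable). After this substitution the integrand involves $\alpha(\tfrac{x}{\e},\tfrac{x}{\e}+z)\tfrac{z\otimes z}{|z|^3}\big(u^\e(x+\e z,t)-u^\e(x,t)\big)\cdot\psi(x,\tfrac{x}{\e},t)$; the key point is that $\alpha(\tfrac{x}{\e},\tfrac{x}{\e}+z)=\tilde\alpha_z(\tfrac{x}{\e})$ for a fixed $Y$-periodic function $\tilde\alpha_z$ (for each $z$), so the product $\tilde\alpha_z(\tfrac{x}{\e})\psi(x,\tfrac{x}{\e},t)$ is an admissible two-scale test function, and moving the difference quotient onto the test function via the shift $x\mapsto x-\e z$ turns $u^\e(x+\e z,t)$ into $u^\e(x,t)$ tested against $\tilde\alpha_z(\tfrac{x}{\e}-z)\psi(x-\e z,\tfrac{x}{\e}-z,t)$, whose two-scale limit is $\alpha(y-z,y)\psi(x,y-z,t)$ — wait, one must track this shift carefully; the cleanest route is to keep the difference $u^\e(x+\e z)-u^\e(x)$ intact, note that $x\mapsto x+\e z$ shifts the fast variable by $z$, i.e. $y\mapsto y+z$ in the limit, and conclude that $\int u^\e(x+\e z,t)\tilde\alpha_z(\tfrac{x}{\e})\psi(x,\tfrac{x}{\e},t)\,dx\to\int u(x,y+z,t)\alpha(y,y+z)\psi(x,y,t)$ by translating back. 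Integrating the resulting expression in $z$ over $H_\delta(0)$ and renaming $\yh=y+z$ yields precisely $\int_{\Omega\times Y\times\mathbb{R}^+}B_S u(x,y,t)\cdot\psi(x,y,t)$. The main obstacle in both parts is making the interchange of the $\xi$- (resp.\ $z$-) integral with the $\e\to0$ limit rigorous — this requires a dominated-convergence argument uniform in $\xi$ (resp.\ $z$), using the $L^p$-boundedness of $(u^\e)$ together with the integrable singularities $|\xi|^{-1}$, $|z|^{-1}$, and the density of finite sums $\sum_k\phi_k(x,t)\eta_k(y)$ in the relevant test-function space to reduce to the separated case where Propositions~\ref{two_scale_time_weak_convg} and~\ref{smooth_two_scale} apply directly.
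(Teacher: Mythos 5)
Your overall route is the paper's: for the long-range part, the limit is driven by the fact that $u^{\e}$ converges weakly to its $Y$-average (Proposition \ref{two_scale_time_weak_convg}) when integrated against the fixed kernel, while the diagonal multiplication term passes by genuine two-scale convergence (your dualization onto the test function in (a) is only a cosmetic variant of the paper's argument, which instead shows $K_{L,1}u^{\e}\rightarrow B_{L,1}u$ strongly in $L^p$ and then pairs with $\psi(x,\tfrac{x}{\e},t)$); for the short-range part you rescale $\xh=x+\e z$, freeze $z$, exploit that $\alpha(\tfrac{x}{\e},\tfrac{x}{\e}+z)$ is a rescaled $Y$-periodic function, use periodicity to recenter, integrate in $z$ by dominated convergence, and substitute $\yh=y+z$ --- exactly the paper's scheme, as are the Minkowski/H\"older bounds for $\rho^{-1}B_L$ and $\rho^{-1}B_S$. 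Two points, however, need attention. First, a bookkeeping slip: after $\xh=x+\e z$ the factors $\e^{-2}\cdot(\e^{2}\,z\otimes z)\cdot(\e^{3}|z|^{3})^{-1}\cdot\e^{3}\,dz$ cancel exactly, leaving no surviving $\e^{-2}$; your claim of a leftover factor ``handled'' by integrability of $|z|^{-1}$ is incoherent, although the integrand you then actually use (with no prefactor, as in \eqref{Ae_S1_1}) is the correct one.

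Second, and this is the genuine gap, your part (b) drops the domain cutoff: the operator integrates over $H_{\e\delta}(x)\cap\Omega$, so the rescaled integrand carries $\chi_{\scriptscriptstyle\Omega}(x+\e z)$, and after your translation $x\mapsto x-\e z$ the object paired with $u^{\e}$ is $\chi_{\scriptscriptstyle\Omega}(r-\e z)\,\alpha(\tfrac{r}{\e}-z,\tfrac{r}{\e})\,\psi_1(\tfrac{r}{\e}-z)\,\psi_2(r-\e z)$, which is \emph{not} a fixed admissible test function: both the slow arguments and the discontinuous indicator are shifted by the $\e$-dependent amount $\e z$. Since $u^{\e}$ converges only weakly/two-scale, you cannot simply ``translate back''; that step is precisely the assertion requiring proof. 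The paper resolves it (equations \eqref{AS1ue_fix_1}--\eqref{AS1ue_fix_3_1}) by approximating $\chi_{\scriptscriptstyle\Omega}$ by smooth $\zeta_n\rightarrow\chi_{\scriptscriptstyle\Omega}$ in $L^{p'}_{\ss{loc}}$, using H\"older and the uniform bound \eqref{linftytimebd} to make the $\chi_{\scriptscriptstyle\Omega}-\zeta_n$ contribution small uniformly in $\e$, noting that for fixed $n$ the smooth shifted factor $\zeta_n(r-\e z)\psi_2(r-\e z)$ converges uniformly to $\zeta_n(r)\psi_2(r)$ so that two-scale convergence applies with a genuinely fixed test function, and then taking the iterated limit $\lim_{n\rightarrow\infty}\lim_{\e\rightarrow 0}$; an analogous argument (or an $L^{p'}$ estimate on the $O(\e)$-measure symmetric difference of $\Omega$ and its translate) is needed in your write-up, both for the $\chi_{\scriptscriptstyle\Omega}$ factor and for the shifted slow variable in $\psi$. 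Relatedly, your use of $\tilde\alpha_z(\tfrac{x}{\e})\psi(x,\tfrac{x}{\e},t)$ as a test function should be justified by the extended admissible class of Proposition \ref{smooth_two_scale} (test functions merely $L^{p'}$ in $y$, continuous in $x$), since $\alpha$ is discontinuous in $y$; with these repairs your argument closes and coincides with the paper's proof.
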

\noindent The proof of this lemma is provided at the end of this subsection.

\begin{rem_no_numbers}
Results similar to Lemma \ref{two_scale_lemma} can be proven for other function spaces as well. The  space $L^{p}_{\ss{per}}(Y;\,C(\overline{\Omega})^3)$ in the statement 
of this lemma can, for example, be replaced with the function space $L^p_{\ss{per}}(Y;\,L^p(\Omega)^3)$ or by the function space $L^p(\Omega;\,C_{\ss{per}}(Y)^3)$, 
where $\frac{3}{2}<p<\infty$ in each of 
these spaces.
\end{rem_no_numbers}

Application of  Lemma (\ref{two_scale_lemma}) gives
\begin{eqnarray*}
&&\lim_{\e\rightarrow 0}\int_{\Omega\times \mathbb{R}^{+}} (K_{L}+K^{\e}_{S})u^{\e}(x,t)\cdot \psi\left(x,\frac{x}{\e},t\right)\,dx dt \\
&&=\int_{\Omega\times Y\times\mathbb{R}^{+}} (B_{L}+B_{S})u(x,y,t)\cdot \psi(x,y,t)\,dx dy dt.
\end{eqnarray*}
Thus (\ref{limit_ue_and_two_scale}) becomes
\begin{eqnarray}
\nonumber
&&\int_{\Omega\times Y\times \mathbb{R}^{+}} u(x,y,t)\cdot\partial^{2}_{t} \psi(x,y,t)\rho(y)\,dx dy dt -
\int_{\Omega\times Y} v_0(x,y)\cdot \psi(x,y,0)\rho(y)\,  dx dy\\
\nonumber
\label{two_scale_limit_integral_form}
&&+\int_{\Omega\times Y} u_0(x,y)\cdot\partial_{t} \psi(x,y,0)\rho(y)\,  dx dy\\
&&= \int_{\Omega\times Y\times\mathbb{R}^{+}} \left((B_{L}+B_{S})u(x,y,t)+b(x,y,t)\right)\cdot \psi(x,y,t)\,dx dy dt
\end{eqnarray}
We shall see from Lemma \ref{g-lemmma}, provided before the end of this subsection, that $u$ has two classical partial
derivatives with respect to $t$, for almost every $t$, and the initial conditions supplementing (\ref{two_scale_limit_integral_form}) are given by
\begin{equation}
\label{two_scale_ic}
u(x,y,0)=u_0(x,y),\;\; \partial_{t}u(x,y,0)=v_0(x,y).
\end{equation}
Thus by integrating by parts twice, equation (\ref{two_scale_limit_integral_form}) becomes
\begin{eqnarray}
\label{two_scale_limit_integral_form2}
\nonumber
&&\int_{\Omega\times Y\times \mathbb{R}^{+}} \rho(y)\partial^{2}_{t}u(x,y,t)\cdot \psi(x,y,t)\,dx dy dt\\
&&=\int_{\Omega\times Y\times\mathbb{R}^{+}} \left((B_{L}+B_{S})u(x,y,t)+b(x,y,t)\right)\cdot \psi(x,y,t)\,dx dy dt
\end{eqnarray}
Since this is true for any function $\psi\in C^{\infty}_{c}(\mathbb{R}^3\times Y\times \mathbb{R})^3$ for
which $\psi(x,y,t)$ is $Y$-periodic in $y$, we obtain that for almost every $x,y,$ and $t$
\begin{eqnarray}
\label{two_scale_limit_eq}
\partial^{2}_{t}u(x,y,t) = \rho^{-1}(y)B u(x,y,t) + \rho^{-1}b(x,y,t),
\end{eqnarray}
where $B=B_L+B_S$. It follows from Lemma \ref{two_scale_lemma} that $\rho^{-1}B$ is a bounded linear operator on $L^p_{\ss{per}}(Y;\, C(\overline{\Omega})^3)$, with $\frac{3}{2}<p<\infty$.
Therefore 
the initial value problem given by (\ref{two_scale_limit_eq}) and (\ref{two_scale_ic}),
interpreted as a second-order inhomogeneous abstract Cauchy problem defined on $L^p_{\ss{per}}(Y;\, C(\overline{\Omega})^3)$, with body force in $\mathcal{L}_p, \, \frac{3}{2}<p<\infty$. From the theory of semigroups  \cite{Pazy,Engel}
it follows that this problem has a unique solution $u(x,y,t)\in \mathcal{Q}_p, \, \frac{3}{2}<p<\infty$.

The following summarizes the results of this subsection.
\begin{theorem}
\label{two-scale-limit-eqn-thm}
Let $(u^{\e})$ be the sequence of solutions of (\ref{peridynamic1})-(\ref{peridynamic1ic2}) with initial data $u_0^\e=u_0(x,\frac{x}{\e})$, $v_0=v_0(x,\frac{x}{\e})$ and body force $b^\e(x,\frac{x}{\e},t)$ with $u_0$ and $v_0$ in $L^p_{per}(Y;C(\overline{\Omega})^3)$ and $b\in\mathcal{L}_p$.
Then \\ $\twoscale{u^{\e}}{u}$ and the periodic extension of $u(x,y,t)$ in the $y$ variable from $Y$ to $\mathbb{R}^3$ also denoted by  $u$ belongs to $\mathcal{Q}_p$, with
$\frac{3}{2}<p<\infty$,  and is the unique solution of
\begin{equation}
\label{two_scale_limit_full_form}
\renewcommand{\arraystretch}{1}
\begin{array}{lcl}
\rho(y)\partial_{t}^2 u(x,y,t) &=& \displaystyle\int_{H_{\gamma}(x)\cap\Omega} \!\!\!\!\!\lambda
\frac{(\xh-x)\otimes(\xh-x)}{\vert\xh-x\vert^{3}}\left(\int_{Y}u(\xh,y',t)\,dy'-u(x,y,t)\right)\,d\xh\\
&&\\
& &+ \displaystyle\int_{H_{\delta}(y)} \alpha(y,\yh)
\frac{(\yh-y)\otimes(\yh-y)}{\vert\yh-y\vert^{3}}\left(u(x,\yh,t)-u(x,y,t)\right)\,d\yh \\
&&\\
& &+ \,b(x,y,t),
\end{array}
\end{equation}
supplemented with initial conditions
\begin{eqnarray}
\label{two_scale_limit_full_formic1}
u(x,y,0) &=& u_0(x,y),\\
\label{two_scale_limit_full_formic2}
\partial_t u(x,y,0)&=&v_0(x,y).
\end{eqnarray}
\end{theorem}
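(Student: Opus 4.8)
The plan is to assemble into a single proof the pieces already laid out in this subsection, so the argument is largely a synthesis. First I would record the uniform bounds and extract two-scale limits: by Proposition \ref{existuniq_prop}(c) the sequences $(u^\e)$, $(\dot u^\e)$, $(\ddot u^\e)$ are bounded in $L^\infty([0,T];L^p(\Omega)^3)$, hence in $L^p(\Omega\times(0,T))^3$, and Theorem \ref{two_scale_thm_with_time} then furnishes a single subsequence along which $\twoscale{u^\e}{u}$, $\twoscale{\dot u^\e}{u^*}$, and $\twoscale{\ddot u^\e}{u^{**}}$ with $u,u^*,u^{**}\in L^p(\Omega\times Y\times[0,T])^3$; I extend $u$ periodically in $y$. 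Since $u_0,v_0\in L^p_{\ss{per}}(Y;C(\overline\Omega)^3)$ and $b\in\mathcal{L}_p$, Proposition \ref{smooth_two_scale} supplies the data convergences \eqref{four-case-conds}.

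Next I would derive the weak two-scale equation. Testing \eqref{pd} against $\psi(x,x/\e,t)$ for $\psi\in C^\infty_c(\mathbb{R}^3\times Y\times\mathbb{R})^3$ that is $Y$-periodic in $y$, integrating over $\Omega\times\mathbb{R}^+$, and integrating by parts twice in $t$, I pass to the limit $\e\to0$: the terms carrying the oscillating density $\rho(x/\e)$ and the body force converge by the test-function part of Proposition \ref{smooth_two_scale}, while the nonlocal terms $K_Lu^\e$ and $K^\e_Su^\e$ converge by Lemma \ref{two_scale_lemma}(a),(b), with limits $B_Lu$ and $B_Su$. This produces the integral identity \eqref{two_scale_limit_integral_form}.

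Then I would use Lemma \ref{g-lemmma} to upgrade $u$ to a function possessing two classical $t$-derivatives for a.e.\ $t$, so that $u^*=\partial_t u$ and $u^{**}=\partial_t^2 u$, and to read off the initial conditions \eqref{two_scale_ic}. Integrating by parts back in $t$ in \eqref{two_scale_limit_integral_form} and invoking the fundamental lemma of the calculus of variations over the dense class of admissible test functions yields the pointwise equation \eqref{two_scale_limit_eq}, which is \eqref{two_scale_limit_full_form}. By Lemma \ref{two_scale_lemma} the operator $\rho^{-1}B=\rho^{-1}(B_L+B_S)$ is bounded and linear on $L^p_{\ss{per}}(Y;C(\overline\Omega)^3)$ for $\tfrac32<p<\infty$, and $b\in\mathcal{L}_p$, so \eqref{two_scale_limit_eq}--\eqref{two_scale_ic} is a second-order inhomogeneous abstract Cauchy problem with bounded generator; the semigroup theory of \cite{Pazy,Engel} then delivers a unique solution $u\in\mathcal{Q}_p$. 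Uniqueness of this limit problem forces every two-scale convergent subsequence of $(u^\e)$ to have the same limit $u$, so the whole sequence two-scale converges, completing the proof.

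The main obstacle is Lemma \ref{two_scale_lemma}, the identification of the two-scale limits of the nonlocal operators. For the short-range operator the $\e^{-2}$ prefactor together with the $\e\delta$ horizon is what makes things balance: after the substitution $\hat x=x+\e z$ the operator is genuinely two-scale, and one must pass to the limit in a product of the oscillating kernel $\alpha(x/\e,x/\e+z)$ and the oscillating argument $u^\e(x+\e z)$, where the threshold $p>\tfrac32$ is exactly what reconciles the singular factor $(z\otimes z)/|z|^3$ with the admissible test-function space. For the long-range operator the kernel does not oscillate, yet one must still show its $v(\hat x)$-contribution produces the $y$-average $\int_Y u(\hat x,y',t)\,dy'$ rather than $u(\hat x,y,t)$; this averaging is the mechanism by which the macro- and micro-scales become coupled. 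The companion delicacy is Lemma \ref{g-lemmma}, namely that the two-scale limits commute with the $t$-derivatives in this setting, which is what legitimizes passing from the weak identity back to the strong Cauchy problem.
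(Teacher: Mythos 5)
Your proposal is correct and follows essentially the same route as the paper: uniform bounds plus two-scale compactness, passage to the limit in the weak formulation via Proposition \ref{smooth_two_scale} and Lemma \ref{two_scale_lemma}, recovery of time regularity and initial data via Lemma \ref{g-lemmma}, and uniqueness of the resulting abstract Cauchy problem for $\rho^{-1}B$ on $L^p_{\ss{per}}(Y;C(\overline{\Omega})^3)$ to upgrade subsequence convergence to convergence of the whole sequence. No gaps beyond the lemma proofs you correctly defer to the text.
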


We conclude this section by showing that $u$ is twice differentiable with respect to time and proving Lemma \ref{two_scale_lemma}.
\begin{lemma}
\label{g-lemmma}
Let $t\in[0,T]$ and define
\begin{equation}
\label{g}
g(x,y,t) = \int_{0}^{t}\int_{0}^{\tau}u^{**}(x,y,l)\,dl d\tau + t u^{*}(x,y,0)+u(x,y,0).
\end{equation}
Then $g$ is in $L^p(\Omega\times Y\times (0,T))^3$, twice differentiable with respect to $t$ almost everywhere, and satisfies
\begin{enumerate}
\item[(a)] For almost every $x,y$, and $t$, $g(x,y,t)=u(x,y,t)$, $\partial_{t}g(x,y,t)=u^{*}(x,y,t) $,\\
 and $\partial^{2}_{t}g(x,y,t)=u^{**}(x,y,t)$.
\item[(b)] For almost every $x$ and $y$
\begin{eqnarray*}
\label{two_scale_ic1}
g(x,y,0)=u(x,y,0)=u_0(x,y),\\
\label{two_scale_ic2}
\partial_{t}g(x,y,0) = u^{*}(x,y,0)= v_0(x,y).
\end{eqnarray*}
\end{enumerate}
\end{lemma}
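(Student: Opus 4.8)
The plan is to identify the continuous-in-time representatives of the two-scale limits $u$ and $u^{*}$ directly from the Newton--Leibniz identities satisfied by $u^{\e}$ itself. Since Proposition~\ref{existuniq_prop}(b) gives $u^{\e}\in C^{2}([0,T];L^{p}(\Omega)^{3})$ with $u^{\e}(x,0)=u_{0}(x,\xe)$ and $\dot u^{\e}(x,0)=v_{0}(x,\xe)$, integrating in time yields, for every $\e$ and a.e.\ $(x,t)$,
\begin{equation*}
u^{\e}(x,t)=u_{0}(x,\xe)+t\,v_{0}(x,\xe)+\int_{0}^{t}\!\!\int_{0}^{\tau}\ddot u^{\e}(x,l)\,dl\,d\tau,
\qquad
\dot u^{\e}(x,t)=v_{0}(x,\xe)+\int_{0}^{t}\ddot u^{\e}(x,l)\,dl.
\end{equation*}
First I would test both identities against an arbitrary $\psi\in\mathcal{J}$, integrate over $\Omega\times(0,T)$, and pass to the $\e\to 0$ limit. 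The terms $u_{0}(x,\xe)$ and $t\,v_{0}(x,\xe)$ two-scale converge to $u_{0}(x,y)$ and $t\,v_{0}(x,y)$ by Proposition~\ref{smooth_two_scale} (with $t$ as a parameter), and $u^{\e},\dot u^{\e}$ two-scale converge to $u,u^{*}$ by assumption, so everything reduces to commuting the limit with the time integrals.

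The crux --- and the step I expect to be the main obstacle --- is showing that $h\mapsto\int_{0}^{t}\!\int_{0}^{\tau}h(\cdot,l)\,dl\,d\tau$ and $h\mapsto\int_{0}^{t}h(\cdot,l)\,dl$ commute with two-scale convergence. For the double integral I would use Fubini to rewrite $\int_{0}^{t}\!\int_{0}^{\tau}h(l)\,dl\,d\tau=\int_{0}^{t}(t-l)h(l)\,dl$ and then interchange the $t$ and $l$ integrations once more, giving
\begin{equation*}
\int_{\Omega\times(0,T)}\!\Big(\int_{0}^{t}(t-l)\,\ddot u^{\e}(x,l)\,dl\Big)\psi\!\left(x,\xe,t\right)dx\,dt
=\int_{\Omega\times(0,T)}\!\ddot u^{\e}(x,l)\,\Psi\!\left(x,\xe,l\right)dx\,dl,
\end{equation*}
where $\Psi(x,y,l):=\int_{l}^{T}(t-l)\,\psi(x,y,t)\,dt$. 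Since $\psi\in\mathcal{J}$, the function $\Psi$ is continuous in $l\in[0,T]$ with values in $L^{p}_{\ss{per}}(Y;C(\overline{\Omega})^{3})$, hence $\Psi\in\mathcal{L}_{p}$; as $\ddot u^{\e}$ is bounded in $L^{p}(\Omega\times(0,T))^{3}$ (Proposition~\ref{existuniq_prop}(c)) and two-scale converges to $u^{**}$, relation~\eqref{twotestt} of Proposition~\ref{smooth_two_scale} applies, so the right-hand side tends to $\int_{\Omega\times Y\times(0,T)}u^{**}\Psi$, which equals $\int_{\Omega\times Y\times(0,T)}\big(\int_{0}^{t}\!\int_{0}^{\tau}u^{**}(x,y,l)\,dl\,d\tau\big)\psi(x,y,t)$ by the reverse Fubini. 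The single time integral is treated identically with $\Psi(x,y,l)=\int_{l}^{T}\psi(x,y,t)\,dt$.

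Combining these limits, and using that $\psi\in\mathcal{J}$ is arbitrary and two-scale limits are unique, I would obtain for a.e.\ $(x,y,t)$ that $u(x,y,t)=u_{0}(x,y)+t\,v_{0}(x,y)+\int_{0}^{t}\!\int_{0}^{\tau}u^{**}(x,y,l)\,dl\,d\tau$ and $u^{*}(x,y,t)=v_{0}(x,y)+\int_{0}^{t}u^{**}(x,y,l)\,dl$. Reading $u(x,y,0)=u_{0}(x,y)$ and $u^{*}(x,y,0)=v_{0}(x,y)$ as the values of these (continuous in $t$) representatives --- the one point requiring care, since a priori $u,u^{*}$ are only $L^{p}$ in $t$ --- the first formula is exactly $g$ from~\eqref{g}. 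The remaining assertions are then routine: Minkowski's integral inequality together with $u^{**}\in L^{p}(\Omega\times Y\times(0,T))^{3}$ and $u_{0},v_{0}\in L^{p}_{\ss{per}}(Y;C(\overline{\Omega})^{3})$ gives $g\in L^{p}(\Omega\times Y\times(0,T))^{3}$; by Fubini $u^{**}(x,y,\cdot)\in L^{1}(0,T)$ for a.e.\ $(x,y)$, so $t\mapsto g(x,y,t)$ is $C^{1}$ with $\partial_{t}g=\int_{0}^{t}u^{**}\,dl+v_{0}$, and by the Lebesgue differentiation theorem $\partial_{t}^{2}g=u^{**}$ for a.e.\ $t$; part~(a) is then the comparison of these with the two identities above, and part~(b) follows by setting $t=0$.
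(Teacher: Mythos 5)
Your argument is correct, but it follows a genuinely different route from the paper's. The paper never writes a representation formula for $u$: it tests the relations $\partial_t u^\e$, $\partial_t^2 u^\e$ against split test functions $\psi_1(x,y)\phi(t)$, integrates by parts in $t$ at the $\e$ level, and passes to the two-scale limit to obtain the distributional identities $\int u^*\phi = -\int u\,\dot\phi$ and $\int u^{**}\phi = \int u\,\ddot\phi$ for a.e.\ $(x,y)$; it then compares these with the explicitly differentiable $g$ (showing $\int(\partial_t g-u^*)\dot\phi=0$, etc.) to get part (a), and proves part (b) by a separate integration-by-parts argument with time-dependent test functions in which the boundary terms at $t=0$ identify $u(x,y,0)=u_0$ and $\partial_t u(x,y,0)=v_0$. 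You instead push the exact Taylor/Newton--Leibniz formula for $u^\e\in C^2([0,T];L^p(\Omega)^3)$ through the two-scale limit, transposing the time integrations onto the test function via Fubini so that $\Psi(x,y,l)=\int_l^T(t-l)\psi(x,y,t)\,dt\in\mathcal{L}_p$ and relation \eqref{twotestt} applies to $\ddot u^\e\stackrel{2}{\rightharpoonup}u^{**}$; this yields the explicit representations $u=u_0+t\,v_0+\int_0^t\!\int_0^\tau u^{**}$ and $u^*=v_0+\int_0^t u^{**}$, from which (a) and (b) drop out simultaneously. The ingredients are the same (boundedness and two-scale convergence of $u^\e,\dot u^\e,\ddot u^\e$, and the admissible-test-function facts of Proposition \ref{smooth_two_scale}), but your route buys a cleaner treatment of the point the paper glosses over: since $u$ and $u^*$ are a priori only $L^p$ in $t$, the values $u(x,y,0)$, $u^*(x,y,0)$ entering the definition of $g$ need the continuous-in-$t$ representatives, which your formulas produce directly, whereas the paper's deduction ``since $\partial_t g(x,y,0)=u^*(x,y,0)$ \dots'' tacitly presupposes such a trace; the paper's approach, on the other hand, is shorter in each individual step and avoids the Fubini transposition bookkeeping.
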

\begin{proof}
Part (a). Let $\psi_1(x,y)$ be in $C^{\infty}_c(\Omega\times Y)^3$ and $Y$-periodic in $y$, and let $\phi$ be in $C^{\infty}_c(\mathbb{R}^+)$.
Then by using integration by parts, we see that
\begin{eqnarray*}
\label{ut_eq_u*_1}
\nonumber
\int_{\Omega\times \mathbb{R}^+} \partial_t u^{\e}(x,t)\cdot \psi_1\left(x,\xe\right)\phi(t)\,dx dt=
-\int_{\Omega\times \mathbb{R}^+}  u^{\e}(x,t)\cdot \psi_1\left(x,\xe\right)\dot\phi(t)\,dx dt.
\end{eqnarray*}
Sending $\e$ to $0$ and  using the fact that, up to a subsequence, $\twoscale{\partial_t u^{\e}}{u^{*}}$, we obtain
\begin{eqnarray*}
\int_{\Omega\times Y \times\mathbb{R}^+} u^{*}(x,y,t)\cdot \psi_1\left(x,y\right)\phi(t)\,dx dy dt\\
=-\int_{\Omega\times Y\times\mathbb{R}^+}  u(x,y,t)\cdot \psi_1\left(x,y\right)\dot\phi(t)\,dx dy dt.
\end{eqnarray*}
Since this holds for every $\psi_1$ we conclude that
\begin{eqnarray}
\label{ut_eq_u*}
\int_{\mathbb{R}^+} u^{*}(x,y,t) \phi(t)\,dt=
-\int_{\mathbb{R}^+}  u(x,y,t) \dot\phi(t)\,dt,
\end{eqnarray}
for almost every $x$ and $y$ and for every $\phi\in C^{\infty}_c(\mathbb{R}^+)$. Similarly, by using the fact that, up to a subsequence,
$\twoscale{\partial^2_t u^{\e}}{u^{**}}$, we see that
\begin{eqnarray}
\label{utt_eq_u**}
\int_{\mathbb{R}^+} u^{**}(x,y,t) \phi(t)\, dt=
\int_{\mathbb{R}^+}  u(x,y,t)\ddot\phi(t)\, dt,
\end{eqnarray}
for almost every $x$ and $y$ and for every $\phi\in C^{\infty}_c(\mathbb{R}^+)$.
 We note that from
(\ref{g}) it is easy to see that $g$ is twice differentiable in $t$ almost everywhere and satisfies
\begin{eqnarray}
\label{g_t}
\partial_t g(x,y,t) &=& \int_{0}^{t}u^{**}(x,y,\tau)\, d\tau + u^{*}(x,y,0),\\
\label{g_tt}
\partial^2_t g(x,y,t) &=& u^{**}(x,y,t).
\end{eqnarray}
We will use these facts together with (\ref{ut_eq_u*}) and (\ref{utt_eq_u**}) to show that  $\partial_{t}g=u^{*}$ almost everywhere
and $g=u$ almost everywhere.

For $\phi\in C^{\infty}_c(\mathbb{R}^+)$, we integrate by parts using (\ref{g_tt}) and (\ref{utt_eq_u**}) to find that
\begin{eqnarray*}
\label{g_t_eq_u*}
\int_{\mathbb{R}^+}\partial_t g(x,y,t) \dot\phi(t)\,dt&=&  \int_{\mathbb{R}^+} u^{*}(x,y,t) \dot\phi(t)\,dt.
\end{eqnarray*}
Thus we obtain
\begin{eqnarray}
\label{g_t_eq_u*_1}
\int_{\mathbb{R}^+}\left(\partial_t g(x,y,t) - u^{*}(x,y,t)\right) \dot\phi(t)\,dt=0,
\end{eqnarray}
for every $\phi\in C^{\infty}_c(\mathbb{R}^+)$. Since  $\partial_t g(x,y,0)=u^{*}(x,y,0)$, we conclude from (\ref{g_t_eq_u*_1})
that $\partial_{t}g(x,y,t)=u^{*}(x,y,t)$ almost everywhere. Finally it easily follows from \eqref{ut_eq_u*}) that
\begin{eqnarray}
\label{g_eq_u_1}
\int_{\mathbb{R}^+}\left( g(x,y,t) - u(x,y,t)\right) \dot\phi(t)\,dt=0,
\end{eqnarray}
for every $\phi\in C^{\infty}_c(\mathbb{R}^+)$. Since  $ g(x,y,0)=u(x,y,0)$, we conclude from (\ref{g_eq_u_1})
that $g(x,y,t)=u(x,y,t)$ almost everywhere, completing the proof of Part (a).

Part (b). Let $\psi(x,y,t)$ be in $C^{\infty}_c(\Omega\times Y\times \mathbb{R})^3$ and $Y$-periodic in $y$.
Then on integrating by parts, we see that
\begin{eqnarray*}
\label{g_at_0_1}
\nonumber
\int_{\Omega\times \mathbb{R}^+} \partial_t u^{\e}(x,t)\cdot \psi\left(x,\xe,t\right)\,dx dt&=&
-\int_{\Omega\times \mathbb{R}^+}  u^{\e}(x,t)\cdot \partial_t\psi\left(x,\xe,t\right)\,dx dt \\
&&-\int_{\Omega}u^{\e}(x,0)\cdot \psi\left(x,\xe,0\right)\,dx.
\end{eqnarray*}
Sending $\e$ to $0$, we obtain
\begin{eqnarray}
\label{g_at_0_2}
\nonumber
\int_{\Omega\times Y \times\mathbb{R}^+}\!\! u^{*}(x,y,t)\cdot \psi\left(x,y,t\right)\,dx dy dt
&=&-\int_{\Omega\times Y\times\mathbb{R}^+} \!\! u(x,y,t)\cdot \partial_t\psi\left(x,y,t\right)\,dx dy dt\\
\nonumber
&& -\int_{\Omega\times Y}u_0(x,y)\cdot \psi\left(x,y,0\right)\,dx dy.\\
\end{eqnarray}
On the other hand, from Part (a), we see that
\begin{eqnarray}
\label{g_at_0_3}
\nonumber
\int_{\Omega\times Y \times\mathbb{R}^+}\!\! u^{*}(x,y,t)\cdot \psi\left(x,y,t\right)\,dx dy dt
&=& - \int_{\Omega\times Y \times\mathbb{R}^+} \!\!u(x,y,t) \cdot \partial_t\psi\left(x,y,t\right)\,dx dy dt\\
\nonumber
&&-\int_{\Omega\times Y} \!\!u(x,y,0) \cdot \psi\left(x,y,0\right)\,dx dy.\\
\end{eqnarray}
From (\ref{g_at_0_2}) and (\ref{g_at_0_3}) we obtain that
\begin{eqnarray*}
\label{g_at_0_4}
\int_{\Omega\times Y}\left(u_0(x,y) - u(x,y,0)\right)\cdot \psi\left(x,y,0\right)\,dx dy=0,
\end{eqnarray*}
for every $\psi$. Therefore
\[
u(x,y,0)=u_0(x,y),
\]
almost everywhere. Similarly we can show that
\[
\partial_t u(x,y,0)=v_0(x,y),
\]
almost everywhere, completing the proof of Part (b).
\end{proof}

\begin{proof}[Proof of Lemma \ref{two_scale_lemma}]
Part (a). We compute  the two-scale limits of $K_{L,1} u^{\e}$ and $K_{L,2} u^{\e}$ to show that as $\e \rightarrow 0$,
\begin{eqnarray}
\label{AL_2scale}
\twoscale{K_L u^{\e}(x,t)}{B_L u(x,y,t)}.
\end{eqnarray}
Let $\psi\in C^{\infty}_c(\mathbb{R}^{3}\times Y)^3$ such that $\psi(x,y)$ is $Y$-periodic in $y$,
and $\phi\in C^{\infty}_c(\mathbb{R}^{+})$. Then  from
the definition of $K_{L,1}$, equation (\ref{K_L1_0}), we see that
\begin{eqnarray}
\label{AL1we_1}
\nonumber
&&\int_{\Omega\times \mathbb{R}^{+}}K_{L,1}u^{\e}(x,t)\cdot \psi\left(x,\xe\right)\phi(t)\,dx dt\\
\nonumber
&& = \int_{\Omega\times \mathbb{R}^{+}}\int_{H_{\gamma}(x)\cap\Omega} \lambda
\frac{(\xh-x)\otimes(\xh-x)}{\vert\xh-x\vert^{3}}\; u^{\e}(\xh,t)\,d\xh \cdot \psi\left(x,\xe\right)\phi(t)\,dx dt,\\
\end{eqnarray}
Since $\twoscale{u^{\e}(x,t)}{u(x,y,t)}$, we obtain using Proposition \ref{two_scale_time_weak_convg} that, as
$\e\rightarrow 0$,
\begin{eqnarray}
\label{ue_2scale_t}
u^{\e}\rightarrow \int_Y u(x,y,t)\,dy\,\,\,\mbox{weakly in } L^p(\Omega\times (0,T))^3.
\end{eqnarray}
It follows from (\ref{ue_2scale_t}) that, for fixed $x$,
\begin{eqnarray}
\label{AL1we_2}
&&\lim_{\e\rightarrow 0}
\int_{\mathbb{R}^{+}}\int_{H_{\gamma}(x)}\chi_\Omega(\xh) \lambda
\frac{(\xh-x)\otimes(\xh-x)}{\vert\xh-x\vert^{3}}\; u^{\e}(\xh,t) \phi(t)\,d\xh dt\\
\nonumber
&& =\int_{\mathbb{R}^{+}}\int_{H_{\gamma}(x)} \chi_\Omega(\xh)\lambda
\frac{(\xh-x)\otimes(\xh-x)}{\vert\xh-x\vert^{3}} \left(\int_Y u(\xh,y',t)\,dy'\right)\phi(t)\,d\xh dt.
\end{eqnarray}
Here $\chi_\Omega$ is the characteristic function of $\Omega$, taking value $1$ for $\xh$ in $\Omega$ and zero outside.
Applying H\"older's inequality for $\frac{1}{p}+\frac{1}{p'}=1$ gives
\begin{eqnarray}
\label{justify_Lebesgues_1}
\nonumber
&&\left\vert\int_{H_{\gamma}(x)} \!\!\!\!\chi_\Omega(\xh)\lambda
\frac{(\xh-x)\otimes(\xh-x)}{\vert\xh-x\vert^{3}}\; u^{\e}(\xh,t)\,d\xh\right\vert\\
\nonumber
 && \leq
\lambda\;\left(\int_{H_  {\delta    }(x)}\chi_\Omega(\xh)  \frac{1}{\vert \xh - x\vert^{p'}}\,d\xh\right)^{1/p'}
\left(\int_{H_  {\delta    }(x)} \vert u^{\e}(\xh,t )\vert^{p} \,d\xh\right)^{1/p}\\
&& \leq
\lambda \;\left(\int_{H_  {\delta    }(x)}  \frac{1}{\vert \xh - x\vert^{p'}}\,d\xh\right)^{1/p'}
\Vert u^{\e}\Vert_{L^{\infty}([0,T];\,L^p(\Omega)^3)},\hbox{ for almost every $t\in [0,T]$}.
\end{eqnarray}
We note that the integral on the right hand side of the last inequality is finite for $p'<3$. From Proposition \ref{existuniq_prop}, $\Vert u^{\e}\Vert_{L^{\infty}([0,T];\,L^p(\Omega)^3)}$ is bounded. Thus
from (\ref{AL1we_2}), and (\ref{justify_Lebesgues_1}) and by using Lebesgue's dominated convergence
theorem, we conclude that the convergence of the sequence of functions in (\ref{AL1we_2}) is not only point-wise in $x$ convergence
but also strong
in $L^p(\Omega)^3$, with $\frac{3}{2}<p<\infty$. Therefore from Proposition \ref{strongvstwoscale} and (\ref{AL1we_2}) it follows that the limit of (\ref{AL1we_1}) as
$\e\rightarrow 0$ is given by
\begin{eqnarray}
\label{AL1ue_limit}
\nonumber
&&\!\!\!\!\!\!\!\!\!\lim_{\e\rightarrow 0} \int_{\Omega\times \mathbb{R}^{+}}K_{L,1}u^{\e}(x,t)\cdot \psi\left(x,\xe\right)\phi(t)\,dx dt\\
\nonumber
&&\!\!\!\!\!\!\!\!\! = \int_{\Omega\times \mathbb{R}^{+}\times Y}\,B_{L,1}u(x,y,t)\cdot \psi\left(x,y\right)\phi(t)\,dxdtdy,\\
\end{eqnarray}
where
\begin{eqnarray}
\label{B_L_1}
B_{L,1}u(x,y,t)=\int_{H_{\gamma}(x)\cap\Omega} \!\!\!\lambda
\frac{(\xh-x)\otimes(\xh-x)}{\vert\xh-x\vert^{3}}\left(\int_Y u(\xh,y',t)\,dy'\right)\,d\xh
\end{eqnarray}
depends only on $(x,t)$ and is constant in $y$.
Next we evaluate the two-scale limit of $K_{L,2} u^{\e}$. We recall from (\ref{A_L2_0}) that
\begin{eqnarray}
\label{AL2ue_1}
K_{L,2} u^{\e}(x,t) = \int_{H_{\gamma}(x)\cap\Omega} \lambda
\frac{(\xh-x)\otimes(\xh-x)}{\vert\xh-x\vert^{3}}\,d\xh \; u^{\e}(x,t),
\end{eqnarray}
from which immediately follows that as $\e \rightarrow 0$,
\begin{eqnarray}
\label{AL2ue_limit}
\twoscale{K_{L,2} u^{\e}}{\int_{H_{\gamma}(x)\cap\Omega} \lambda
\frac{(\xh-x)\otimes(\xh-x)}{\vert\xh-x\vert^{3}}\,d\xh \; u(x,y,t)}\equiv B_{L,2}u(x,y,t).
\end{eqnarray}
The result (\ref{AL_2scale}) follows on combining equations  (\ref{AL1ue_limit}) and (\ref{AL2ue_limit})
and writing $B_L=B_{L,1}-B_{L,2}$.
It is evident that $\rho^{-1}B_L$ is a linear operator on the Banach space  $L^p_{\ss{per}}(Y;\,C(\overline{\Omega})^3)$.   
To show boundedness we show that $\rho^{-1}B_{L,1}$ and $\rho^{-1}B_{L,2}$ are bounded operators on $L^p_{\ss{per}}(Y;\,C(\overline{\Omega})^3)$.
For $w$ in $L^p_{\ss{per}}(Y;\,C(\overline{\Omega})^3)$ we write $\xh=x+\xi$ and 
\begin{eqnarray}
\label{B_L_1_Bound}
&&\Vert \rho^{-1}B_{L,1}w\Vert_{L^p_{\ss{per}}(Y;\,C(\overline{\Omega})^3))}\\
&&=\left(\int_{Y}\left(\rho^{-1}(y)\sup_{x\in\Omega}\left\vert\int_{H_\gamma(0)}\chi_\Omega(x+\xi)\lambda\frac{\xi\otimes\xi}{|\xi|}\int_Y\,w(x+\xi,y')dy'd\xi\right\vert\right)^pdy\right)^{1/p}\nonumber\\
&&\leq\left(\int_{Y}\left(\rho^{-1}(y)\int_{H_\gamma(0)}\frac{\lambda}{|\xi|}\int_Y\,\sup_{x\in\Omega}|\chi_\Omega(x+\xi)w(x+\xi,y')|dy'd\xi\right)^pdy\right)^{1/p}\nonumber\\
&&\leq\lambda\int_{H_\gamma(0)}|\xi|^{-1}d\xi\left(\int_Y\,\int_Y\left(\rho^{-1}(y)\sup_{\xh\in\Omega}|w(\xh,y')|dy\right)^pdy'\right)^{1/p}\nonumber\\
&&\leq\lambda\int_{H_\gamma(0)}|\xi|^{-1}d\xi\Vert\rho^{-1}\Vert_{L^p(Y)}\Vert w\Vert_{L^p_{\ss{per}}(Y;\,C(\overline{\Omega})^3)},\nonumber
\end{eqnarray}
where the second inequality follows from Minkowski's inequality and it follows that $\rho^{-1}B_{L,1}$ is bounded. It is evident from the definition of $B_{L,2}$
that $\rho^{-1}B_{L,2}$ is a bounded operator on $L^p_{\ss{per}}(Y;\,C(\overline{\Omega})^3)$.

Part (b). Since $K^{\e}_S=K^{\e}_{S,1}-K^{\e}_{S,2}$, we will compute  the two-scale limits of $K^{\e}_{S,1} u^{\e}$
and $K^{\e}_{S,2} u^{\e}$, to show that as $\e \rightarrow 0$,
\begin{eqnarray}
\label{AS_2scale}
\twoscale{K^{\e}_S u^{\e}(x,t)}{B_S u(x,y,t)}.
\end{eqnarray}
Let $\psi(x,y,t)=\psi_2(x) \psi_1(y) \phi(t)$,
where $\psi_2\in C^{\infty}_c(\mathbb{R}^{3})$, $\psi_1\in C^{\infty}_{per}(Y)^3$, and $\phi\in C^{\infty}_c(\mathbb{R}^{+})$.
Then  by using  (\ref{Ae_S1_1}), replacing $v(x)$ with $u^{\e}(x,t)$, we have
\begin{eqnarray}
\label{AS1ue_1}
\nonumber
&&\int_{\Omega\times \mathbb{R}^{+}}K^{\e}_{S,1}u^{\e}(x,t)\cdot \psi\left(x,\xe,t\right)\,dx dt\\
\nonumber
&& = \int_{\Omega\times \mathbb{R}^{+}}\int_{H_  {\delta    }(0)} \chi_\Omega(x+\e z)\alpha\left(\xe,\xe+ z\right)
\frac{z\otimes z}{\vert z\vert^{3}}\; u^{\e}(x+\e z,t)\,dz \cdot \psi\left(x,\xe,t\right)\,dx dt,\\
\end{eqnarray}
where $\chi_{\Omega}$ denotes the indicator function of $\Omega$.
 Thus after a change in the order of integration in the right hand side of equation (\ref{AS1ue_1}), we see that
\begin{eqnarray}
\label{AS1ue_2}
\nonumber
&&\!\!\!\!\!\int_{\Omega\times \mathbb{R}^{+}}K^{\e}_{S,1}u^{\e}(x,t)\cdot \psi\left(x,\xe,t\right)\,dx dt\\
\nonumber
&&\!\!\!\!\! = \int_{H_  {\delta    }(0)}\frac{1}{|z|^3}\int_{\Omega\times \mathbb{R}^{+}} \chi_\Omega(x+\e z)
\alpha\left(\xe,\xe+ z\right)u^{\e}(x+\e z,t)\!\cdot\! z \;\psi_1\left(\xe\right)\!\cdot\! z\;\psi_2(x)\phi(t)\, dx dt dz.\\
\end{eqnarray}
Now we focus on evaluating the limit as $\e\rightarrow 0$ of the inner integral in (\ref{AS1ue_2}).
By  the change of
variables $r=x+\e z$ we obtain
\begin{eqnarray}
\label{AS1ue_fix_1}
\nonumber
&&\!\!\!\!\!\!\!\!\!
\int_{\Omega\times \mathbb{R}^{+}} \chi_\Omega(x+\e z)
\alpha\left(\xe,\xe+ z\right)
u^{\e}(x+\e z,t)\!\cdot\! z \;\psi_1\left(\xe\right)\!\cdot\! z\;\psi_2(x)\phi(t)\,dx dt\\
\nonumber
&&\!\!\!\!\!\!\!\!\!
=   \int_{\mathbb{R}^3\times \mathbb{R}^{+}} \chi_\Omega(r)\chi_{\Omega}(r-\e z)\,\alpha\left(\frac{r}{\e}-z,\frac{r}{\e}\right)
u^{\e}(r,t)\!\cdot\! z \;\psi_1\left(\frac{r}{\e}-z\right)\!\cdot\! z\;\psi_2(r-\e z)\phi(t)\,dr dt\\
&&\\
\nonumber
&&\!\!\!\!\!\!\!\!\!
:= a^{\e}(z),
\end{eqnarray}
We will show that for $z\in H_{\delta }(0)$,
\begin{eqnarray}
\label{AS1ue_3}
\nonumber
&&\lim_{\e\rightarrow 0}
a^{\e}(z)
= \int_{\Omega\times Y\times\mathbb{R}^{+}} \alpha\left(y-z,y\right)
u(r,y,t)\!\cdot\! z \;\psi_1\left(y-z\right)\!\cdot\! z\;\psi_2(r) \phi(t)\, dr dy dt.\\
\end{eqnarray}
To see this, we approximate $\chi_{\Omega}$ by smooth functions $\zeta_n$ such that as $n \rightarrow \infty$,
$\zeta_n\rightarrow \chi_{\Omega}$ in $L_{loc}^{p'}(\mathbb{R}^3)$, with $1/p+1/p'=1$. Then by adding and subtracting $\zeta_n(r-\e z)$
$\chi_{\Omega}(r-\e z)$ in (\ref{AS1ue_fix_1}), we see that
\begin{eqnarray}
\label{AS1ue_ae}
 \!\!\!\!\!\!\!\!\!\!\!\!\!\!\!\!\!\!
 a^{\e}(z)&=& a^{n,\e}_1(z)+a^{n,\e}_2(z),\\
\nonumber
\!\!\!\!\!\!\!\!\!\!\!\!\!\!\!\!\!\!
\mbox{where,}&&\\
\label{AS1ue_fix_2}
\nonumber
\!\!\!\!\!\!\!\!\!\!\!\!\!\!\!\!\!\!
a^{n,\e}_1(z) &:=&\int_{\mathbb{R}^3\times\mathbb{R}^{+}}\!\!\! \chi_\Omega(r)\left(\chi_{\Omega}(r-\e z)-\zeta_n(r-\e z)\right)\times\\
\!\!\!\!\!\!\!\!\!\!\!\!\!\!\!\!\!\!
&&\alpha\left(\frac{r}{\e}-z,\frac{r}{\e}\right)
u^{\e}(r,t)\!\cdot\! z \;\psi_1\left(\frac{r}{\e}-z\right)\!\cdot\! z\;\psi_2(r-\e z) \phi(t)\,dr dt,\\
\nonumber
\!\!\!\!\!\!\!\!\!\!\!\!\!\!\!\!\!\!
a^{n,\e}_2(z) &:=&
\int_{\mathbb{R}^3\times\mathbb{R}^{+}} \chi_\Omega(r)\zeta_n(r-\e z)\times\\
\!\!\!\!\!\!\!\!\!\!\!\!\!\!\!
&&\alpha\left(\frac{r}{\e}-z,\frac{r}{\e}\right)
u^{\e}(r,t)\!\cdot\! z \;\psi_1\left(\frac{r}{\e}-z\right)\!\cdot\! z\;\psi_2(r-\e z)\phi(t)\,dr dt.
\label{AS1ue_fix_3}
\end{eqnarray}
From Proposition \ref{existuniq_prop}, 
\begin{eqnarray}
\sup_{\e>0}\Vert u^{\e}\Vert_{L^{\infty}([0,T];\,L^p(\Omega)^3)}\leq \infty
\label{linftytimebd}
\end{eqnarray}
So from (\ref{AS1ue_fix_2}) and on application of H\"{o}lder's inequality, we see for some constants $C_1$ and $C_2$ that
\begin{eqnarray}
\label{AS1ue_fix_2_1}
 && \!\!\!\!\!\!\!\!\!\!\!\!\!\!\!|a^{n,\e}_1(z)|\leq
C_1\left(\int_{\mathbb{R}^3} \left\vert\chi_{\Omega}(r-\e z)-\zeta_n(r-\e z)\right\vert^{p'}\,dr\right)^{1/p'}\times\Vert u^\e\Vert_{L^\infty_{\ss{loc}}(\mathbb{R}_+;L^p(\Omega)^3)}\\
&& \!\!\!\!\!\!\!\!\!\!\!\!\!\!\!|a^{n,\e}_2(z)|\leq
C_2\Vert u^\e\Vert_{L^\infty_{\ss{loc}}(\mathbb{R}_+;L^p(\Omega)^3)}
\label{AS1ue_fix_2_1_b}
\end{eqnarray}
so  there is a constant $C$ such that $|a^\e(z)|<C$ for $\e>0$.
On the other hand, the second factor on the right hand side of (\ref{AS1ue_fix_2_1}) goes to zero uniformly in $\e$ as $n\rightarrow \infty$ and we conclude that for all $\e>0$ and
$z\in H_  {\delta    }(0)$,
\begin{eqnarray}
\label{AS1ue_fix_2_3}
\lim_{n\rightarrow \infty} a^{n,\e}_1(z)=0.
\end{eqnarray}
Now for $n$ fixed we see that as $\e\rightarrow 0$,
$
\zeta_n(r-\e z) \psi_2(r-\e z)\rightarrow \zeta_n(r) \psi_2(r)
$
uniformly. Therefore, we see from  (\ref{AS1ue_fix_3}) that
\begin{eqnarray}
\label{AS1ue_fix_3_1}
\nonumber
&&\lim_{\e\rightarrow 0} a^{n,\e}_2(z)\\
\nonumber
&&=\lim_{\e\rightarrow 0}
\int_{\mathbb{R}^3\times\mathbb{R}^{+}} \chi_\Omega(r)\zeta_n(r)\;\alpha\left(\frac{r}{\e}-z,\frac{r}{\e}\right)
u^{\e}(r,t)\!\cdot\! z \;\psi_1\left(\frac{r}{\e}-z\right)\!\cdot\! z\;\psi_2(r)\phi(t)\,dr dt\\
\nonumber
&& =\int_{ \Omega\times Y\times\mathbb{R}^{+}} \zeta_n(r)\alpha\left(y-z,y\right)
u(r,y,t)\!\cdot\! z \;\psi_1\left(y-z\right)\!\cdot\! z\;\psi_2(r)\phi(t)\,dr dy dt,\\
\end{eqnarray}
where in  the last step the fact that $(u^{\e})_{\e>0}$ two-scale converges to $u(r,y,t)$ was used. By taking
the limit as $n\rightarrow \infty$ in (\ref{AS1ue_fix_3_1}), we obtain
\begin{eqnarray}
\label{AS1ue_fix_3_2}
\nonumber
&&\!\!\!\!\!\!\!\!\!\!\!\!\!
\lim_{n\rightarrow \infty}\lim_{\e\rightarrow 0} a^{n,\e}_2(z)\\
&&\!\!\!\!\!\!\!\!\!\!\!\!\!
=\int_{\Omega\times Y\times\mathbb{R}^{+}}\!\!\!\!
 \alpha\left(y-z,y\right)
u(r,y,t)\!\cdot\! z \;\psi_1\left(y-z\right)\!\cdot\! z\;\psi_2(r) \phi(t)\, dr dy dt.
\end{eqnarray}
Equation (\ref{AS1ue_3}) now follows from  (\ref{AS1ue_fix_2_3}) and (\ref{AS1ue_fix_3_2})    since
\begin{eqnarray}
\label{AS1ue_fix_4}
\nonumber
\lim_{\e\rightarrow 0} a^{\e}(z) =
\lim_{n\rightarrow \infty}\lim_{\e\rightarrow 0}\, (a^{n,\e}_1(z)+a^{n,\e}_2(z)).
\end{eqnarray}

From (\ref{AS1ue_2}) and (\ref{AS1ue_3}), and by using Lebesgue's dominated convergence theorem applied to the sequence $(a^\e(z))_{\e>0}$, we obtain
\begin{eqnarray}
\label{AS1ue_4}
\nonumber
&&\!\!\!\!\!
\lim_{\e\rightarrow 0}\int_{\Omega\times \mathbb{R}^{+}}K^{\e}_{S,1}u^{\e}(x,t)\cdot \psi\left(x,\xe,t\right)\,dx dt\\
\nonumber
&&\!\!\!\!\!
 =\int_{H_  {\delta    }(0)}\frac{1}{|z|^3} \int_{\Omega\times Y\times\mathbb{R}^{+}}\!\!\!\!
\alpha\left(y-z,y\right)
u(r,y,t)\!\cdot\! z \;\psi_1\left(y-z\right)\!\cdot\! z\;\psi_2(r) \phi(t)\, dr dy dt dz\\
\nonumber
&&\!\!\!\!\!
 = \int_{\Omega\times\mathbb{R}^{+}}
 \int_{H_{\delta}(0)}\frac{1}{|z|^3}\int_{Y} \alpha\left(y-z,y\right)
u(r,y,t)\!\cdot\! z \;\psi_1\left(y-z\right)\!\cdot\! z\, dy dz \;\psi_2(r) \phi(t) dr dt, \\
\end{eqnarray}
where we have changed the order of integration in the last step.
After shifting the domain of integration in the inner integral of the right hand side of equation (\ref{AS1ue_4}), we obtain
\begin{eqnarray}
\label{AS1ue_periodicity_0}
\nonumber
&&\int_{Y}\alpha\left(y-z,y\right)
u(r,y,t)\!\cdot\! z \;\psi_1\left(y-z\right)\!\cdot\! z\, dy\\
\nonumber
&&=\int_{Y-z}\alpha\left(y,y+z\right)
u(r,y+z,t)\!\cdot\! z \;\psi_1\left(y\right)\!\cdot\! z\, dy\\
&&=\int_{Y}\alpha\left(y,y+z\right)
u(r,y+z,t)\!\cdot\! z \;\psi_1\left(y\right)\!\cdot\! z\, dy,
\end{eqnarray}
where in the last step the fact that the integrand is $Y$-periodic in $y$ was used.
Substituting (\ref{AS1ue_periodicity_0}) in equation (\ref{AS1ue_4}), then by changing the order of integration we obtain
\begin{eqnarray}
\label{AS1ue_5}
\nonumber
&&\!\!\!\!\!
\lim_{\e\rightarrow 0}\int_{\Omega\times \mathbb{R}^{+}}K^{\e}_{S,1}u^{\e}(x,t)\cdot \psi\left(x,\xe,t\right)\,dx dt\\
\nonumber
&&\!\!\!\!\!
 = \int_{\Omega\times\mathbb{R}^{+}}
 \int_{Y}\int_{H_{\delta}(0)}\alpha\left(y,y+z\right)
\frac{z\otimes z}{|z|^3}u(r,y+z,t) dz \cdot \psi_1(y) dy \;\psi_2(r) \phi(t) dr dt \\
\nonumber
&&\!\!\!\!\!
 = \int_{\Omega\times Y\times\mathbb{R}^{+}}
 B_{S,1}u(r,y,t)\cdot \psi(r,y,t) \, dr dy  dt, \\
\end{eqnarray}
where
\begin{eqnarray}
&&B_{S,1}u(x,y,t)=\int_{H_{\delta}(y)}\alpha\left(y,\yh\right)
\frac{(\yh-y)\otimes(\yh-y)}{\vert\yh-y\vert^{3}}\;u(x,\yh,t) d\yh
\label{B_S_1}
\end{eqnarray}
and $K^\e_{S,1}u^\e\stackrel{2}{\rightharpoonup}B_{S,1}u(x,y,t)$.

Next we evaluate the two-scale limit of $K^{\e}_{S,2} u^{\e}$. Let $\psi$ be a test function in $\mathcal{J}$. Then
by using (\ref{Ae_S2_1}), replacing $v(x)$ with $u^{\e}(x,t)$, we obtain
\begin{eqnarray}
\label{AS2ue_1}
\nonumber
&&\int_{\Omega\times \mathbb{R}^{+}}K^{\e}_{S,2}u^{\e}(x,t)\cdot \psi\left(x,\xe,t\right)\,dx dt\\
\nonumber
&& = \int_{\Omega\times \mathbb{R}^{+}}\int_{H_  {\delta    }(0)}\chi_\Omega(x+\e z) \alpha\left(\xe,\xe+ z\right)
\frac{z\otimes z}{\vert z\vert^{3}}\,dz\;  u^{\e}(x,t)\cdot \psi\left(x,\xe,t\right)\,dx dt.\\
\end{eqnarray}
The right hand side of (\ref{AS2ue_1}), after changing the order of integration, is equal to
\begin{eqnarray}
\label{AS2ue_2}
\int_{H_  {\delta    }(0)}b^\e(z) dz.
\end{eqnarray}
where $b^\e(z)$ is given by
\begin{eqnarray}
\label{AS2ue_2be}
b^\e(z)=\frac{1}{|z|^3}\int_{\Omega\times \mathbb{R}^{+}} \chi_\Omega(x+\e z) \alpha\left(\xe,\xe+ z\right)
u^{\e}(x,t)\!\cdot\! z \;\psi\left(x,\xe,t\right)\!\cdot\! z\, dx dt.
\end{eqnarray}
For future reference note that from Proposition \ref{existuniq_prop}, 
$\sup_{\e>0}\Vert u^{\e}\Vert_{L^{\infty}([0,T];\,L^p(\Omega)^3)} < \infty$ hence 
there is a constant $C$ such that the sequence $b^\e(z)$ is bounded above by
\begin{eqnarray}
|b^\e(z)|<C|z|^{-1}, \hbox{ for }\e>0.
\label{boundonb}
\end{eqnarray}
As before we approximate $\chi_\Omega$ by a sequence of smooth functions $\zeta_n$ such that $\zeta_n\rightarrow\chi_\Omega$ in $L^{p'}_{\ss{loc}}(\mathbb{R}^3)$
and write 
\begin{eqnarray}
b_n^\e(z)=\frac{1}{|z|^3}\int_{\Omega\times \mathbb{R}^{+}} \zeta_n(x+\e z) \alpha\left(\xe,\xe+ z\right)
u^{\e}(x,t)\!\cdot\! z \;\psi\left(x,\xe,t\right)\!\cdot\! z\, dx dt.
\label{smoothbe}
\end{eqnarray}

Next using the fact that $(u^{\e})_{\e>0}$ two-scale converges to $u(x,y,t)$, we see that for  $z\in H_{\delta}(0)$,
\begin{eqnarray}
\label{AS2ue_3}
\nonumber
&&\lim_{\e\rightarrow 0}b^\e(z)=\lim_{n\rightarrow\infty}\lim_{\e\rightarrow 0}b_n^\e(z)=
\frac{1}{|z|^3}\int_{\Omega\times Y\times \mathbb{R}^{+}} \alpha\left(y,y+ z\right)
u(x,y,t)\!\cdot\! z \;\psi\left(x,y,t\right)\!\cdot\! z\, dx dy dt.
\end{eqnarray}

From (\ref{AS2ue_1}), (\ref{AS2ue_2}) and (\ref{AS2ue_3}), and by using Lebesgue's dominated convergence theorem, we obtain
\begin{eqnarray}
\label{AS2ue_4}
\nonumber
&&\lim_{\e\rightarrow 0}\int_{\Omega\times \mathbb{R}^{+}}K^{\e}_{S,2}u^{\e}(x,t)\cdot \psi\left(x,\xe,t\right)\,dx dt\\
&& = \int_{H_  {\delta    }(0)}\frac{1}{|z|^3}\int_{\Omega\times Y\times \mathbb{R}^{+}} \alpha\left(y,y+ z\right)
u(x,y,t)\!\cdot\! z \;\psi\left(x,y,t\right)\!\cdot\! z\, dx dy dt dz
\end{eqnarray}
By changing the order of integration and then  using the change of variables  $\yh=y+z$, we conclude that
\begin{eqnarray}
\label{AS2ue_5}
&&\lim_{\e\rightarrow 0}\int_{\Omega\times \mathbb{R}^{+}}K^{\e}_{S,2}u^{\e}(x,t)\cdot \psi\left(x,\xe,t\right)\,dx dt
=
 \int_{\Omega\times Y\times \mathbb{R}^{+}}B_{S,2}u(x,y,t)\cdot \psi\left(x,y,t\right)\,dx dy dt,
\end{eqnarray}
where
\begin{eqnarray}
&&B_{S,2}u(x,y,t)=
\int_{H_  {\delta    }(y)} \alpha\left(y,\yh\right)
\frac{(\yh-y)\otimes (\yh-y)}{\vert \yh-y\vert^{3}}\,d\yh\;  u(x,y,t),
\label{B_S_2}
\end{eqnarray}
and we conclude that $K^\e_{S,2}u^\e(x,t)\stackrel{2}{\rightharpoonup} B_{S,2}u(x,y,t)$.
Equation (\ref{AS_2scale}) follows on writing $B_S=B_{S,1}-B_{S,2}$. 

The operator $\rho^{-1}B_S$ is  a bounded operator on $L^p_{\ss{per}}(Y;C(\overline{\Omega})^3)$. This  follows from bounds on $\rho^{-1}B_{S,1}$ and $\rho^{-1}B_{S,2}$. Given any $w$ in $L^p_{\ss{per}}(Y;C(\overline{\Omega})^3)$ an application of Minkowski's inequality to\\ $\Vert\rho^{-1}B_{S,1}w(x,y)\Vert_{L^p_{\ss{per}}(Y;C(\overline{\Omega})^3)}$ shows that $\rho^{-1}B_{S,1}$ is bounded. The boundedness of $\rho^{-1}B_{S,2}$ easily follows from its definition.

\end{proof}

\subsection{Strong Approximation of Local Fields in Heterogeneous Peridynamic Media}
\label{ch_strongapproximation}

In this section it is shown that a rescaling in the $y$ variable of solution of the two-scale problem delivers a strong approximation to
the solution $u^\e(x,t)$ of the form $u(x,y,t)$. This is stated in the following theorem.
\begin{theorem}
\label{strongapproximation}
Let $u(x,y,t)$ be the solution of the two-scale problem given in Theorem \ref{two-scale-limit-eqn-thm} then
\begin{eqnarray}
\lim_{\e\rightarrow 0}\Vert u^\e(x,t)-u(x,\frac{x}{\e},t)\Vert_{L^p(\Omega)^3}=0, 
\end{eqnarray}
for every $t$ in $[0,T]$ and $\frac{3}{2}<p<\infty$.
\end{theorem}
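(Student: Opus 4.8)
The plan is to work directly with the error $e^\e(x,t):=u^\e(x,t)-u(x,\tfrac{x}{\e},t)$. Since the restriction $w\mapsto w(\cdot,\tfrac{\cdot}{\e})$ maps $X:=L^p_{\ss{per}}(Y;C(\overline{\Omega})^3)$ boundedly into $L^p(\Omega)^3$ — by the tiling argument from the proof of Proposition \ref{existuniq_prop}(c), which uses that $\e=\tfrac1n$ is commensurate with the periods — and $u\in\mathcal{Q}_p$, the map $t\mapsto u(\cdot,\tfrac{\cdot}{\e},t)$ lies in $C^2([0,T];L^p(\Omega)^3)$ and we may evaluate the two–scale equation \eqref{two_scale_limit_full_form} at $y=\tfrac{x}{\e}$. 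Subtracting the result from \eqref{peridynamic1} and using that the data agree, $u^\e(\cdot,0)=u(\cdot,\tfrac{\cdot}{\e},0)$ and $\partial_t u^\e(\cdot,0)=\partial_t u(\cdot,\tfrac{\cdot}{\e},0)$, one finds that $e^\e$ solves $\partial_t^2e^\e=A^\e e^\e+\rho_\e^{-1}\tau^\e$ with zero initial data, where the consistency error is
\begin{equation*}
\tau^\e(x,t):=\big(K_L+K^\e_S\big)\big[u(\cdot,\tfrac{\cdot}{\e},t)\big](x)-\big((B_L+B_S)u(\cdot,\cdot,t)\big)(x,\tfrac{x}{\e}).
\end{equation*}
By the variation–of–parameters formula \eqref{ue_explicit} with vanishing data, $e^\e(t)=\sqrt{A^\e}^{\,-1}\int_0^t\sinh\!\big((t-\tau)\sqrt{A^\e}\big)\rho_\e^{-1}\tau^\e(\tau)\,d\tau$, and since $\Vert A^\e\Vert\leq M$ uniformly in $\e$ (Proposition \ref{existuniq_prop}(a)) the operator norm of $\sqrt{A^\e}^{\,-1}\sinh(s\sqrt{A^\e})$ is at most $M^{-1/2}\sinh(s\sqrt M)\leq M^{-1/2}\sinh(T\sqrt M)$ for $0\leq s\leq T$. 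Hence
\begin{equation*}
\sup_{t\in[0,T]}\Vert e^\e(t)\Vert_{L^p(\Omega)^3}\leq\frac{\sinh(T\sqrt M)}{\sqrt M}\Big(\max_{y\in Y}\rho^{-1}(y)\Big)\int_0^T\Vert\tau^\e(\tau)\Vert_{L^p(\Omega)^3}\,d\tau,
\end{equation*}
so it suffices to show $\int_0^T\Vert\tau^\e(\tau)\Vert_{L^p(\Omega)^3}\,d\tau\to0$ as $\e\to0$.

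Next I would fix $\tau\in[0,T]$ and prove $\Vert\tau^\e(\tau)\Vert_{L^p(\Omega)^3}\to0$. Writing $T^\e w:=(K_L+K^\e_S)[w(\cdot,\tfrac{\cdot}{\e})]-\big((B_L+B_S)w\big)(\cdot,\tfrac{\cdot}{\e})$, so $\tau^\e(\tau)=T^\e u(\cdot,\cdot,\tau)$, the first point is that $T^\e$ is bounded from $X$ to $L^p(\Omega)^3$ with a bound independent of $\e$: for $w\mapsto K^\e_S[w(\cdot,\tfrac{\cdot}{\e})]$ this follows from Minkowski's inequality applied after the change of variables $\hat x=x+\e z$ (as in \eqref{Ae_S1_1}) together with the tiling estimate of Proposition \ref{existuniq_prop}(c); for $w\mapsto K_L[w(\cdot,\tfrac{\cdot}{\e})]$ similarly; and for $w\mapsto\big((B_L+B_S)w\big)(\cdot,\tfrac{\cdot}{\e})$ it is Lemma \ref{two_scale_lemma} composed with the bounded restriction. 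Since the $Y$–periodic continuous functions on $\overline{\Omega}\times\mathbb{R}^3$ are dense in $X$, given $\eta>0$ I pick such a $\phi$ with $\Vert u(\cdot,\cdot,\tau)-\phi\Vert_X<\eta$, so that $\Vert\tau^\e(\tau)\Vert_{L^p}\leq\Vert T^\e\phi\Vert_{L^p}+C\eta$; it then remains to show $\Vert T^\e\phi\Vert_{L^p(\Omega)^3}\to0$ for such $\phi$.

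For smooth $\phi$ I would split $T^\e\phi$ into its short– and long–range parts. After the substitutions $\hat x=x+\e z$ and $\hat y=\tfrac{x}{\e}+z$ the short–range part equals, for $x\in\Omega$,
\begin{align*}
&\int_{H_\delta(0)}\alpha\!\left(\tfrac{x}{\e},\tfrac{x}{\e}+z\right)\frac{z\otimes z}{|z|^3}\,\chi_\Omega(x+\e z)\big(\phi(x+\e z,\tfrac{x}{\e}+z)-\phi(x,\tfrac{x}{\e}+z)\big)\,dz\\
&\qquad{}+\int_{H_\delta(0)}\alpha\!\left(\tfrac{x}{\e},\tfrac{x}{\e}+z\right)\frac{z\otimes z}{|z|^3}\big(\chi_\Omega(x+\e z)-1\big)\big(\phi(x,\tfrac{x}{\e}+z)-\phi(x,\tfrac{x}{\e})\big)\,dz.
\end{align*}
The first integral is bounded pointwise by $C\,\omega_\phi(\e\delta)$, with $\omega_\phi$ the uniform modulus of continuity of $\phi$ in its first variable, so it vanishes in $L^\infty(\Omega)$ and hence in $L^p(\Omega)$; the second is supported on $\{x\in\Omega:\operatorname{dist}(x,\partial\Omega)<\e\delta\}$, where it is uniformly bounded, and the measure of that set tends to $0$ as $\e\to0$ (a decreasing family of subsets of the bounded set $\Omega$ with empty intersection), so it vanishes in $L^p(\Omega)$ as well. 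For the long–range part the cancellation of the ``$K_{L,2}$'' and ``$B_{L,2}$'' pieces gives
\begin{equation*}
K_L[\phi(\cdot,\tfrac{\cdot}{\e})](x)-(B_L\phi)(x,\tfrac{x}{\e})=\int_{H_\gamma(x)\cap\Omega}\lambda\frac{(\hat x-x)\otimes(\hat x-x)}{|\hat x-x|^3}\,\psi^\e(\hat x)\,d\hat x,
\end{equation*}
where $\psi^\e(\hat x):=\phi(\hat x,\tfrac{\hat x}{\e})-\int_Y\phi(\hat x,y')\,dy'$. By Proposition \ref{smooth_two_scale} and the stationary version of Proposition \ref{two_scale_time_weak_convg}, $\psi^\e\rightharpoonup0$ weakly in $L^p(\Omega)^3$, while the kernel $\hat x\mapsto\lambda\frac{(\hat x-x)\otimes(\hat x-x)}{|\hat x-x|^3}\chi_{H_\gamma(x)}(\hat x)$ lies in $L^{p'}$ with norm independent of $x$ \emph{precisely because} $p'<3$, i.e.\ $p>\tfrac32$; so the integral tends to $0$ for each $x$, and being uniformly bounded in $x$ and $\e$ it tends to $0$ in $L^p(\Omega)^3$ by dominated convergence on the bounded set $\Omega$. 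Combining, $\Vert T^\e\phi\Vert_{L^p(\Omega)^3}\to0$, whence $\limsup_{\e}\Vert\tau^\e(\tau)\Vert_{L^p}\leq C\eta$ and, letting $\eta\to0$, $\Vert\tau^\e(\tau)\Vert_{L^p(\Omega)^3}\to0$ for every $\tau$.

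Finally, because $\Vert\tau^\e(\tau)\Vert_{L^p(\Omega)^3}=\Vert T^\e u(\cdot,\cdot,\tau)\Vert_{L^p}\leq C\Vert u(\cdot,\cdot,\tau)\Vert_X\leq C\sup_{\tau\in[0,T]}\Vert u(\cdot,\cdot,\tau)\Vert_X<\infty$ (as $u\in\mathcal{Q}_p\subset C([0,T];X)$), the dominated convergence theorem in $\tau$ gives $\int_0^T\Vert\tau^\e(\tau)\Vert_{L^p(\Omega)^3}\,d\tau\to0$, and the estimate on $e^\e$ in the first paragraph finishes the proof — in fact it yields convergence uniform in $t\in[0,T]$. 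I expect the main obstacle to be the long–range term: one must promote a merely weakly convergent, genuinely oscillating sequence $\psi^\e$ to strong $L^p$ convergence after integrating it against the singular kernel behaving like $\lambda|\xi|^{-2}$, which is exactly where the restriction $p>\tfrac32$ is used; tracking the boundary truncation $\chi_\Omega$ in the short–range term near $\partial\Omega$ is the remaining point requiring care.
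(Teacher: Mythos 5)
Your proposal is correct and follows the same overall strategy as the paper: you derive the evolution equation for the error $e^\e$ with zero initial data, represent $e^\e$ by the Duhamel/$\sinh$ series and use the uniform bound $\Vert A^\e\Vert\leq M$ from Proposition \ref{existuniq_prop}, reduce the problem to showing that the consistency forcing vanishes in $L^p(\Omega)^3$ for each fixed time, and finish with dominated convergence in time; indeed $\rho_\e^{-1}\tau^\e$ is exactly the paper's $d^\e$ of Theorem \ref{errorrhs}, your two short-range integrals and your long-range term being \eqref{d_S_1}, \eqref{d_S_2} and \eqref{d_L} (as printed, \eqref{d_S_2} has a typo --- the two arguments in the difference coincide --- and your second integral is its intended form). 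The one substantive difference is how the vanishing of the consistency term is proved: the paper works directly with $u(\cdot,\cdot,t)\in L^p_{\ss{per}}(Y;C(\overline{\Omega})^3)$, using Minkowski's inequality, the tiling estimate of Lemma \ref{estimatelemma} and pointwise dominated convergence for the short-range pieces, whereas you first establish the uniform-in-$\e$ boundedness of the consistency operator $T^\e$ from $L^p_{\ss{per}}(Y;C(\overline{\Omega})^3)$ to $L^p(\Omega)^3$ and then reduce, by density, to jointly continuous periodic $\phi$, for which the translation term is controlled by a modulus of continuity and the boundary term by the measure of an $\e\delta$-collar of $\partial\Omega$. The long-range term is treated identically in both arguments: weak convergence of the oscillating factor (Propositions \ref{smooth_two_scale} and \ref{two_scale_time_weak_convg}) paired against a kernel in $L^{p'}$ with $p'<3$, then bounded convergence over $\Omega$ --- this is precisely where $p>\tfrac{3}{2}$ enters, as you note. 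Your density route is somewhat more modular (the uniform bound on $T^\e$ replaces the case-by-case dominated-convergence arguments of Theorem \ref{errorrhs} and also supplies the time-continuity of the forcing needed to justify the Duhamel representation), at the cost of verifying that bound for each operator piece; the paper's direct argument avoids the density step but repeats essentially the same Minkowski/tiling estimates inside each dominated-convergence argument.
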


From the perspective of computational mechanics the numerical effort necessary to discretize and solve
for $u(x,y,t)$ becomes much less expensive than direct numerical simulation for $u^\e(x,t)$ when the length scale of the microstructure $\e$ is sufficiently small relative to the computational domain. In view of Theorem \ref{strongapproximation} the numerical computation of $u(x,y,t)$ and the subsequent rescaling  $y=\frac{x}{\e}$
provides a viable multiscale numerical methodology. This topic is pursued in a forthcoming paper.

\begin{proof}
We start by writing the dynamics for the rescaled function $u(x,\frac{x}{\e},t)$. Making the substitution $y=\frac{x}{\e}$ in \eqref{two_scale_limit_full_form} delivers the following initial value problem for $u(x,\frac{x}{\e},t)$:  
\begin{equation}
\label{two_scale_limit_full_form_epsilon}
\renewcommand{\arraystretch}{0.7}
\begin{array}{lcl}
\partial_{t}^2 u(x,\frac{x}{\e},t) &=& \displaystyle\rho^{-1}(\frac{x}{\e})\int_{H_{\gamma}(0)}\chi_\Omega(x+\xi) \lambda
\frac{\xi\otimes\xi}{\vert\xi\vert^{3}}\left(\int_{Y}u(x+\xi,y',t)\,dy'-u(x,\frac{x}{\e},t)\right)\,d\xi\\
&&\\
& &+ \rho^{-1}(\frac{x}{\e})\displaystyle\int_{H_{\delta}(0)}\chi_\Omega(x+\e z) \alpha(\frac{x}{\e},\frac{x}{\e}+z)
\frac{z\otimes z}{\vert z\vert^{3}}\left(u(x,\frac{x}{\e}+z,t)-u(x,\frac{x}{\e},t)\right)\,dz \\
&&\\
& &+ \,\rho^{-1}(\frac{x}{\e})b(x,\frac{x}{\e},t),
\end{array}
\end{equation}
with $u(x,\frac{x}{\e},0)=u_0(x,\frac{x}{\e})$ and $\partial_t u(x,\frac{x}{\e},0)=v_0(x,\frac{x}{\e})$.

We subtract \eqref{two_scale_limit_full_form_epsilon} from \eqref{ue_operator_eq} to arrive at
the differential equation for the difference $e^\e(x,t)=u^\e(x,t)-u(x,\frac{x}{\e},t)$ given by
\begin{eqnarray}
\label{difference}
\partial^2_t e^{\e}(x,t)=A_S^\e e^\e(x,t)+A_L^\e e^\e(x,t)+d^\e(x,t)
\end{eqnarray}
with the homogeneous initial conditions $e^\e(x,0)=0$ and $\partial_t e^\e(x,t)=0$. Here the forcing term $d^\e(x,t)$ is of the form 
$d^\e(x,t)=\rho^{-1}(\frac{x}{\e})\left(d_{S,1}^\e+d_{S,2}^\e+d_L^\e\right)$ where
\begin{eqnarray}
d_{S,1}^\e=\int_{H_\delta(0)}\chi_\Omega(x+\e z)\alpha\left(\frac{x}{\e},\frac{x}{\e}+z\right)\frac{z\otimes z}{|z|^3}\left(u(x+\e z,\frac{x}{\e}+z,t)-u(x,\frac{x}{\e}+z,t)\right)\,dz,
\label{d_S_1}\\
d_{S,2}^\e=-\int_{H_\delta(0)}(1-\chi_\Omega(x+\e z))\alpha\left(\frac{x}{\e},\frac{x}{\e}+z\right)\frac{z\otimes z}{|z|^3}\left(u(x,\frac{x}{\e}+z,t)-u(x,\frac{x}{\e}+z,t)\right)\,dz,
\label{d_S_2}\\
d_L^\e=\int_{H_\gamma(0)}\chi_\Omega(x+\xi)\lambda\frac{\xi\otimes \xi}{|\xi|^3}\left(u(x+\xi,\frac{x+\xi}{\e},t)-\int_Yu(x+\xi,y',t)\,dy'\right)\,d\xi
\label{d_L}
\end{eqnarray}
The forcing term $d^\e(x,t)$ is regular and vanishes as $\epsilon\rightarrow 0$, this is stated in the following theorem.
\begin{theorem}
\label{errorrhs}
The forcing term $d^\e(x,t)$ belongs to $C([0,T];L^p(\Omega)^3)$ and
the sequence $(d^\e)_{\e}$ is uniformly bounded for $0\leq t\leq T$ where
\begin{eqnarray}
\sup_{\e>0}\sup_{t\in[0,T]}\Vert d^\e(x,t)\Vert_{L^p(\Omega)^3}<\infty,\hbox{ for }\frac{3}{2}< p<\infty,
\label{boundext}
\end{eqnarray}
\begin{eqnarray}
\lim_{\e\rightarrow 0}\Vert d^\e(x,t)\Vert_{L^p(\Omega)^3}=0,\hbox{ for all }t\in [0,T]\hbox{ and }\frac{3}{2}<p<\infty.
\label{limitofd}
\end{eqnarray}
\end{theorem}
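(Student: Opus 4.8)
The plan is to estimate the three constituents of $d^\e=\rho^{-1}(\tfrac{x}{\e})\big(d_{S,1}^\e+d_{S,2}^\e+d_L^\e\big)$ separately, using throughout that $\rho^{-1}(\tfrac{x}{\e})\alpha(\tfrac{x}{\e},\tfrac{x}{\e}+z)\le\overline{\alpha}$ (with $\overline{\alpha}$ as in the proof of Proposition~\ref{existuniq_prop}), that $|z\otimes z|/|z|^3\le|z|^{-1}$ and $|\xi\otimes\xi|/|\xi|^3\le|\xi|^{-1}$ are integrable on $H_\delta(0)$ and $H_\gamma(0)$ respectively, and that the rescaling map $w\mapsto w(\cdot,\tfrac{\cdot}{\e})$ is bounded from $L^p_{\ss{per}}(Y;C(\overline{\Omega})^3)$ into $L^p(\Omega)^3$ with norm at most $L^{3/p}$, uniformly in $\e=\tfrac1n$, exactly as in \eqref{estn}--\eqref{initialestu_0}. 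Together with Minkowski's integral inequality these facts bound each of $\Vert\rho^{-1}d_{S,1}^\e(\cdot,t)\Vert_{L^p(\Omega)^3}$, $\Vert\rho^{-1}d_{S,2}^\e(\cdot,t)\Vert_{L^p(\Omega)^3}$, $\Vert\rho^{-1}d_L^\e(\cdot,t)\Vert_{L^p(\Omega)^3}$ by a fixed constant times $\sup_{t\in[0,T]}\Vert u(\cdot,\cdot,t)\Vert_{L^p_{\ss{per}}(Y;C(\overline{\Omega})^3)}$, which is finite since $u\in\mathcal{Q}_p$; this yields \eqref{boundext}. For the regularity statement, each explicit term in \eqref{d_S_1}--\eqref{d_L} depends continuously on $t$ in $L^p(\Omega)^3$ because $t\mapsto u(\cdot,\cdot,t)$ is continuous into $L^p_{\ss{per}}(Y;C(\overline{\Omega})^3)$, the rescaling and the nonlocal integral operators are bounded, and $t\mapsto\int_Y u(\cdot,y,t)\,dy$ is continuous into $C(\overline{\Omega})^3$; equivalently, $d^\e=\partial_t^2 e^\e-A_S^\e e^\e-A_L^\e e^\e$ by \eqref{difference}, with $e^\e\in C^2([0,T];L^p(\Omega)^3)$ by Proposition~\ref{existuniq_prop}(b) and boundedness of the rescaling.

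\textbf{The long--range term (the heart of \eqref{limitofd}).} Write $g^\e(x):=u(x,\tfrac{x}{\e},t)-\int_Y u(x,y',t)\,dy'$, so that $\rho^{-1}(\tfrac{x}{\e})d_L^\e(x,t)=\rho^{-1}(\tfrac{x}{\e})\,(K_{L,1}g^\e)(x)$ with $K_{L,1}$ as in \eqref{K_L1_0}. By Proposition~\ref{smooth_two_scale}, for fixed $t$ one has $u(x,\tfrac{x}{\e},t)\stackrel{2}{\rightharpoonup}u(x,y,t)$ (since $u(\cdot,\cdot,t)\in L^p_{\ss{per}}(Y;C(\overline{\Omega})^3)$), hence, as the weak limit of a two--scale convergent sequence, $u(\cdot,\tfrac{\cdot}{\e},t)\rightharpoonup\int_Y u(\cdot,y,t)\,dy$ weakly in $L^p(\Omega)^3$; the subtracted term being $\e$--independent, $g^\e\rightharpoonup0$ weakly in $L^p(\Omega)^3$. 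I then repeat the argument of the proof of Lemma~\ref{two_scale_lemma}(a), equations \eqref{AL1we_2}--\eqref{justify_Lebesgues_1}: for each fixed $x$ the kernel $\xh\mapsto\chi_{H_\gamma(x)\cap\Omega}(\xh)\,\lambda\,(\xh-x)\otimes(\xh-x)/|\xh-x|^3$ lies in $L^{p'}(\Omega)^{3\times3}$ precisely because $p'<3$, i.e.\ $p>\tfrac32$, so weak convergence gives $(K_{L,1}g^\e)(x)\to0$ pointwise, while H\"older's inequality gives $|(K_{L,1}g^\e)(x)|\le\lambda\big(\int_{H_\gamma(0)}|\zeta|^{-p'}d\zeta\big)^{1/p'}\sup_\e\Vert g^\e\Vert_{L^p(\Omega)^3}<\infty$; dominated convergence over the bounded set $\Omega$ then yields $\Vert K_{L,1}g^\e\Vert_{L^p(\Omega)^3}\to0$, and multiplication by the bounded factor $\rho^{-1}(\tfrac{x}{\e})$ disposes of this term.

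\textbf{The first short--range term.} Pointwise, $|\rho^{-1}(\tfrac{x}{\e})d_{S,1}^\e(x,t)|\le\overline{\alpha}\int_{H_\delta(0)}|z|^{-1}\chi_\Omega(x+\e z)\,|u(x+\e z,\tfrac{x}{\e}+z,t)-u(x,\tfrac{x}{\e}+z,t)|\,dz$, so by Minkowski's integral inequality it suffices to show $\sup_{|z|\le\delta}\Vert\chi_\Omega(\cdot+\e z)\big(u(\cdot+\e z,\tfrac{\cdot}{\e}+z,t)-u(\cdot,\tfrac{\cdot}{\e}+z,t)\big)\Vert_{L^p(\Omega)^3}\to0$. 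Introduce the $Y$--periodic oscillation function $\omega_t(h;y):=\sup\{|u(x,y,t)-u(x',y,t)|:x,x'\in\overline{\Omega},\ |x-x'|\le h\}$; it is measurable in $y$, dominated by $2\Vert u(\cdot,y,t)\Vert_{C(\overline{\Omega})^3}\in L^p(Y)$, and tends to $0$ as $h\to0$ for every $y$ with $u(\cdot,y,t)\in C(\overline{\Omega})^3$ (uniform continuity on the compact $\overline{\Omega}$), hence for a.e.\ $y$; so $\Vert\omega_t(h;\cdot)\Vert_{L^p(Y)}\to0$ by dominated convergence. Since $|u(x+\e z,y,t)-u(x,y,t)|\le\omega_t(\e\delta;y)$ whenever $x,x+\e z\in\overline{\Omega}$, the change of variables $y=\tfrac{x}{\e}+z$ and periodicity (as in \eqref{initialestu_0}) give $\Vert\chi_\Omega(\cdot+\e z)(u(\cdot+\e z,\tfrac{\cdot}{\e}+z,t)-u(\cdot,\tfrac{\cdot}{\e}+z,t))\Vert_{L^p(\Omega)^3}^p\le\int_\Omega\omega_t(\e\delta;\tfrac{x}{\e}+z)^p\,dx\le L^3\Vert\omega_t(\e\delta;\cdot)\Vert_{L^p(Y)}^p$, which $\to0$ uniformly in $|z|\le\delta$.

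\textbf{The boundary--layer term and conclusion.} The integrand of $d_{S,2}^\e(x,t)$ vanishes unless $x\in\Omega_\e:=\{x\in\Omega:\operatorname{dist}(x,\partial\Omega)<\e\delta\}$, whose Lebesgue measure tends to $0$. Given $\eta>0$, choose $\phi\in C(\overline{\Omega}\times Y)^3$, $Y$--periodic in $y$, with $\Vert u(\cdot,\cdot,t)-\phi\Vert_{L^p_{\ss{per}}(Y;C(\overline{\Omega})^3)}<\eta$ (density of $C(Y;C(\overline{\Omega})^3)$ in $L^p_{\ss{per}}(Y;C(\overline{\Omega})^3)$), and split $u(x,\tfrac{x}{\e}+z,t)-u(x,\tfrac{x}{\e},t)$ accordingly. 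The $\phi$--contribution to $\rho^{-1}d_{S,2}^\e$ is pointwise bounded by a constant times $\Vert\phi\Vert_\infty\mathbf{1}_{\Omega_\e}(x)$, hence has $L^p(\Omega)^3$ norm $\le C\Vert\phi\Vert_\infty|\Omega_\e|^{1/p}\to0$; the $(u-\phi)$--contribution is bounded, by the rescaling estimate of the previous paragraph applied to $u-\phi$, by $C'\eta$ uniformly in $\e$. Letting $\e\to0$ and then $\eta\to0$ gives $\Vert\rho^{-1}d_{S,2}^\e(\cdot,t)\Vert_{L^p(\Omega)^3}\to0$, and adding the three limits establishes \eqref{limitofd}. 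The main obstacle is the long--range term $d_L^\e$: unlike the short--range pieces, whose smallness comes from a vanishing spatial shift ($d_{S,1}^\e$) or a shrinking support ($d_{S,2}^\e$), the smallness of $d_L^\e$ is not pointwise but depends on upgrading the merely weak (two--scale average) convergence $g^\e\rightharpoonup0$ to strong $L^p$ convergence after application of the smoothing nonlocal operator $K_{L,1}$ — and it is precisely there that the hypothesis $p>\tfrac32$ enters.
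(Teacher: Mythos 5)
Your proof is correct and follows essentially the same route as the paper: term-by-term estimates via Minkowski's inequality and the rescaling bound of Lemma \ref{estimatelemma} for \eqref{boundext} and continuity in $t$, uniform continuity of $u(\cdot,y,t)$ on $\overline{\Omega}$ plus dominated convergence for $d_{S,1}^\e$, the vanishing boundary layer for $d_{S,2}^\e$, and for $d_L^\e$ the upgrade of the two-scale (weak) convergence of $u(x,\tfrac{x}{\e},t)$ to pointwise convergence of the nonlocal integral through the $L^{p'}$ kernel ($p'<3$, i.e.\ $p>\tfrac32$) followed by H\"older and dominated convergence. The only divergence is cosmetic: your modulus-of-continuity function $\omega_t$ replaces the paper's $f_\e,g_\e$ double dominated-convergence argument, and your density-plus-$|\Omega_\e|^{1/p}$ treatment of $d_{S,2}^\e$ supplies details the paper only sketches.
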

We provide the proof of Theorem \ref{errorrhs} at the end of this section.
Since $A^\e$ is a bounded linear operator on $L^p(\Omega)^3$  it follows from Theorem \ref{errorrhs} and Proposition \ref{existuniq_prop} that the solution $e^\e(x,t)$ is explicitly given  by
\begin{eqnarray}
e^{\e}(x,t) = \int_0^t  \sum_{n=0}^{\infty} \frac{(t-\tau)^{2n+1}}{(2n+1)!}\; (A^{\e})^n d^{\e}(x,\tau)\,d\tau.
\label{errorrepresentation}
\end{eqnarray}
Thus
\begin{eqnarray}
\nonumber
\Vert e^{\e}(\cdot,t)\Vert_{L^p(\Omega)^3} &\leq&    \int_0^t \sum_{n=0}^{\infty} \frac{(t-\tau)^{2n+1}}{(2n+1)!}\;
\Vert(A^{\e})^n\Vert \;\Vert d^{\e}(\cdot,\tau)\Vert_{L^p(\Omega)^3}\,d\tau\\
\label{norm_e_ineq}
&\leq& \int_0^t \frac{1}{\sqrt{M}} \sinh{\left(\sqrt{M}(t-\tau)\right)}\;\Vert d^{\e}(\cdot,\tau)\Vert_{L^s(\Omega)^3}\,d\tau
\end{eqnarray}
where in the second inequality we have used the fact that $A^\e$ is bounded above by a positive constant $M>0$ independent of $\e$.
In view of Theorem \ref{errorrhs} we can apply the Lebesgue dominated convergence theorem to the right most inequality of \eqref{norm_e_ineq} to conclude that $\lim_{\e\rightarrow 0}\Vert e^{\e}(\cdot,t)\Vert_{L^p(\Omega)^3}=0$ and Theorem \ref{strongapproximation} is proved.
\end{proof}

We conclude this section by proving Theorem \ref{errorrhs}.
The theorem is proved by showing that each component of $d^\e$ given by $\rho^{-1}_\e d_{S,1}^\e$, $\rho^{-1}_\e d_{S,2}^\e$, $\rho^{-1}_\e d_{S,3}^\e$ belong to $C([0,T];L^p(\Omega)^3)$
and satisfy \eqref{boundext} and \eqref{limitofd}. We begin by showing that $\rho^{-1}_\e d_{S,1}^\e$ satisfies \eqref{boundext} and \eqref{limitofd} and that $\rho^{-1}_\e d_{S,1}^\e$ belongs to $C([0,T];L^p(\Omega)^3)$.
In what follows we use the basic estimate stated in the following lemma.
\begin{lemma}
\label{estimatelemma}
For any subset $S$ of $\Omega$ and $v(x,y,t)$ in $C([0,T];L_{\ss{per}}^p(Y;C(\overline{\Omega})^3))$ there exists a fixed integer independent of $\e$ denoted by $L>0$ for which
\begin{eqnarray}
&&\left(\int_S|v(x,\frac{x}{\e},t)|^p\,dx\right)^{1/p}\leq\left(\int_{S}\sup_{x'\in\Omega}|v(x',\frac{x}{\e},t)|^p\,dx \right)^{1/p}\nonumber\\
&&\leq L^{3/p}\Vert v\Vert_{L^p_{\ss{per}}(Y;C(\overline{\Omega})^3)}\leq L^{3/p}\Vert v\Vert_{C([0,T];L^p_{\ss{per}}(Y;C(\overline{\Omega})^3))}.
\label{Norms}
\end{eqnarray}
\end{lemma}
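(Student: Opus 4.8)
The plan is to repeat, essentially verbatim, the periodic-tiling estimate already carried out in the proof of Proposition~\ref{existuniq_prop}(c); compare \eqref{estn}--\eqref{initialestu_0}. Fix $t\in[0,T]$. The first inequality is pointwise and trivial: for almost every $x\in\Omega$ we have $|v(x,\frac{x}{\e},t)|\le\sup_{x'\in\Omega}|v(x',\frac{x}{\e},t)|$, and since $v(\cdot,\cdot,t)$ lies in $L^p_{\ss{per}}(Y;C(\overline{\Omega})^3)$ it is a Carath\'eodory function, so $y\mapsto\sup_{x'\in\Omega}|v(x',y,t)|$ is measurable on $Y$ and hence $x\mapsto\sup_{x'\in\Omega}|v(x',\frac{x}{\e},t)|$ is measurable on $\Omega$; integrating over $S$ and taking $p$-th roots gives the first inequality.

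For the middle inequality, enclose $\overline{\Omega}$ in a cube $Q$ of integer side length $L$ (this $L$ depends only on $\Omega$, not on $\e$) and, using $\e=\frac{1}{n}$, translate $Q$ so that it is the disjoint union $Q=\bigcup_{i=1}^{L^3n^3}\e Y_i$ of translates of the rescaled period cell $\e Y$. The integrand $x\mapsto\sup_{x'\in\Omega}|v(x',\frac{x}{\e},t)|^p$ is $\e Y$-periodic, and since $S\subseteq\Omega\subseteq Q$, monotonicity of the integral gives
\[
\int_{S}\sup_{x'\in\Omega}|v(x',x/\e,t)|^p\,dx\;\le\;\sum_{i=1}^{L^3n^3}\int_{\e Y_i}\sup_{x'\in\Omega}|v(x',x/\e,t)|^p\,dx.
\]
On each cell the change of variables $y=\frac{x}{\e}$ (with Jacobian $\e^3=n^{-3}$) converts the $i$-th summand to $\e^3\int_Y\sup_{x'\in\Omega}|v(x',y,t)|^p\,dy=\e^3\,\Vert v(\cdot,\cdot,t)\Vert_{L^p_{\ss{per}}(Y;C(\overline{\Omega})^3)}^p$, which is independent of $i$; summing the $L^3n^3$ equal terms and using $L^3n^3\e^3=L^3$ yields the bound $L^3\,\Vert v(\cdot,\cdot,t)\Vert_{L^p_{\ss{per}}(Y;C(\overline{\Omega})^3)}^p$. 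Taking $p$-th roots gives the middle inequality with the $\e$-independent constant $L^{3/p}$.

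Finally, the last inequality is just the definition of the norm on $C([0,T];L^p_{\ss{per}}(Y;C(\overline{\Omega})^3))$: for the fixed $t$ one has $\Vert v(\cdot,\cdot,t)\Vert_{L^p_{\ss{per}}(Y;C(\overline{\Omega})^3)}\le\sup_{s\in[0,T]}\Vert v(\cdot,\cdot,s)\Vert_{L^p_{\ss{per}}(Y;C(\overline{\Omega})^3)}=\Vert v\Vert_{C([0,T];L^p_{\ss{per}}(Y;C(\overline{\Omega})^3))}$. There is no real obstacle in this lemma; the two points that require a word of care are the measurability of the supremum (handled by the Carath\'eodory property noted before Proposition~\ref{existuniq_prop}) and the commensurability of $Q$ with the period lattice, which is exactly why the hypothesis $\e=\frac{1}{n}$ was imposed at the outset.
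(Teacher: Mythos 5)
Your proof is correct and follows essentially the same route as the paper, which simply invokes the tiling argument already used for the estimate \eqref{initialestu_0} in the proof of Proposition \ref{existuniq_prop}(c): enclose $\Omega$ in an integer-side cube commensurate with the $\e$-periods, sum over the $L^3n^3$ cells, and change variables $y=\frac{x}{\e}$. Your added remarks on measurability of the supremum and on the final inequality being the definition of the $C([0,T];\cdot)$ norm are fine but not points where the paper's argument differs.
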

\begin{proof}
The proof is identical to the arguments used in the estimate \eqref{initialestu_0}.
\end{proof}


We begin by showing that $\rho^{-1}_\e d_{S,1}^\e$ satisfies \eqref{boundext} and \eqref{limitofd} and that $\rho^{-1}_\e d_{S,1}^\e$ belongs to $C([0,T];L^p(\Omega)^3)$.
Let $\overline{\alpha}=\max_{y,y'\in Y}\rho^{-1}(y)\alpha(y,y')$ and estimate 
\begin{eqnarray}
\Vert\rho^{-1}_\e d_{S,1}^\e\Vert_{L^p(\Omega)}&\leq&\left(\int_\Omega\left(\int_{H_\delta(0)}\chi_\Omega(x+\e z)\frac{\overline{\alpha}}{|z|}|u(x+\e z,\frac{x}{\e},t)-u(x,\frac{x}{\e},t)|\,dz\right)^p\,dx\right)^{1/p}
\nonumber\\
&\leq&\int_{H_\delta(0)}\frac{\overline{\alpha}}{|z|}\left(\int_\Omega\chi_\Omega(x+\e z)|u(x+\e z,\frac{x}{\e},t)-u(x,\frac{x}{\e},t)|^p\,dx\right)^{1/p}\,dz\nonumber\\
&\leq&\int_{H_\delta(0)}\frac{\overline{\alpha}}{|z|}\left(\int_\Omega\sup_{x'\in\Omega}\left\{\chi_\Omega(x'+\e z)|u(x'+\e z,\frac{x}{\e},t)-u(x',\frac{x}{\e},t)|\right\}^p\,dx\right)^{1/p}\,dz\nonumber\\
&\leq&L^{3/p}\int_{H_\delta(0)}\frac{\overline{\alpha}}{|z|}f_\e(z,t)\,dz,
\label{estfor_d_s_1}
\end{eqnarray}
where $f_\e(z,t)$ is given by
\begin{eqnarray}
f_\e(z,t)=\Vert\chi_\Omega(x'+\e z)(u(x'+\e z,\frac{x}{\e},t)-u(x',\frac{x}{\e},t))\Vert_{L^p_{\ss{per}}(Y;C(\overline{\Omega})^3)}.
\label{feps}
\end{eqnarray}
Here the second inequality in \eqref{estfor_d_s_1} follows from the Minkowski inequality and the last inequality
in \eqref{estfor_d_s_1} follows from Lemma \ref{estimatelemma}.
Next we show that $\lim_{\e\rightarrow 0}|f_\e(z,t)|=0$. To see this write
\begin{eqnarray}
g_\e(y,z,t)=\sup_{x\in\Omega}\left\{\chi_\Omega(x+ \e z)|u(x+\e z,y,t)-u(x,y,t)|\right\}
\label{g}
\end{eqnarray}
and note that 
\begin{itemize}
\item
$g_\e \rightarrow 0$ for almost every $y\in Y$, $t\in [0,T]$, and $z\in H_\delta(0)$,
\item
$0\leq g_\e(y,z,t)\leq 2\sup_{x\in\Omega}|u(x,y,t)|$,
\end{itemize}
and $\lim_{\e\rightarrow 0}|f_\e(z,t)|=0$ follows from the Lebesgue dominated convergence theorem since $u$ belongs to $L^p_{\ss{per}}(Y;C(\overline{\Omega})^3)$ for every $t\in[0,T]$.
Observe next that 
\begin{eqnarray}
\sup_{\e>0}|f_\e(z,t)|\leq 2\Vert u\Vert_{L^p_{\ss{per}}(Y;C(\overline{\Omega})^3)}\leq 2\Vert u\Vert_{C([0,T];L^p_{\ss{per}}(Y;C(\overline{\Omega})^3))}.
\label{estforf_e}
\end{eqnarray}
Hence we apply the Lebegue dominated convergence theorem again to find that 
\begin{eqnarray}
\lim_{\e\rightarrow 0}\Vert\rho^{-1}_\e d_{S,1}^\e\Vert_{L^p(\Omega)}=0
\label{vanising_d_S_1}
\end{eqnarray}
and application of \eqref{estforf_e} to the last line of \eqref{estfor_d_s_1} gives
\begin{eqnarray}
\sup_{t\in[0,T]}\sup_{\e>0}\Vert\rho^{-1}_\e d_{S,1}^\e\Vert_{L^p(\Omega)}<\infty.
\label{boundtimed_S_1}
\end{eqnarray}

Given $0\leq t<t'\leq T$ we apply Minkowski's inequality together with Lemma \ref{estimatelemma} to obtain the estimate 
\begin{eqnarray}
\Vert\rho^{-1}_\e d_{S,1}^\e(t)-\rho^{-1}_\e d_{S,1}^\e(t')\Vert_{L^p(\Omega)}&\leq&2\overline{\alpha}\left(\int_{H_\delta(0)}|z|^{-1}\,dz\right)
\Vert u(t)-u(t')\Vert_{L^p_{\ss{per}}(Y;C(\overline{\Omega})^3)}.
\label{continuity_d_S_1}
\end{eqnarray}
Since $u$ belongs to $C^2([0,T];L_{\ss{per}}^p(Y;C(\overline{\Omega})^3))$ the  estimate \eqref{continuity_d_S_1} implies that $d_{S,1}^\e(t)$ belongs to \\
$C([0,T];L_{\ss{per}}^p(Y;C(\overline{\Omega})^3))$.

Now we discuss the boundedness, continuity  and convergence of $\rho^{-1}d_{S,2}^\e$. 
The overall approach to demonstrating these properties for $\rho^{-1}d_{S,2}^\e$ is the same as before. 
Here we point out that the mechanism that drives $\rho^{-1}d_{S,2}^\e$ to zero with $\e$ is the point wise convergence  $1-\chi_\Omega(x+\e z)\rightarrow 0$ for every $x\in \Omega$. The norm bounds and continuity properties of $u(x,y,t)$ 
are then used as before to establish the continuity properties, boundedness and convergence of the sequence $(\rho^{-1}d_{S,2}^\e)_{\e}$.

The overall approach to demonstrating properties for the sequence $(\rho^{-1}d_{L}^\e)_{\e}$ is also the same, however
there are some distinctions that arise in the proof of convergence. In what follows we outline the proof of convergence pointing out that the continuity proof and bounds are established as before.
We begin noting that $u$ belongs to $\mathcal{Q}_p$ with $\frac{3}{2}<p<\infty$ hence from Proposition \ref{smooth_two_scale}
\begin{eqnarray}
u(x,\frac{x}{\e},t)\stackrel{2}{\rightharpoonup}u(x,y,t),
\label{testu}
\end{eqnarray}
and from Proposition \ref{two_scale_time_weak_convg} it follows that for any test function $\psi(x)\in L^{p'}(\Omega)$
with $\frac{1}{p'}+\frac{1}{p}=1$ that
\begin{eqnarray}
\int_{\Omega}\psi(x) u(x,\frac{x}{\e},t)\,dx\rightarrow\int_\Omega\psi(x)\int_Y u(x,y,t)\,dy\,dx, \hbox{ as }\e\rightarrow 0.
\label{weaku}
\end{eqnarray}
We write
\begin{eqnarray}
\Vert\rho^{-1}_\e d_{L}^\e\Vert_{L^p(\Omega)}&=&\left(\int_\Omega|h_\e(x)|^p\,dx\right)^{1/p},
\label{formula_norm_d_L}
\end{eqnarray}
where
\begin{eqnarray}
h_\e(x)&=&\int_{H_\gamma(0)}\chi_\Omega(x+\xi)\lambda\frac{\xi\otimes\xi}{|\xi|^3}\left(u(x+\xi,\frac{x+\xi}{\e},t)-\int_Y u(x+\xi,y',t)\,dy'\right)\,d\xi.
\label{h_e}
\end{eqnarray}
We apply \eqref{weaku} noting that  $\psi(\xi)=\chi_{\Omega}(x+\xi)\frac{\xi\otimes\xi}{|\xi|^3}$ belongs to $L^{p'}$ for $p'<3$ to find that 
\begin{eqnarray}
\lim_{\e\rightarrow 0}h_\e(x)=0.
\label{h_goesto_0}
\end{eqnarray}

Application of H\"older's inequality to the right hand side of \eqref{h_e} for $p'<3$ gives the upper bound
\begin{eqnarray}
|h_\e(x)|
&\leq&\lambda\left(\int_{H_\gamma(0)}|\xi|^{-p'}\,d\xi\right)^{1/p'}\left(\int_{H_\gamma(0)}\chi_\Omega(x+\xi)|u(x+\xi,\frac{x+\xi}{\e},t)|^p\,d\xi\right)^{1/p}\nonumber\\
&&+\left(\int_{H_\gamma(0)}|\xi|^{-p'}\,d\xi\right)^{1/p'}\left(\int_{H_\gamma(0)}\chi_\Omega(x+\xi)\left|\int_Y u(x+\xi,y',t)dy'\right|^p\,d\xi\right)^{1/p}.
\label{upperbdh_e}
\end{eqnarray}
Applying Lemma \ref{Norms} to the first term on the right hand side of \eqref{upperbdh_e}, Minkowski's inequality to the second term followed with H\"olders inequality  delivers
the inequality
\begin{eqnarray}
|h_\e(x)|\leq C\Vert h\Vert_{L^p_{\ss{per}}(Y;C(\overline{\Omega})^3)},
\label{stability_h}
\end{eqnarray}
where $C$ is a positive constant independent of $\e$.
From \eqref{h_goesto_0} and \eqref{stability_h} it now follows from the Lebesgue bounded convergence theorem that
\begin{eqnarray}
\lim_{\e\rightarrow 0}\Vert\rho^{-1}_\e d_{L}^\e\Vert_{L^p(\Omega)}=0.
\label{d_L_e_goesto_0}
\end{eqnarray}
The continuity and boundedness properties for $\rho^{-1}_\e d_{L}^\e$ follow along lines similar to the previous arguments.

\section{Homogenized Peridynamics}
\label{sec_macro_micro}

The strong approximation $u(x,\frac{x}{\e},t)$ admits a natural decomposition into a continuous macroscopic component and a possibly discontinuous fluctuating component. The macroscopic component
$u^H(x,t)$ is obtained by projecting out the spatial fluctuations 
and the corrector $r(x,\frac{x}{\e},t)$ containing the possibly discontinuous fluctuations is given by the remainder, i.e., 
\begin{eqnarray}
u(x,\frac{x}{\e},t)=u^H(x,t)+r(x,\frac{x}{\e},t),
\label{splitting}
\end{eqnarray}
where
\begin{eqnarray}
u^H(x,t)=\langle u\rangle\equiv\int_Y\,u(x,y,t)\,dy
\label{avg}
\end{eqnarray}
and
\begin{eqnarray}
r(x,\frac{x}{\e},t)=u(x,\frac{x}{\e},t)-u^H(x,t).
\label{fluct}
\end{eqnarray}

The weak convergence expressed by Proposition \ref{two_scale_time_weak_convg}
gives
\begin{eqnarray}
\lim_{\e\rightarrow 0}\frac{1}{|V|}\int_V\, u^\e(x,t)\,dx=\lim_{\e\rightarrow 0}\frac{1}{|V|}\int_V\,u(x,\frac{x}{\e},t)\,dx=\frac{1}{|V|}\int_V\,u^H(x,t)\,dx,\label{macroavg}
\end{eqnarray}
and
\begin{eqnarray}
\lim_{\e\rightarrow 0}\frac{1}{|V|}\int_V\, r(x,\frac{x}{\e},t)\,dx=0.
\label{microprojectout}
\end{eqnarray}

It is evident from \eqref{macroavg} that the macroscopic component $u^H$ tracks the average or upscaled behavior
of the actual field $u^\e$. Conversely the macroscopic or ``averaged'' observations of the actual deformation $u^\e$ will track the dynamics of $u^H$. Thus it is of compelling interest  to obtain an explicit evolution equation for $u^H$ in order to qualitatively account for observations made at macroscopic length scales.
In what follows we show that averaging the two-scale peridynamic equations over the $y$ variable delivers a coupled system for the macroscopic and microscopic components $u^H(x,t)$ and $r(x,y,t)$. This coupling is seen to impart a history dependence on the evolution of $u^H$. We express this memory effect explicitly by eliminating $r$
and recovering an integro-differential equation in both space and time for $u^H$. 

In what follows we set $u^H(t)=u^H(\cdot,t)$ and $r(t)=r(\cdot,t)$
and we denote spatial averages of fields $v(x,y,t)$ taken over the $y$ variable by $\langle v\rangle(t)\equiv\int_Y v(x,y,t)\,dy$. Let the constant $3\times 3$ matrix $K$ be defined by
\begin{eqnarray}
K=\lambda\int_{h_\gamma(0)}\frac{\xi\otimes\xi}{|\xi|^3}\,d\xi
\label{K}
\end{eqnarray}
and the coupled dynamics for the evolution of $u^H(t)$ and $r(t)$ is given by the following theorem. 
\begin{theorem}
\label{couple_dynam}
\begin{eqnarray}
\ddot{u}^H(t)&=&\langle\rho^{-1}\rangle K_L u^H(t)+\langle\rho^{-1}B_S r\rangle(t)-K\langle\rho^{-1}r\rangle(t)+\langle\rho^{-1} b\rangle(t),\label{M}\\
\ddot{r}(t)&=&\left(\rho^{-1}-\langle\rho^{-1}\rangle\right)K_L u^H(t)+\left(\rho^{-1}B_S r(t)-\langle\rho^{-1}B_S r\rangle(t)\right)\nonumber\\
&-&K\left(\rho^{-1} r(t)-\langle\rho^{1} r\rangle(t)\right)+\left(\rho^{-1}b(t)-\langle\rho^{-1} b\rangle(t)\right),
\label{c_d}
\end{eqnarray}
with initial conditions $u^H(0)=\langle u_0\rangle$, $\dot{u}^H(0)=\langle v_0\rangle$, $r(0)=u_0-\langle u_0\rangle$, and  $\dot{r}(0)=v_0-\langle v_0\rangle$.

\end{theorem}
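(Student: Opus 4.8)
The plan is to substitute the splitting $u = u^H + r$ underlying \eqref{splitting}--\eqref{fluct} (so that $\langle r\rangle=0$) into the two-scale evolution equation of Theorem \ref{two-scale-limit-eqn-thm}, and then read off \eqref{M} and \eqref{c_d} by averaging over $Y$ and subtracting. I would begin from the operator form
\begin{equation}
\partial_t^2 u(x,y,t) = \rho^{-1}(y)\bigl(B_L + B_S\bigr)u(x,y,t) + \rho^{-1}(y)\,b(x,y,t),
\label{plan_opform}
\end{equation}
which is \eqref{two_scale_limit_eq}; this holds in $L^p_{\ss{per}}(Y;C(\overline{\Omega})^3)$ for each $t$, since $u\in\mathcal{Q}_p$ and $\rho^{-1}B_L$, $\rho^{-1}B_S$ are bounded linear operators on that space by Lemma \ref{two_scale_lemma}.

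Two elementary structural facts do all the work. First, the short-range operator annihilates fields independent of $y$: $B_S u^H(x,y,t) = \int_{H_\delta(y)}\alpha(y,\yh)\frac{(\yh-y)\otimes(\yh-y)}{|\yh-y|^3}\bigl(u^H(x,t)-u^H(x,t)\bigr)\,d\yh = 0$, so by linearity $B_S u = B_S r$. Second, since $\int_Y u(\xh,y',t)\,dy' = u^H(\xh,t)$, the long-range operator splits as $B_L u(x,y,t) = K_{L,1}u^H(x,t) - K_{L,2}u(x,y,t)$, and the self-interaction operator $K_{L,2}$ in \eqref{K_L2_0} acts as multiplication by the constant matrix $K$ of \eqref{K} (change variables $\xi=\xh-x$); writing $u=u^H+r$ gives $B_L u = (K_{L,1}-K_{L,2})u^H - K_{L,2}r = K_L u^H - K r$. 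Substituting into \eqref{plan_opform} and using $\partial_t^2 u = \partial_t^2 u^H + \partial_t^2 r$ yields
\begin{equation}
\partial_t^2 u^H(x,t) + \partial_t^2 r(x,y,t) = \rho^{-1}(y)K_L u^H(x,t) - K\rho^{-1}(y)r(x,y,t) + \rho^{-1}(y)B_S r(x,y,t) + \rho^{-1}(y)b(x,y,t).
\label{plan_master}
\end{equation}

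To obtain \eqref{M} I would average \eqref{plan_master} over $y\in Y$. Because $u\in\mathcal{Q}_p$, the averaging $\langle\cdot\rangle=\int_Y\cdot\,dy$ commutes with $\partial_t^2$; moreover $\langle\partial_t^2 r\rangle = \partial_t^2\langle r\rangle = 0$ since $\langle r\rangle=0$, while $\langle\rho^{-1}K_L u^H\rangle = \langle\rho^{-1}\rangle K_L u^H$ and $\langle K\rho^{-1}r\rangle = K\langle\rho^{-1}r\rangle$ because $K_L u^H$ carries no $y$-dependence and $K$ is constant. This is exactly \eqref{M}. Subtracting the resulting identity from \eqref{plan_master} and using $\partial_t^2 r = \partial_t^2 u - \partial_t^2 u^H$ then gives \eqref{c_d}. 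The initial conditions follow by the same averaging and differencing applied to \eqref{two_scale_limit_full_formic1}--\eqref{two_scale_limit_full_formic2}, namely $u^H(0)=\langle u_0\rangle$, $\dot u^H(0)=\langle v_0\rangle$, $r(0)=u_0-\langle u_0\rangle$, $\dot r(0)=v_0-\langle v_0\rangle$; and the regularity $u^H\in C^2([0,T];C(\overline{\Omega})^3)$ and $r\in\mathcal{Q}_p$ is inherited from $u\in\mathcal{Q}_p$ because $\langle\cdot\rangle$ is bounded.

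The computation is entirely routine; the only genuine content is the pair of structural identities $B_S u^H = 0$ and ``$K_{L,2}$ is multiplication by $K$'', together with the easy fact that $\langle\cdot\rangle$ commutes with $\partial_t^2$ and with the bounded operators $B_L$, $B_S$. I do not anticipate any serious obstacle; the main care goes into the bookkeeping needed so that the terms reassemble into precisely the form stated in \eqref{M}--\eqref{c_d}.
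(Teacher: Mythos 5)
Your proposal is correct and follows essentially the same route as the paper: substitute $u=u^H+r$ into the two-scale equation of Theorem \ref{two-scale-limit-eqn-thm}, multiply by $\rho^{-1}$, average over $Y$ using $\langle r\rangle=0$ and the commutation of $\langle\cdot\rangle$ with $\partial_t^2$ to obtain \eqref{M}, then subtract (equivalently, substitute \eqref{M} back) to obtain \eqref{c_d}, with the initial conditions read off by averaging and differencing the two-scale initial data. The only difference is that you make explicit the structural identities the paper leaves implicit, namely $B_S u^H=0$ and the identification of $K_{L,2}$ with multiplication by the constant matrix $K$ (an identification the paper's own statement also uses, and which strictly speaking ignores points $x$ within distance $\gamma$ of $\partial\Omega$ where $H_\gamma(x)\cap\Omega\neq H_\gamma(x)$).
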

\begin{proof}
We write $u(x,y,t)=u^H(x,t)+r(x,y,t)$ and substitute this into the two-scale peridynamic equation \eqref{two_scale_limit_full_form}. Next multiply both sides of \eqref{two_scale_limit_full_form} by $\rho^{-1}$
and then take the average both sides of \eqref{two_scale_limit_full_form} with respect to the $y$ variable.
The equation for $u^H$ given by \eqref{M} follows noting that $\langle r\rangle(t)=0$ and 
\begin{eqnarray}
\langle\ddot{r}\rangle(t)=\partial^2_t\langle r\rangle=0,
\label{derivavg}
\end{eqnarray}
where the operations of differentiation  and integration commute since since $u\in C^2([0,T];L^p_{\ss{per}}(Y;C(\overline{\Omega})^3))$.
The equation \eqref{c_d} follows on substitution of \eqref{M} in \eqref{two_scale_limit_full_form}.
\end{proof}

Now we obtain an evolution equation for $u^H$ by eliminating $r$ from the system given by \eqref{M} and \eqref{c_d}.
Let
\begin{eqnarray}
\mathcal{C}r(t)&=&\rho^{-1}B_S r(t)-\langle\rho^{-1}B_S r\rangle(t)-K\left(\rho^{-1} r(t)-\langle\rho^{-1} r\rangle(t)\right),
\label{C}
\end{eqnarray}
and \eqref{c_d} becomes
\begin{eqnarray}
\ddot{r}(t)&=&\mathcal{C}r(t)+\left(\rho^{-1}-\langle \rho^{-1}\rangle\right)K_L u^H(t)+\rho^{-1}b(t)-\langle\rho^{-1} b\rangle(t).
\label{CMm}
\end{eqnarray}
Since \eqref{CMm} is linear we set $r=v+w$ where
\begin{eqnarray}
\ddot{v}(t)=\mathcal{C}v(t)+\left(\rho^{-1}-\langle \rho^{-1}\rangle\right)K_L u^H(t),
\label{veq}
\end{eqnarray}
with initial conditions $v(0)=0$, $\dot{v}(0)=0$ and 
\begin{eqnarray}
\ddot{w}(t)=\mathcal{C}w(t)+\rho^{-1} b(t)-\langle \rho^{-1}b\rangle(t),
\label{weq}
\end{eqnarray}
with initial conditions $w(0)=\hat{u}_0=u_0-\langle u_0\rangle$ and $\dot{w}(0)=\hat{v}_0=v_0-\langle v_0\rangle$.

Proceeding as before one finds that $\mathcal{C}$ is a linear operator on $L^p_{\ss{per}}(Y;C(\overline{\Omega})^3)$
and $v(t)$ and $w(t)$ are given by
\begin{eqnarray}
v(t)&=&\left(\sqrt{\mathcal{C}}\right)^{-1}\int_0^t\sinh{\left((t-\tau)\sqrt{\mathcal{C}}\right)}\left(\rho^{-1}-\langle\rho^{-1}\rangle\right)K_L u^H(\tau)\,d\tau\label{iv}\\
w(t)&=&\cosh{t\sqrt{\mathcal{C}}}\hat{u}_0+\left(\sqrt{\mathcal{C}}\right)^{-1}\sinh{t\sqrt{\mathcal{C}}}\hat{v}_0\nonumber\\
&+&\left(\sqrt{\mathcal{C}}\right)^{-1}\int_0^t\sinh{\left((t-\tau)\sqrt{\mathcal{C}}\right)}\left(\rho^{-1}b(\tau)-\langle\rho^{-1} b\rangle(\tau)\right)\,d\tau.
\label{iw}
\end{eqnarray}
Let
\begin{eqnarray}
\mathcal{K}=\langle\rho^{-1}B_S r\rangle(t) - K\langle \rho^{-1 }b\rangle(t),
\label{Kk}
\end{eqnarray}
then substitution of $r=v+w$ in \eqref{M} gives the homogenized integro-differential equation for $u^H(t)$ given by
the following theorem.
\begin{theorem}
The homogenized deformation $u^H(t)$ is the solution of the integro-differential equation in space and time given by 
\begin{eqnarray}
\langle\rho^{-1}\rangle^{-1}\ddot{u}^H(t)&=&K_L u^H(t)+
\langle\rho^{-1}\rangle^{-1}\mathcal{K}\left(\sqrt{\mathcal{C}}\right)^{-1}\int_0^t\sinh{\left((t-\tau)\sqrt{\mathcal{C}}\right)}\left(\rho^{-1}-\langle\rho^{-1}\rangle\right)K_L u^H(\tau)\,d\tau\nonumber\\
&+&\langle\rho^{-1}\rangle^{-1}\left(\mathcal{K} w(t)+\langle\rho^{-1} b\rangle(t)\right),
\label{id_u_H}
\end{eqnarray}
with the initial conditions $u^H(0)=\langle u_0\rangle$ and $\dot{u}^H(0)=\langle v_0\rangle$.
The force generated by the homogenized deformation $f^H(t)=f^H(\cdot,t)$ is given by the history dependent constitutive law
\begin{eqnarray}
f^H(t)=K_L u^H(t)+
\langle\rho^{-1}\rangle^{-1}\mathcal{K}\left(\sqrt{\mathcal{C}}\right)^{-1}\int_0^t\sinh{\left((t-\tau)\sqrt{\mathcal{C}}\right)}\left(\rho^{-1}-\langle\rho^{-1}\rangle\right)K_L u^H(\tau)\,d\tau.
\label{hc}
\end{eqnarray}
\end{theorem}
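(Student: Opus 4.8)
The strategy is to eliminate the corrector $r$ from the coupled system of Theorem~\ref{couple_dynam} and substitute the resulting expression back into the macroscopic balance \eqref{M}. First I would recast the $r$--equation \eqref{c_d} in the compact form \eqref{CMm} by means of the operator $\mathcal{C}$ of \eqref{C}. As in the proof of Proposition~\ref{existuniq_prop}, one checks that $\rho^{-1}B_S$ is bounded on $L^p_{\ss{per}}(Y;C(\overline{\Omega})^3)$ (this is Lemma~\ref{two_scale_lemma}(b)), that $K$ in \eqref{K} is a fixed constant matrix, and that the $y$--averaging map $v\mapsto\langle v\rangle$ is bounded on the same space; hence $\mathcal{C}$ is a bounded linear operator on $L^p_{\ss{per}}(Y;C(\overline{\Omega})^3)$ for $\frac{3}{2}<p<\infty$. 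Boundedness is the essential point: it makes the power--series operators $\cosh(t\sqrt{\mathcal{C}})$ and $(\sqrt{\mathcal{C}})^{-1}\sinh(t\sqrt{\mathcal{C}})$, of exactly the type already used in \eqref{ue_explicit}, well defined entire operator--valued functions of $t$ \emph{without} any self-adjointness or sign hypothesis on $\mathcal{C}$, and it lets the theory of semigroups apply to the second--order inhomogeneous abstract Cauchy problem \eqref{CMm}. Writing $r=v+w$ as in \eqref{veq}--\eqref{weq} is then legitimate by linearity and uniqueness, since $v+w$ solves \eqref{CMm} with the correct initial data, and Duhamel's formula yields the explicit representations \eqref{iv} and \eqref{iw}.

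Next I would substitute $r=v+w$ into \eqref{M}. Collecting the two $r$--dependent terms $\langle\rho^{-1}B_S r\rangle(t)-K\langle\rho^{-1}r\rangle(t)$ into the bounded linear operator $\mathcal{K}$ of \eqref{Kk}, equation \eqref{M} reads $\ddot{u}^H(t)=\langle\rho^{-1}\rangle K_L u^H(t)+\mathcal{K}(v(t)+w(t))+\langle\rho^{-1}b\rangle(t)$. Since $\mathcal{K}$ is bounded it commutes with the Bochner time integral, so applying it to \eqref{iv} gives $\mathcal{K}v(t)=\mathcal{K}(\sqrt{\mathcal{C}})^{-1}\int_0^t\sinh((t-\tau)\sqrt{\mathcal{C}})(\rho^{-1}-\langle\rho^{-1}\rangle)K_L u^H(\tau)\,d\tau$, while $\mathcal{K}w(t)$ absorbs the contributions of the body force and of the fluctuating initial data through \eqref{iw}. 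Dividing the identity by the strictly positive scalar $\langle\rho^{-1}\rangle$ ($\rho$ is bounded above and below by positive constants) and rearranging produces \eqref{id_u_H}; the initial conditions $u^H(0)=\langle u_0\rangle$, $\dot{u}^H(0)=\langle v_0\rangle$ are inherited from Theorem~\ref{couple_dynam}.

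Finally, \eqref{hc} is read off from \eqref{id_u_H} by identifying the homogenized internal force $f^H(t)$ with the part of $\langle\rho^{-1}\rangle^{-1}\ddot{u}^H(t)$ generated by the homogenized deformation itself, namely $f^H(t):=\langle\rho^{-1}\rangle^{-1}\ddot{u}^H(t)-\langle\rho^{-1}\rangle^{-1}(\mathcal{K}w(t)+\langle\rho^{-1}b\rangle(t))$; the two surviving terms are precisely the right-hand side of \eqref{hc}, and the convolution against $\sinh((t-\tau)\sqrt{\mathcal{C}})$ exhibits the advertised history dependence of the constitutive law. I expect the only nonroutine step to be the operator--theoretic bookkeeping of the first two paragraphs: verifying that $\mathcal{C}$ and $\mathcal{K}$ are bounded on the correct Banach space so that the $\cosh/\sinh$ functional calculus of \eqref{ue_explicit} is valid for a non--self-adjoint, non--sign-definite operator, and that $\mathcal{K}$ may be pulled through the $\tau$--integral in \eqref{iv}; everything afterwards is substitution of \eqref{iv}--\eqref{iw} into \eqref{M} and collecting like terms.
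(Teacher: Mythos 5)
Your proposal is correct and follows essentially the same route as the paper: rewrite \eqref{c_d} as \eqref{CMm} with the bounded operator $\mathcal{C}$, split $r=v+w$ with Duhamel representations \eqref{iv}--\eqref{iw} (the $\cosh$/$\sinh$ power-series calculus of \eqref{ue_explicit}), substitute into \eqref{M}, and multiply through by $\langle\rho^{-1}\rangle^{-1}$ to read off \eqref{id_u_H} and \eqref{hc}. You also correctly read $\mathcal{K}$ as the operator collecting the $r$-dependent terms $\langle\rho^{-1}B_S r\rangle-K\langle\rho^{-1}r\rangle$, which is what the paper intends in \eqref{Kk}.
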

This equation shows that the evolution law for the homogenized deformation $u^H$ is history dependent.

\end{document}